\documentclass{amsart}

\usepackage{amsmath,amssymb,amsfonts,amsthm}
\usepackage[all]{xy}
\usepackage{enumerate}
\usepackage{tikz}
\usepackage{array}
\usepackage{mathrsfs}
\usepackage{csquotes}
\usepackage{fullpage}
\usepackage{hyperref}
\usepackage{accents}
\usepackage{mathdots}

\theoremstyle{theorem}
\newtheorem{thm}{Theorem}[section]
\newtheorem{prop}[thm]{Proposition}
\newtheorem{lem}[thm]{Lemma}
\newtheorem{coro}[thm]{Corollary}

\theoremstyle{remark}
\newtheorem{rem}[thm]{Remark}
\newtheorem{ex}[thm]{Example}

\theoremstyle{definition}
\newtheorem{defi}[thm]{Definition}


\newcommand{\Q}{\mathbb{Q}}
\newcommand{\C}{\mathbb{C}}
\newcommand{\td}[1]{\widetilde{#1}}
\newcommand{\Z}{\mathcal{Z}}
\newcommand{\T}{\mathcal{T}}

\newcommand{\dR}{\mathrm{dR}}
\newcommand{\B}{\mathrm{B}}
\renewcommand{\leq}{\leqslant}
\renewcommand{\geq}{\geqslant}
\newcommand{\eulerian}{\genfrac{\langle}{\rangle}{0pt}{}}

\title{Odd zeta motive and linear forms in odd zeta values}
\author{Cl\'{e}ment Dupont, with a joint appendix with Don Zagier}
\email{clement.dupont@umontpellier.fr \\ dbz@mpim-bonn.mpg.de}
\address{Institut Montpelli\'{e}rain Alexander Grothendieck, CNRS, Univ. Montpellier, France \;\;\; and \;\;\; 
Max-Planck-Institut f\"{u}r Mathematik\\Vivatsgasse, 7 \\53111 Bonn, Germany}


\begin{document}

\begin{abstract}
We study a family of mixed Tate motives over~$\mathbb{Z}$ whose periods are linear forms in the zeta values~$\zeta(n)$. They naturally include the Beukers--Rhin--Viola integrals for~$\zeta(2)$ and the Ball--Rivoal linear forms in odd zeta values. We give a general integral formula for the coefficients of the linear forms and a geometric interpretation of the vanishing of the coefficients of a given parity. The main underlying result is a geometric construction of a minimal ind-object in the category of mixed Tate motives over~$\mathbb{Z}$ which contains all the non-trivial extensions between simple objects. In a joint appendix with Don Zagier, we prove the compatibility between the structure of the motives considered here and the representations of their periods as sums of series.
\end{abstract}

\maketitle

\section{Introduction}

	\subsection{Constructing linear forms in zeta values}
	
		The study of the values at integers~$n\geq 2$ of the Riemann zeta function	
		$$\zeta(n)=\sum_{k\geq 1}\frac{1}{k^n}$$
		goes back to Euler, who showed that the even zeta value~$\zeta(2n)$ is a rational multiple of~$\pi^{2n}$. Lindemann's theorem thus implies that the even zeta values are transcendental numbers. It is conjectured that the odd zeta values~$\zeta(3)$,~$\zeta(5)$,~$\zeta(7),\ldots$ are algebraically independent over~$\Q[\pi]$. 

		Many of the results in the direction of this conjecture use as a key ingredient certain families of period integrals which evaluate to linear combinations of~$1$ and zeta values:
		\begin{equation}\label{eq: linear form intro}
		\int_\sigma\omega \, = \, a_0+a_2\zeta(2)+\cdots+a_n\zeta(n)\ ,
		\end{equation}
		with~$a_k\in\Q$ for every~$k$. We can cite in particular the following results (see Fischler's Bourbaki talk~\cite{fischlerbourbaki} for a more complete survey).
		\begin{enumerate}[--]
		\item Ap\'{e}ry's proof~\cite{apery} of the irrationality of~$\zeta(2)$ and~$\zeta(3)$ was simplified by Beukers~\cite{beukersapery} by using a family of integrals evaluating to linear combinations~$a_0+a_2\zeta(2)$ and~$a_0+a_3\zeta(3)$.
		\item Ball and Rivoal's proof~\cite{rivoalcras,ballrivoal} that infinitely many odd zeta values are irrational relies on a family of integrals evaluating to linear combinations (\ref{eq: linear form intro}) for which all the even coefficients~$a_2,a_4,a_6,\ldots$ vanish.
		\item Rhin and Viola's irrationality measures~\cite{rhinviolazeta2,rhinviolazeta3} for~$\zeta(2)$ and~$\zeta(3)$ are built on generalizations of the Beukers integrals and precise estimates for the coefficients~$a_2$ and~$a_3$.
		\end{enumerate}		
		In view of diophantine applications, it is crucial to have some control over the coefficients $a_k$ appearing in linear combinations (\ref{eq: linear form intro}), in particular to be able to predict the vanishing of certain coefficients.\\ 
	
		In the present article, we study the family of integrals
		\begin{equation}\label{eq: integral intro}
		\int_{[0,1]^n}\omega \;\;\;\;\; \textnormal{ with } \;\;\;\;\;\omega=\dfrac{P(x_1,\ldots,x_n)}{(1-x_1\cdots x_n)^{N}}\,dx_1\cdots dx_n \ ,
		\end{equation}
		where~$n\geq 1$ and~$N\geq 0$ are integers and~$P(x_1,\ldots,x_n)$ is a polynomial with rational coefficients. This family contains the Beukers--Rhin--Viola integrals for~$\zeta(2)$ and the Ball--Rivoal integrals. We say that an algebraic differential form~$\omega$ as in (\ref{eq: integral intro}) is \emph{integrable} if the integral in (\ref{eq: integral intro}) is absolutely convergent. Our first result is that such integrals evaluate to linear combinations of~$1$ and zeta values, with an integral formula for the coefficients.
	
		\begin{thm}\label{thm: coefficients intro}
		There exists a family~$(\sigma_2,\ldots,\sigma_n)$ of relative~$n$-cycles with rational coefficients in~$(\C^*)^n-\{x_1\cdots x_n=1\}$ such that for every integrable~$\omega$ we have
		$$\int_{[0,1]^n}\omega=a_0(\omega)+a_2(\omega)\zeta(2)+\cdots+a_n(\omega)\zeta(n)\ ,$$
		with~$a_k(\omega)$ a rational number for every~$k$, given for~$k=2,\ldots,n$ by the formula
		\begin{equation}\label{eq: coefficients a sigma intro}
		a_k(\omega)=(2\pi i)^{-k}\int_{\sigma_k}\omega \ .
		\end{equation}		
		\end{thm}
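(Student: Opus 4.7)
The plan is to realize the integral~(\ref{eq: integral intro}) as a period of a mixed Tate motive over~$\mathbb{Z}$ and to extract the coefficients~$a_k$ by pairing~$\omega$ against a universal family of dual cycles.

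First, I would interpret the integral as a period pairing. The form~$\omega$ is regular on $(\C^*)^n\setminus V$, where $V=\{x_1\cdots x_n=1\}$, while the chain $[0,1]^n$ has its boundary on the coordinate divisors. After a sequence of blow-ups along the loci where~$V$ meets $\partial[0,1]^n$ (principally the vertex $(1,\ldots,1)$), one obtains a smooth pair $(X,A\cup B)$ with $A\cup B$ of normal crossings, where~$A$ contains the strict transform of~$V$ together with the exceptional divisors that meet the chain, and~$B$ contains the strict transform of $\partial[0,1]^n$. The integral then becomes the period pairing $\langle[\omega],[[0,1]^n]\rangle$ for $[\omega]\in H^n_\dR(X\setminus A,\,B\setminus(A\cap B))$. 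Because the geometry is toric and all relevant divisors are monomial, the associated motive~$M(\omega)$ is mixed Tate over~$\mathbb{Z}$.

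Second, I would identify the relevant sub-quotient of~$M(\omega)$ with a piece of the ``odd zeta motive'' constructed later in the paper, which by design is a minimal ind-object of mixed Tate motives over~$\mathbb{Z}$ receiving all non-trivial extensions $\mathrm{Ext}^1(\Q(0),\Q(k))$. In this picture one exhibits a de Rham basis $\omega_0,\omega_2,\omega_3,\ldots,\omega_n$ with $\int_{[0,1]^n}\omega_0=1$ and $\int_{[0,1]^n}\omega_k=\zeta(k)$ for $2\leq k\leq n$, and the cycles~$\sigma_k$ are the Betti-side dual basis representing the $\Q(-k)$-pieces. Concretely, each $\sigma_k$ should arise as a $\Q$-linear combination of products of real chains with small tori linking~$V$, so that by the residue theorem $\int_{\sigma_k}\omega_j=(2\pi i)^k\delta_{jk}$. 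Expanding $\omega=a_0\omega_0+\sum_{k=2}^n a_k\omega_k$ modulo exact forms and pairing against~$\sigma_k$ gives (\ref{eq: coefficients a sigma intro}), with the~$a_k$ rational by the~$\Q$-structure on both sides.

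The main obstacle is producing the cycles~$\sigma_k$ in $(\C^*)^n\setminus V$ itself, rather than upstairs on the blow-up~$X$, and showing that they do not depend on~$\omega$. This requires constructing explicit topological cycles downstairs whose pushforwards to~$X$ match the motivic dual basis, and verifying that the blow-ups needed for varying~$\omega$ can be handled uniformly so that no residues along exceptional divisors interfere with the pairing. Once this is arranged, rationality of the coefficients~$a_k$ follows formally from the fact that both the de Rham and Betti realisations of the odd zeta motive are defined over~$\Q$.
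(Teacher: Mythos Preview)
Your outline has the right shape --- interpret the integral as a period of a relative cohomology group, find a de Rham basis~$(v_0,v_2,\ldots,v_n)$ with~$\int_{[0,1]^n}v_k=\zeta(k)$, and read off the~$a_k$ by pairing with a dual Betti basis --- and this is indeed what the paper does. But two points deserve correction, and one of them is the actual heart of the argument.

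First, a clarification: the motive does not depend on~$\omega$. A single blow-up at the point~$(1,\ldots,1)$ produces a normal crossings configuration, and the resulting object~$\Z^{(n)}=H^n(\td{X}_n-\td{A}_n,(\td{B}_n\cup E_n)-\cdots)$ receives the class of \emph{every} integrable~$\omega$ (the integrability condition is exactly what guarantees~$\pi_n^*\omega$ has no pole on the exceptional divisor). So your worry about ``blow-ups needed for varying~$\omega$'' does not arise, and~$M(\omega)$ should just be~$\Z^{(n)}$. Likewise, the odd zeta motive is a \emph{subobject} of~$\Z^{(n)}$ constructed after the fact; it plays no role in the proof of this theorem.

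Second, and more seriously: you assert the existence of dual cycles~$\sigma_k$ with~$\int_{\sigma_k}\omega_j=(2\pi i)^k\delta_{jk}$, but this is precisely the content to be proved, and your sketch (``products of real chains with small tori linking~$V$'') does not explain why the pairing is diagonal. The issue is that~$\Z^{(n)}$ sits in a short exact sequence~$0\to\Q(0)\to\Z^{(n)}\to\T^{(n-1)}(-1)\to 0$ coming from the Gysin sequence for the pair~$(X_n,A_n)$, and one needs to know that the quotient~$\T^{(n-1)}=H^{n-1}(A_n,B_n\cap A_n)$ is \emph{semisimple}, i.e.\ splits as~$\Q(-1)\oplus\cdots\oplus\Q(-(n-1))$. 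A priori this quotient could carry nontrivial extensions among its weight-graded pieces, which would produce off-diagonal entries in the period matrix and spoil the formula for~$a_k(\omega)$. The paper establishes semisimplicity by exploiting the involution~$\tau:(x_i)\mapsto(x_i^{-1})$: it acts by~$(-1)^k$ on the weight-$2k$ piece of~$\T^{(n-1)}$, so the~$\tau$-eigenspace decomposition separates consecutive weights, and then the vanishing of~$\mathrm{Ext}^1_{\mathsf{MT}(\Q)}(\Q(-2m),\Q(0))$ for~$m\geq 1$ kills the remaining possible extensions. Without this step the argument does not go through.
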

		
		The case~$n=k=2$ of this theorem is Rhin and Viola's contour formula for~$\zeta(2)$~\cite[Lemma 2.6]{rhinviolazeta2}. We note that in Theorem~\ref{thm: coefficients intro}, the relative homology classes of the~$n$-cycles~$\sigma_k$ are uniquely determined, see Theorem~\ref{thm: period matrix Z} for a precise statement. Furthermore, they are invariant, up to a sign, by the involution
		\begin{equation}\label{eq: involution intro}
		\tau:(x_1,\ldots,x_n)\mapsto (x_1^{-1},\ldots,x_n^{-1})\ ,
		\end{equation}
		which implies a general vanishing theorem for the coefficients~$a_k(\omega)$, as follows.
			
		\begin{thm}\label{thm: vanishing intro}
		For~$k=2,\ldots,n$ the relative cycle~$\tau.\sigma_k$ is homologous to~$(-1)^{k-1}\sigma_k$. Thus, for every integrable~$\omega$:
		\begin{enumerate}
		\item if~$\tau.\,\omega=\omega$ then~$a_k(\omega)=0$ for~$k\neq 0$ even;
		\item if~$\tau.\,\omega=-\omega$ then~$a_k(\omega)=0$ for~$k$ odd.
		\end{enumerate}
		\end{thm}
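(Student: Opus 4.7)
The second assertion follows from the first by change of variables: assuming $\tau.\sigma_k$ is homologous to $(-1)^{k-1}\sigma_k$,
$$\int_{\sigma_k}\tau.\omega \;=\; \int_{\tau.\sigma_k}\omega \;=\; (-1)^{k-1}\int_{\sigma_k}\omega\,,$$
so if $\tau.\omega=\epsilon\omega$ for $\epsilon\in\{\pm1\}$ then $\epsilon\int_{\sigma_k}\omega = (-1)^{k-1}\int_{\sigma_k}\omega$. This forces $a_k(\omega)=0$ whenever $\epsilon\neq(-1)^{k-1}$, which is exactly cases~(1) and~(2).

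The remaining content is the homology identity $\tau.[\sigma_k]=(-1)^{k-1}[\sigma_k]$. My plan is to exploit the uniqueness of the class $[\sigma_k]$ provided by Theorem~\ref{thm: period matrix Z}. Since $\tau$ preserves all the relevant geometry (the torus $(\C^*)^n$, the hypersurface $\{x_1\cdots x_n=1\}$, and, after a direct check, the normal crossing boundary divisor used to construct $\Z$), it induces a motivic automorphism of $\Z$ commuting with the weight filtration. The graded pieces $\gr^W_{2k}\Z$ are sums of Tate objects, so $\tau$ acts on each of them by a scalar $\epsilon_k\in\{\pm1\}$. Hence $\tau.[\sigma_k]-\epsilon_k[\sigma_k]\in W_{2k-2}\Z_\B$; the uniqueness of $[\sigma_k]$ (combined with $\tau^2=\mathrm{id}$, which forces any lower-weight correction to sit in the $-\epsilon_k$-eigenspace of $\tau$ on $W_{2k-2}$) rules out this correction and promotes the congruence to an equality in homology.

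It remains to identify $\epsilon_k=(-1)^{k-1}$, which I would do on the de Rham side by computing $\tau^*$ on the canonical form representing a generator of $\gr^W_{2k}\Z_\dR$---the top form whose integral over $[0,1]^n$ yields $\zeta(k)$. The basic ingredients are the Jacobian $\tau^*(dx_1\cdots dx_n)=(-1)^n(x_1\cdots x_n)^{-2}\,dx_1\cdots dx_n$ and the pole transformation $\tau^*(1-x_1\cdots x_n)=-(1-x_1\cdots x_n)/(x_1\cdots x_n)$; modulo lower weights these combine to $(-1)^{k-1}$. A sanity check is the case $n=k=2$, which reproduces the sign in Rhin--Viola's formula. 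The main obstacle is precisely this sign calculation: the abstract framework above forces $\tau$ to act by a sign on the weight-$k$ piece, but pinpointing the specific value $(-1)^{k-1}$ requires a careful accounting of the Jacobian, the pole transformation, and the boundary orientations used in the construction of $\Z$.
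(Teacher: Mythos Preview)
Your deduction of (1) and (2) from the homology identity is fine, and your instinct to compute the sign on the de Rham side is correct. But there is a real gap in the setup: $\tau$ does \emph{not} preserve the boundary divisor used to define $\Z^{(n)}$. The divisor $B_n$ contains the coordinate hyperplanes $\{x_i=0\}$, which $\tau$ sends to infinity; so $\tau$ does not give a motivic automorphism of $\Z^{(n)}$, and your uniqueness argument cannot be run there. This is not a technicality you can patch by a ``direct check'': the action of $\tau$ simply does not exist on the full object.

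The paper avoids this by passing to the quotient. By the Gysin sequence one has $\Z^{(n)}/\Q(0)\cong \T^{(n-1)}(-1)$ with $\T^{(n-1)}=H^{n-1}(A_n,B_n\cap A_n)$; since $A_n=\{x_1\cdots x_n=1\}$ lies in the torus, $\tau$ honestly acts on $\T^{(n-1)}$. The classes $\varphi_k$ are by construction $p^{(n),\vee}_\B(\psi_{k-1})$ with $\psi_{k-1}\in\T^{(n-1),\vee}_\B$, so the homology identity reduces to $\tau.\psi_{k-1}=(-1)^{k-1}\psi_{k-1}$. This in turn follows from Proposition~\ref{prop: computation T}: the sign is computed not on the Eulerian forms but on the top form of the torus, via $\tau^*\mathrm{dlog}(x)=-\mathrm{dlog}(x)$, giving $\tau=(-1)^m$ on $H^m(T^m)\cong\gr^W_{2m}\T^{(n-1)}$; the Tate twist $(-1)$ shifts the index so that the summand $\Q(-k)$ of $\Z^{(n)}/\Q(0)$ carries the sign $(-1)^{k-1}$. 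No ``lower-weight correction'' argument is needed, because Proposition~\ref{prop: computation T} shows $\T^{(n-1)}$ is semi-simple, so the graded sign is already the exact eigenvalue. Your ``main obstacle'' thus dissolves once you compute on the right object.
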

		
		This allows us to construct families of integrals (\ref{eq: integral intro}) which evaluate to linear combinations of~$1$ and odd zeta values, or~$1$ and even zeta values. This is the case for the integrals (see Corollary~\ref{coro: ballrivoal})
		$$\int_{[0,1]^n}\dfrac{x_1^{u_1-1}\cdots x_n^{u_n-1}(1-x_1)^{v_1-1}\cdots (1-x_n)^{v_n-1}}{(1-x_1\cdots x_n)^{N}}\,dx_1\cdots dx_n$$
		where the integers~$u_i,v_i\geq 1$ satisfy~$2u_i+v_i=N+1$ for every~$i$. Depending on the parity of the product~$(n+1)(N+1)$, the differential form is invariant or anti-invariant by~$\tau$ and we get the vanishing of even or odd coefficients. This gives a geometric interpretation of the vanishing of the coefficients in the Ball--Rivoal integrals~\cite{rivoalcras,ballrivoal}, which correspond to special values of the parameters~$u_i,v_i$.\\
		
		The fact that the vanishing of certain coefficients in the Ball--Rivoal integrals could be explained by the existence of (anti-)invariant relative cycles was suggested to me by Rivoal during a visit at Institut Fourier, Grenoble, in October 2015. The special role played by the involution~$\tau$ was first remarked by Deligne in a letter to Rivoal~\cite{deligneletterrivoal}.\\
		
		In an appendix written jointly with Don Zagier, we give an interpretation of the coefficients~$a_k(\omega)$ appearing in Theorem~\ref{thm: coefficients intro} in elementary terms, that is in terms of the natural representations of the integrals in (\ref{eq: integral intro}) as sums of series. This should be viewed as a geometric version of the dictionary between integrals and sums of series which is used in~\cite{rivoalcras,ballrivoal}. It also gives an elementary proof of the vanishing properties of Theorem~\ref{thm: vanishing intro}, which is essentially already present in the literature, see e.g.~\cite{rivoalcras,ballrivoal},~\cite[\S 8]{zudilinoddzetavalues} and~\cite[\S 3.1]{cressonfischlerrivoalseries}.\\
		
		The existence of the integral formulas (\ref{eq: coefficients a sigma intro}) follows from the computation of certain motives, which are the central objects of the present article and that we now describe.
		
	\subsection{Constructing extensions in mixed Tate motives}
	
		Recall that the category~$\mathsf{MT}(\mathbb{Z})$ of mixed Tate motives over~$\mathbb{Z}$ is a (neutral)~$\Q$-linear tannakian category defined in~\cite{delignegoncharov} and whose abstract structure is well understood. The only simple objects in~$\mathsf{MT}(\mathbb{Z})$ are the pure Tate objects~$\Q(-k)$, for~$k$ an integer, and every object in~$\mathsf{MT}(\mathbb{Z})$ has a canonical weight filtration whose graded quotients are sums of pure Tate objects. The only non-zero extension groups between the pure Tate objects are given by 
		\begin{equation}\label{eq: extensions intro}
		\mathrm{Ext}^1_{\mathsf{MT}(\mathbb{Z})}(\Q(-(2n+1)),\Q(0)) \cong \Q \hspace{1cm} (n\geq 1)\ . 
		\end{equation}
		Furthermore, a period matrix of the (essentially unique) non-trivial extension of~$\Q(-(2n+1))$ by~$\Q(0)$ has the form
		$$ \left( \begin{array}{cc}
		1 & \zeta(2n+1) \\
		0 & (2\pi i)^{2n+1}
		\end{array} \right).$$
		The difficulty of constructing linear combinations (\ref{eq: linear form intro}) with many vanishing coefficients reflects the difficulty of constructing objects of~$\mathsf{MT}(\mathbb{Z})$ with many vanishing weight-graded quotients~\cite[\S 1.4]{browndinnerparties}. In particular, the difficulty of constructing linear combinations involving only~$1$ and~$\zeta(2n+1)$ reflects the difficulty of giving a geometric construction of the extensions (\ref{eq: extensions intro}).\\
		
		In this article, we construct a minimal ind-object~$\Z^{\mathrm{odd}}$ in the category~$\mathsf{MT}(\mathbb{Z})$ which contains all the non-trivial extensions (\ref{eq: extensions intro}). The construction goes as follows. We first define, for every integer~$n$, an object~$\Z^{(n)}\in\mathsf{MT}(\mathbb{Z})$ whose periods naturally include all the integrals (\ref{eq: integral intro}). More precisely, any integrable form~$\omega$ defines a class in the de Rham realization~$\Z^{(n)}_\dR$, and the unit~$n$-cube~$[0,1]^n$ defines a class in the dual of the Betti realization~$\Z^{(n),\vee}_\B$, the pairing between these classes being the integral (\ref{eq: integral intro}). The technical heart of this article is the computation of the full period matrix of~$\Z^{(n)}$.
				
		\begin{thm}\label{thm: period matrix Z intro}
		We have a short exact sequence
		$$0\rightarrow \Q(0) \rightarrow \Z^{(n)} \rightarrow \Q(-2)\oplus\cdots\oplus\Q(-n)\rightarrow 0$$
		and~$\Z^{(n)}$ has the following period matrix which is compatible with this short exact sequence:
		$$\left( \begin{array}{ccccccc}
		1 & \zeta(2) & \zeta(3) & \cdots & \cdots & \zeta(n-1) & \zeta(n) \\
		&(2\pi i)^2 &  &  &  &  & \\
		 && (2\pi i)^3 &  &  & 0 & \\
		 &&  & \ddots &  &  & \\
		 &&  & & \ddots &  & \\
		 &0&  &  &  & (2\pi i)^{n-1} & \\
		 &&  &  &  &  & (2\pi i)^n 
		\end{array} \right)\cdot$$
		\end{thm}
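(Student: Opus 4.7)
The plan is to realize $\Z^{(n)}$ geometrically as the motivic cohomology of an explicit pair, and then extract both the short exact sequence and the period matrix from a combination of a localization long exact sequence and explicit integration. Starting from the open variety $\A^n\setminus V$ with $V=\{x_1\cdots x_n=1\}$ together with the boundary $B=\bigcup_{i=1}^n(\{x_i=0\}\cup\{x_i=1\})$, I would blow up the loci where $V$ meets $B$ until the whole configuration becomes a normal crossing divisor inside a smooth variety $Y_n$, and define $\Z^{(n)}:=H^n(Y_n\setminus\td{V},\td{B})$ in the motivic sense, where $\td V$ and $\td B$ are the strict transforms. Because every stratum of the configuration is a product of $\mathbb{G}_m$'s and points over $\mathbb{Z}$ with good reduction, the result lies in $\mathsf{MT}(\mathbb{Z})$.

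The short exact sequence would come from the Gysin/localization sequence for $\td V$ (together with its preimages under the blow-ups) inside $Y_n$. The complement retracts onto $[0,1]^n$ and contributes the canonical sub $\Q(0)\subset\Z^{(n)}$ containing the class of the cube. The quotient is produced by the residues along the components of $\td V$ and of the exceptional divisors; a combinatorial bookkeeping of these residues, together with the fact that the relevant boundary strata are products of tori, should yield Tate twists in weights $2,3,\ldots,n$ and, crucially, no $\Q(-1)$.

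For the period matrix I would exhibit compatible Betti and de Rham bases. On the de Rham side, the class of~$1$ lifts the distinguished $\Q(0)$, and for each $k=2,\ldots,n$ a carefully chosen integrable form $\omega_k$ of the type (\ref{eq: integral intro}) represents a generator of the $\Q(-k)^\vee$ piece, arranged so that $\int_{[0,1]^n}\omega_k=\zeta(k)$. On the Betti side, the unit cube is dual to the distinguished $\Q(0)$, and the cycles $\sigma_k$ are built as products of a real chain and $(n-k)$ small circles around components of $\td B$ adjacent to $\td V$. The first-row entries reduce to the classical identity
$$\int_{[0,1]^k}\frac{dx_1\cdots dx_k}{1-x_1\cdots x_k}=\zeta(k)\qquad(k\geq 2),$$
proved by expanding $1/(1-t)$ as a geometric series and integrating termwise. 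The diagonal entries $(2\pi i)^k$ are iterated residue computations, and the remaining lower-triangular entries vanish because $\sigma_k$ is arranged to miss the polar loci of $\omega_\ell$ for $\ell\neq k$.

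The main obstacle is controlling the weight filtration, in particular showing that no $\Q(-1)$ contribution survives: a naive Gysin analysis is likely too coarse, since each codimension-one component of $B$ contributes a potential $\Q(-1)$ that must be shown to cancel in the end. A probably cleaner route, and the one I would actually try first, is induction on $n$: projecting away the last coordinate $x_n$ should exhibit $\Z^{(n)}$ as an extension built from $\Z^{(n-1)}$ and $\Q(-n)$, and the inductive hypothesis then propagates both the weight structure and the explicit period computations one dimension at a time.
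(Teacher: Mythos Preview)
Your overall architecture matches the paper's: define $\Z^{(n)}$ via a blow-up of $\A^n$, use the Gysin sequence for the smooth divisor $A_n=\{x_1\cdots x_n=1\}$ to get the short exact sequence, and identify the sub $\Q(0)$ with $H^n(X_n,B_n)$ and the quotient with $H^{n-1}(A_n,B_n\cap A_n)(-1)=:\T^{(n-1)}(-1)$. Two small corrections: the paper only blows up the single point $(1,\ldots,1)$, and $H^\bullet(X_n,B_n)\cong\Q(0)$ follows immediately from K\"unneth, so your worry about stray $\Q(-1)$'s from components of $B$ is not where the difficulty lies.

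The real gap is the step you call ``combinatorial bookkeeping'': proving that the quotient $\T^{(n-1)}(-1)$ is \emph{semi-simple}, i.e.\ actually splits as $\Q(-2)\oplus\cdots\oplus\Q(-n)$. Your proposed induction via $0\to\Z^{(n-1)}\to\Z^{(n)}\to\Q(-n)\to 0$ is in the paper, but it does not by itself give this splitting: one would need the extension of $\Q(-n)$ by $\Q(-2)\oplus\cdots\oplus\Q(-(n-1))$ to be trivial, yet $\mathrm{Ext}^1_{\mathsf{MT}(\Q)}(\Q(-n),\Q(-k))$ is nonzero whenever $n-k$ is odd. The paper's decisive extra ingredient, absent from your proposal, is the involution $\tau:(x_i)\mapsto(x_i^{-1})$: it acts on $\gr^W_{2k}\T^{(n-1)}$ by $(-1)^k$, so after decomposing into $\tau$-eigenspaces only extensions with $n-k$ even survive, and those vanish in $\mathsf{MT}(\Q)$. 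Without $\tau$ (or an equivalent device) the induction stalls.

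A smaller point on the de Rham side: the forms $\frac{dx_1\cdots dx_k}{1-x_1\cdots x_k}$ live on $X_k$, not $X_n$, so you still need lifts to $\Z^{(n)}_\dR$ in pure weight $2k$ for every $k\leq n$. The paper manufactures these as the \emph{Eulerian forms} $\omega_k^{(n)}=\dfrac{E_{n-k}(x_1\cdots x_n)}{(1-x_1\cdots x_n)^{n-k+1}}\,dx_1\cdots dx_n$, checks $\int_{[0,1]^n}\omega_k^{(n)}=\zeta(k)$, and uses the recursion for Eulerian polynomials to show these are compatible with the inductive maps $\Z^{(n-1)}\to\Z^{(n)}$ and land in the correct weight.
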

		
		Concretely, this theorem says that we can find a basis~$(v_0,v_2,\ldots,v_n)$ of the de Rham realization~$\Z^{(n)}_\dR$ (which we will compute explicitly in terms of a special family of integrable forms) and a basis~$(\varphi_0,\varphi_2,\ldots,\varphi_n)$ of the dual of the Betti realization~$\Z^{(n),\vee}_\B$, such that the matrix of the integrals~$\langle \varphi_i,v_j\rangle$ is the one given. The basis element~$\varphi_0$  is the class of the unit~$n$-cube~$[0,1]^n$. Expressing the class~$[\omega]\in\Z^{(n)}_\dR$ of an integrable form~$\omega$ in the basis~$(v_0,v_2,\ldots,v_n)$ as 
		$$[\omega]=a_0(\omega)v_0+a_2(\omega)v_2+\cdots+a_n(\omega)v_n$$
		and pairing with the dual basis of the Betti realization gives the proof of Theorem~\ref{thm: coefficients intro}, with the cycles $(\sigma_2,\ldots,\sigma_n)$ chosen as representatives of the classes~$(\varphi_2,\ldots,\varphi_n)$.\\

		The involution (\ref{eq: involution intro}) plays an important role in the proof of Theorem~\ref{thm: period matrix Z intro}. It induces a natural involution, still denoted by~$\tau$, on the quotient~$\Z^{(n)}/\Q(0)\cong \Q(-2)\oplus\cdots\oplus\Q(-n)$.
		
		\begin{thm}\label{thm: sign intro}
		For~$k=2,\ldots,n$, the involution~$\tau$ acts on the direct summand~$\Q(-k)$ of~$\Z^{(n)}/\Q(0)$ by multiplication by~$(-1)^{k-1}$.
		\end{thm}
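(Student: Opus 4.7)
The plan is to combine the canonicity of the weight filtration on mixed Tate motives with the cycle-level symmetry already recorded in Theorem~\ref{thm: vanishing intro}. First I would observe that the involution~$\tau$ of~(\ref{eq: involution intro}) is defined at the geometric level and therefore induces, by functoriality, an involution (still denoted~$\tau$) of the motive~$\Z^{(n)}\in\mathsf{MT}(\mathbb{Z})$. Since the weight filtration is canonical, $\tau$ preserves it and in particular preserves the weight-zero subobject~$\Q(0)\subseteq\Z^{(n)}$; it therefore descends to an involution~$\bar\tau$ of the quotient~$\Z^{(n)}/\Q(0)\cong\bigoplus_{k=2}^n\Q(-k)$. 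As the summands are pairwise non-isomorphic pure objects of distinct weights, $\bar\tau$ must respect this decomposition and act on each~$\Q(-k)$ by a scalar~$\epsilon_k\in\{\pm 1\}$.

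Next I would determine the signs~$\epsilon_k$ by passing to the dual Betti realization. Dualizing the short exact sequence of Theorem~\ref{thm: period matrix Z intro} gives
$$0\to\bigoplus_{k=2}^n\Q(k)\to\Z^{(n),\vee}\to\Q(0)\to 0,$$
and for~$k\geq 2$ the basis element~$\varphi_k$ lies in the subobject on the left and projects to a generator of~$\Q(k)_\B=\Q(-k)^\vee_\B$. This is certified by the period matrix of Theorem~\ref{thm: period matrix Z intro}: the vanishing of~$\langle\varphi_k,v_j\rangle$ for~$j\neq k$ places~$\varphi_k$ in the annihilator of~$\Q(0)_\dR$, while the diagonal entry~$(2\pi i)^k$ exhibits a nontrivial Betti--de Rham pairing with a lift~$v_k$ of a generator of~$\Q(-k)$. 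Since~$\varphi_k$ is represented by the relative cycle~$\sigma_k$, Theorem~\ref{thm: vanishing intro} gives~$\tau^{*}\varphi_k=(-1)^{k-1}\varphi_k$; and since the action of an involution on a one-dimensional space agrees with its action on the dual, we conclude~$\epsilon_k=(-1)^{k-1}$.

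The main obstacle is this identification step: one must verify that~$\varphi_k$ genuinely represents a generator of the~$\Q(-k)^\vee_\B$-summand, so that tracking~$\tau^{*}\varphi_k$ really computes~$\epsilon_k$ with no contamination from other weights. The upper-triangular shape of the period matrix in Theorem~\ref{thm: period matrix Z intro}, with the single nonzero off-first-row entry~$(2\pi i)^k$ in the~$k$-th column, is precisely what pins this down: it identifies~$\varphi_k$ as dual to~$v_k$ in the appropriate sense, and allows the homological relation of Theorem~\ref{thm: vanishing intro} to transport cleanly to the level of motivic direct summands.
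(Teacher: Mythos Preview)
Your argument is circular. You invoke Theorem~\ref{thm: vanishing intro} (specifically the assertion that $\tau.\sigma_k$ is homologous to $(-1)^{k-1}\sigma_k$) as an input, but in the paper's logical flow that assertion is a \emph{consequence} of the very statement you are trying to prove: the paper says explicitly, right after stating this theorem, ``This readily implies Theorem~\ref{thm: vanishing intro}.'' The relation $\tau_*\varphi_k=(-1)^{k-1}\varphi_k$ is nothing other than the Betti-dual formulation of the sign of $\tau$ on the summand $\Q(-k)$, so citing it amounts to assuming what you want to show. You would need an independent computation of $\tau_*\varphi_k$ at the level of cycles, and you have not supplied one.

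There is also a technical slip in your first paragraph: the involution $\tau:(x_1,\ldots,x_n)\mapsto(x_1^{-1},\ldots,x_n^{-1})$ is not a morphism of $X_n=\mathbb{A}^n$ (it is undefined on the coordinate hyperplanes), so it does \emph{not} induce an involution of the motive $\Z^{(n)}$ by functoriality. It only acts on the subvariety $A_n$ and on the tori, and hence---via the identification $\Z^{(n)}/\Q(0)\cong\T^{(n-1)}(-1)$ from the Gysin sequence---on the quotient. This is exactly how the paper proceeds: it computes the action of $\tau$ on $\T^{(n-1)}$ directly (Proposition~\ref{prop: computation T}), using that $\tau$ acts by $(-1)^n$ on $H^n(T^n)\cong H^1(T^1)^{\otimes n}$ together with the vanishing of $\mathrm{Ext}^1_{\mathsf{MT}(\Q)}(\Q(-2m),\Q(0))$ to split the inductive short exact sequences~(\ref{eq: short exact sequence T}) equivariantly. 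The sign $(-1)^{k-1}$ on the summand $\Q(-k)$ of $\Z^{(n)}/\Q(0)\cong\T^{(n-1)}(-1)$ then comes from the sign $(-1)^{k-1}$ on $\Q(-(k-1))\subset\T^{(n-1)}$.
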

		
		This readily implies Theorem~\ref{thm: vanishing intro}. Now if we write
		$$\Z^{(n)}/\Q(0)=(\Z^{(n)}/\Q(0))_+\oplus (\Z^{(n)}/\Q(0))_-$$
		for the decomposition into invariant and anti-invariants with respect to~$\tau$ and write~$p:\Z^{(n)}\rightarrow \Z^{(n)}/\Q(0)$ for the natural projection, we may set
		$$\Z^{(n),\mathrm{odd}}:=p^{-1}((\Z^{(n)}/\Q(0))_+)$$
		whose period matrix only contains odd zeta values in the first row. The objects~$\Z^{(n),\mathrm{odd}}\in\mathsf{MT}(\mathbb{Z})$ form an inductive system, and the limit
		$$\Z^{\mathrm{odd}}:=\lim_{\stackrel{\longrightarrow}{n}}\Z^{(n),\mathrm{odd}}$$
		has an infinite period matrix
		\begin{equation}\label{eq: period matrix Z odd intro}
		\left( \begin{array}{ccccccc}
		1 & \zeta(3) & \zeta(5) & \zeta(7) & \cdots & \cdots  & \cdots\\
		&(2\pi i)^3 &  &  &  & &   \\
		 && (2\pi i)^5 &  &  & 0  & \\
		 &&  & (2\pi i)^7 &  & &  \\
		 &&  & & \ddots & &  \\
		 &0&  &  &  & \ddots & \\
		 &&&&&&\ddots
		\end{array} \right)\cdot
		\end{equation}
		
		We call~$\Z^{\mathrm{odd}}$ the \emph{odd zeta motive}. 
		
	\subsection{Related work and open questions}
	
		This article follows the program initiated by Brown~\cite{browndinnerparties}, which aims at explaining and possibly producing irrationality proofs for zeta values by means of algebraic geometry. However, the motives that we are considering are different from the general motives considered by Brown, and in particular, easier to compute. It would be interesting to determine the precise relationship between our motives and those defined in~\cite{browndinnerparties} in terms of the moduli spaces~$\mathcal{M}_{0,n+3}$. 
		
		In another direction, an explicit description of the relative cycles defined in Theorem~\ref{thm: coefficients intro} could prove helpful in proving quantitative results on the irrationality measures of zeta values, in the spirit of~\cite{rhinviolazeta2,rhinviolazeta3}.
		
	 	It is also tempting to apply our methods to other families of integrals appearing in the literature, such as the Beukers integrals for~$\zeta(3)$ and their generalizations. One should be able, for instance, to recover Rhin and Viola's contour integrals for~$\zeta(3)$~\cite[Theorem 3.1]{rhinviolazeta3}. The symmetry properties studied by Cresson, Fischler and Rivoal~\cite{cressonfischlerrivoalsymetrie} can probably be explained geometrically via finite group actions as in the present article. The ad-hoc long exact sequences appearing here should be replaced by more systematic tools such as the Orlik--Solomon bi-complexes from~\cite{dupontbiarrangements}.
				
		Finally, it should be possible to extend our results to a functional version of the periods (\ref{eq: integral intro}), where one replaces~$1-x_1\cdots x_n$ in the denominator by~$1-z\, x_1\cdots x_n$, with~$z$ a complex parameter. Such functions have already been considered in~\cite{rivoalcras, ballrivoal}. The relevant geometric objects are variations of mixed Hodge--Tate structures on~$\C-\{0,1\}$, or mixed Tate motives over~$\mathbb{A}_\Q^1-\{0,1\}$.
	
	\subsection{Contents}
	
		In \S\ref{section2} we recall some general facts about the categories in which the objects that we will be considering live, and in particular the categories~$\mathsf{MT}(\mathbb{Z})$ and~$\mathsf{MT}(\Q)$ of mixed Tate motives over~$\mathbb{Z}$ and~$\Q$. In \S\ref{section3} we introduce the zeta motives and examine their Betti and de Rham realizations. In \S\ref{section4}, which is more technical than the rest of the paper, we compute the full period matrix of the zeta motives, which allows us to define the odd zeta motives. In \S\ref{section6}, we apply our results to proving Theorems~\ref{thm: coefficients intro} and~\ref{thm: vanishing intro} on the coefficients of linear forms in zeta values.
	
	\subsection{Acknowledgements}
	
		Many thanks to Francis Brown, Pierre Cartier, Tanguy Rivoal and Don Zagier for fruitful discussions as well as comments and corrections on a preliminary version.

\section{Mixed Tate motives and their period matrices}\label{section2}

	We recall the construction of the categories~$\mathsf{MHTS}$,~$\mathsf{MT}(\Q)$ and~$\mathsf{MT}(\mathbb{Z})$, which sit as full subcategories of one another, as follows:
	$$\mathsf{MT}(\mathbb{Z}) \hookrightarrow \mathsf{MT}(\Q) \hookrightarrow \mathsf{MHTS}\ .$$

	\subsection{Mixed Hodge--Tate structures and their period matrices}
	
		\begin{defi}\label{def: mhts}
		A \emph{mixed Hodge--Tate structure} is a triple~$H=(H_\dR,H_\B,\alpha)$ consisting of:
		\begin{enumerate}[--]
		\item a finite-dimensional~$\Q$-vector space~$H_\B$, together with a finite increasing filtration indexed by even integers:~$\cdots\subset W_{2(n-1)}H_\B \subset W_{2n}H_\B \subset \cdots \subset H_\B$;
		\item a finite-dimensional~$\Q$-vector space~$H_\dR$, together with a grading indexed by even integers: $H_\dR=\bigoplus_n (H_\dR)_{2n}$;
		\item an isomorphism~$\alpha:H_\dR\otimes_\Q\C\stackrel{\simeq}{\longrightarrow} H_\B\otimes_\Q\C$;
		\end{enumerate}
		which satisfy the following conditions.
		\begin{enumerate}[--]
		\item For every integer~$n$, the isomorphism~$\alpha$ sends~$(H_\dR)_{2n}\otimes_\Q\C$ to~$W_{2n}H_\B\otimes_\Q\C$.
		\item For every integer~$n$, it induces an isomorphism~$\alpha_n:(H_\dR)_{2n}\otimes_\Q\C \stackrel{\simeq}{\longrightarrow} (W_{2n}H_\B/W_{2(n-1)}H_\B) \otimes_\Q\C\,$, which sends~$(H_\dR)_{2n}$ to~$(W_{2n}H_\B/W_{2(n-1)}H_\B)\otimes_\Q (2\pi i)^n\Q$.
		\end{enumerate}
		\end{defi}
		
		We call~$H_\B$ and~$H_\dR$ the \emph{Betti realization} and the \emph{de Rham realization} of the mixed Hodge--Tate structure, and~$\alpha$ the \emph{comparison isomorphism}. The filtration~$W$ on~$H_\B$ is called the \emph{weight filtration}. The grading on~$H_\dR$ is called the \emph{weight grading}, and the corresponding filtration~$W_{2n}H_\dR:=\bigoplus_{k\leq n}(H_\dR)_{2k}$ the \emph{weight filtration}.
		
		\begin{rem}
		More classically, a mixed Hodge--Tate structure is defined to be a mixed Hodge structure~\cite{delignehodge2,delignehodge3}~whose weight-graded quotients are of Tate type, i.e. of type~$(p,p)$ for some integer~$p$. One passes from that classical definition to Definition~\ref{def: mhts} by setting~$H_\B:=H$ and~$H_\dR:=\bigoplus_nW_{2n}H/W_{2(n-1)}H$. The isomorphism~$\alpha$ is induced by the inverses of the isomorphisms
		\begin{equation}\label{eq: splitting W}
		(W_{2n}H/W_{2(n-1)}H)\otimes_\Q\C \stackrel{\cong}{\longleftarrow} (W_{2n}H\otimes_\Q\C) \cap F^n (H\otimes_\Q\C)
		\end{equation}
		(multiplied by~$(2\pi i)^n$) which express the fact that the weight-graded quotients are of Tate type.
		\end{rem}
		
		It is convenient to view the comparison isomorphism~$\alpha:H_\dR\otimes_\Q\C\stackrel{\simeq}{\longrightarrow}H_\B\otimes_\Q\C$ as a pairing
		\begin{equation}\label{eq: pairing}
		H_\B^\vee\otimes_\Q H_\dR \longrightarrow \C \;\; , \; \varphi\otimes v\mapsto \langle\varphi,v\rangle\ ,
		\end{equation}
		where~$(\cdot)^\vee$ denotes the linear dual. The weight filtration on~$H_\B^\vee$ is defined by
		$$W_{-2n}H_\B^\vee:=(H_\B/W_{2(n-1)}H_\B)^\vee\ ,$$
		so that we have 
		$$W_{-2n}H_\B^\vee / W_{-2(n+1)}H_\B^\vee \cong (W_{2n}H_\B/W_{2(n-1)}H_\B)^\vee\ .$$ 
		The pairing (\ref{eq: pairing}) is compatible with the weight filtrations in that we have~$\langle\varphi,v\rangle=0$ for~$\varphi\in W_{-2m}H_\B^\vee$,~$v\in W_{2n}H_\dR$ and~$m<n$.\\
		
		If we choose bases for the~$\Q$-vector spaces~$H_\dR$ and~$H_\B$, then the matrix of~$\alpha$ in these bases, or equivalently the matrix of the pairing (\ref{eq: pairing}), is called a \emph{period matrix} of the mixed Hodge--Tate structure. We will always make the following assumptions on the choice of bases.
		\begin{enumerate}[--]
		\item The basis of~$H_\B$ is compatible with the weight filtration.
		\item The basis of~$H_\dR$ is compatible with the weight grading.
		\item For every~$n$, the matrix of the comparison isomorphism~$\alpha_n$ in the corresponding basis is~$(2\pi i)^n$ times the identity.
		\end{enumerate}
		This implies that any period matrix is block upper-triangular with successive blocks of~$(2\pi i)^n\,\mathrm{Id}$ on the diagonal. Conversely, any block upper-triangular matrix with successive blocks of~$(2\pi i)^n\,\mathrm{Id}$ on the diagonal is a period matrix of a mixed Hodge--Tate structure. 
		
		\begin{ex}
		Any matrix of the form
		$$ \left( \begin{array}{ccccc}
		1 & * & * & * & * \\
		0 & 2\pi i & 0 & * & *\\
		0 & 0 & 2\pi i & * & *\\
		0 & 0 & 0 & (2\pi i)^2 & 0 \\
		0 & 0 & 0 & 0 & (2\pi i)^2 
		\end{array} \right)$$ 
		defines a mixed Hodge--Tate structure~$H$ such that~$H_\dR=(H_\dR)_0\oplus (H_\dR)_2\oplus (H_\dR)_4$ has graded dimension~$(1,2,2)$.
		\end{ex}
		
	\subsection{The category of mixed Hodge--Tate structures}
		
		We denote by~$\mathsf{MHTS}$ the category of mixed Hodge--Tate structures. It is a neutral tannakian category over~$\Q$, which means in particular that it is an abelian~$\Q$-linear category equipped with a~$\Q$-linear tensor product~$\otimes$. We note that an object~$H\in\mathsf{MHTS}$ is endowed with a canonical weight filtration~$W$ by subobjects such that the morphisms in~$\mathsf{MHTS}$ are strictly compatible with~$W$. We have two natural fiber functors
		\begin{equation}\label{eq: realization MHTS}
		\omega_\B:\mathsf{MHTS}\rightarrow \mathsf{Vect}_\Q \hspace{1cm} \textnormal{and} \hspace{1cm} \omega_\dR:\mathsf{MHTS}\rightarrow \mathsf{Vect}_\Q
		\end{equation}
		from~$\mathsf{MHTS}$ to the category of finite-dimensional vector spaces over~$\mathbb{Q}$, which only remember the Betti realization~$H_\B$ and the de Rham realization~$H_\dR$ respectively. We note that the de Rham realization functor~$\omega_\dR$ factors through the category of finite-dimensional graded vector spaces. The comparison isomorphisms~$\alpha$ gives an isomorphism between the complexifications of the two fiber functors:
		\begin{equation}\label{eq: comparison MHTS}
		\omega_\dR\otimes_\Q\C \stackrel{\simeq}{\longrightarrow} \omega_\B\otimes_\Q\C\ .
		\end{equation}
		
		For an integer~$n$, we denote by~$\Q(-n)$ the mixed Hodge--Tate structure whose period matrix is the~$1\times 1$ matrix~$\left( (2\pi i)^n\right)$. Its weight grading and filtration are concentrated in weight~$2n$, hence we call it the \emph{pure Tate structure} of weight~$2n$. For~$H$ a mixed Hodge--Tate structure, the tensor product~$H\otimes\Q(-n)$ is simply denoted by~$H(-n)$ and called the~$n$-th \emph{Tate twist} of~$H$. A period matrix of~$H(-n)$ is obtained by multiplying a period matrix of~$H$ by~$(2\pi i)^n$. The weight grading and filtration of~$H(-n)$ are those of~$H$, shifted by~$2n$. 
		
	\subsection{Extensions between pure Tate structures}
		
		The pure Tate structures~$\Q(-n)$ are the only simple objects of the category~$\mathsf{MHTS}$. The extensions between them are easily described. Up to a Tate twist, it is enough to describe the extensions of~$\Q(-n)$ by~$\Q(0)$ for some integer~$n$. The corresponding extension group is given by
		$$\mathrm{Ext}_{\mathsf{MHTS}}^1(\Q(-n),\Q(0)) = \begin{cases} \C / (2\pi i)^{n}\Q & \textnormal{ if } n>0; \\ 0 & \textnormal{ otherwise.}\end{cases}$$
		More concretely, the extension corresponding to a number~$z\in \C / (2\pi i)^{n}\Q\,$ has a period matrix
		$$ \left( \begin{array}{cc}
		1 & z \\
		0 & (2\pi i)^{n}
		\end{array} \right).$$
		We note that the higher extension groups vanish:~$\mathrm{Ext}^r_{\mathsf{MHTS}}(H,H')=0$ for~$r\geq 2$ and~$H$,~$H'$ two mixed Hodge--Tate structures.
		
		\begin{ex}
		For a complex number~$a\in\C-\{0,1\}$, the cohomology group~$H^1(\C^*,\{1,a\})$ is an extension of~$\Q(-1)$ by~$\Q(0)$ corresponding to~$z=\log(a)\in\C/(2\pi i)\Q$. It is called the \emph{Kummer extension} of parameter~$a$.
		\end{ex}
		
	\subsection{Mixed Tate motives over~$\mathbb{Q}$}\label{par: def MT}
	
		Let~$\mathsf{DM}(\mathbb{Q})$ denote Voevodsky's triangulated category of motives over~$\mathbb{Q}$~\cite{voevodskytriangulated}. It is a~$\Q$-linear triangulated tensor category whose objects can be described in terms of complexes of varieties and whose morphisms can be described in terms of algebraic cycles (in particular, in terms of Bloch's higher Chow groups). There are invertible objects~$\Q(-n)\in\mathsf{DM}(\Q)$, where~$\Q(-1)$ is the reduced motive of the multiplicative group~$\mathbb{G}_m$, shifted by~$-1$ (we work with cohomological conventions). The triangulated subcategory of~$\mathsf{DM}(\Q)$ generated by these objects is denoted by~$\mathsf{DMT}(\Q)$. By using the relation between higher Chow groups and rational~$K$-theory~\cite{blochktheory} and Borel's computation of the rational~$K$-theory of number fields~\cite{borel}, Levine defined a natural~$t$-structure on~$\mathsf{DMT}(\Q)$~\cite{levinetatemotives}. The heart of this~$t$-structure is denoted by~$\mathsf{MT}(\Q)$ and called the category of \emph{mixed Tate motives} over~$\Q$. It is a (neutral) tannakian~$\Q$-linear category which contains the objects~$\Q(-n)$.
		
		There is a faithful and exact functor 
		\begin{equation}\label{eq: realization functor Q}
		\mathsf{MT}(\mathbb{Q}) \rightarrow \mathsf{MHTS}
		\end{equation}
		 from~$\mathsf{MT}(\mathbb{Q})$ to the category~$\mathsf{MHTS}$ of mixed Hodge--Tate structures, which is called the \emph{Hodge realization functor} (\cite[\S 2.13]{delignegoncharov}, see also~\cite{huberrealization,huberrealizationcorrigendum}). It sends the object~$\Q(-n)\in\mathsf{MT}(\Q)$ to the object~$\Q(-n)\in\mathsf{MHTS}$. Composing (\ref{eq: realization functor Q}) with the fiber functors (\ref{eq: realization MHTS}) gives the Betti and de Rham realization functors, still denoted by
		\begin{equation}\label{eq: realization MTQ}
		\omega_\B:\mathsf{MT}(\Q)\rightarrow \mathsf{Vect}_\Q \hspace{1cm} \textnormal{and} \hspace{1cm} \omega_\dR:\mathsf{MT}(\Q)\rightarrow \mathsf{Vect}_\Q\ ,
		\end{equation}
		and we still have a comparison isomorphism (\ref{eq: comparison MHTS}). We note that any object in~$\mathsf{MT}(\mathbb{Q})$ is endowed with a canonical weight filtration~$W$ by subobjects such that the morphisms in~$\mathsf{MT}(\mathbb{Q})$ are strictly compatible with~$W$. The realization morphisms are compatible with the weight filtrations.
		
		\begin{rem}\label{rem: conservativity}
		The functors (\ref{eq: realization MTQ}) are fiber functors for the tannakian category~$\mathsf{MT}(\Q)$. In particular, they are conservative.
		\end{rem}
		
		 The extension groups between the objects~$\Q(-n)$ are computed by the rational~$K$-theory of~$\Q$~\cite[\S 4]{levinetatemotives} and hence given, after Borel~\cite{borel}, by
		\begin{equation}\label{eq: extensions MTQ}
		\mathrm{Ext}^1_{\mathsf{MT}(\mathbb{Q})}(\Q(-n),\Q(0)) = \begin{cases} \bigoplus_{p \textnormal{ prime}}\Q & \textnormal{ if } n=1; \\ \Q & \textnormal{ if } n \textnormal{ is odd} \geq 3 ; \\ 0 & \textnormal{ otherwise.}\end{cases}
		\end{equation}
		
		As in the category~$\mathsf{MHTS}$, the higher extension groups vanish in~$\mathsf{MT}(\Q)$. The morphisms
		\begin{equation}\label{eq: realization ext}
		\mathrm{Ext}^1_{\mathsf{MT}(\mathbb{Q})}(\Q(-n),\Q(0))\longrightarrow \mathrm{Ext}^1_{\mathsf{MHTS}}(\Q(-n),\Q(0))\cong\C/(2\pi i)^n\Q
		\end{equation}
		induced by (\ref{eq: realization functor Q}) are easy to describe. For~$n=1$, the image of the direct summand indexed by a prime~$p$ is the line spanned by the class of the Kummer extension of parameter~$p$, i.e. by~$\log(p)\in\mathbb{C}/(2\pi i)\mathbb{Q}$. For~$n\geq 3$ odd, the image is the line spanned by~$\zeta(n)\in\mathbb{C}/(2\pi i)^n\mathbb{Q}$. Thus, the morphism (\ref{eq: realization ext}) is injective for every~$n$. This implies the following theorem~\cite[Proposition 2.14]{delignegoncharov}.
		
		\begin{thm}\label{thm: fully faithful}
		The realization functor (\ref{eq: realization functor Q}) is fully faithful.
		\end{thm}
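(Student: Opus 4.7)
The plan is to deduce full faithfulness from three ingredients already in place: the faithfulness of the realization functor $R \colon \mathsf{MT}(\Q) \to \mathsf{MHTS}$ (built into its definition, giving injectivity on Hom-spaces), the vanishing of $\mathrm{Ext}^r$ for $r \geq 2$ in both categories, and the injectivity of $R$ on $\mathrm{Ext}^1$ between pure Tate objects (stated just above the theorem via the classes spanned by $\log(p)$ and $\zeta(n)$). What remains is to prove surjectivity of the induced map
$$R_{M,M'} \colon \mathrm{Hom}_{\mathsf{MT}(\Q)}(M, M') \to \mathrm{Hom}_{\mathsf{MHTS}}(R(M), R(M'))$$
for all $M, M'$, and I would do this by a dévissage along the weight filtration.

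The base case is when both $M$ and $M'$ are pure Tate. Morphisms decompose as direct sums of $\mathrm{Hom}(\Q(-m), \Q(-n))$, which equals $\Q$ if $m = n$ and vanishes otherwise in both categories, and faithfulness on each diagonal summand (where both sides are one-dimensional) forces $R_{M,M'}$ to be an isomorphism. For the inductive step I would fix a short exact sequence $0 \to W_{2(n-1)} M \to M \to \mathrm{gr}^W_{2n} M \to 0$ coming from the top weight of $M$, apply the functor $\mathrm{Hom}_{\mathsf{MT}(\Q)}(-, M')$ and its realization counterpart, and use the vanishing of $\mathrm{Ext}^2$ to obtain two six-term exact sequences related by $R$. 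The five lemma then reduces surjectivity of $R_{M,M'}$ to the inductive hypothesis for shorter objects, together with injectivity of $R$ on $\mathrm{Ext}^1(\mathrm{gr}^W_{2n} M, M')$; a symmetric dévissage in the second variable, again using $\mathrm{Ext}^2 = 0$, further reduces this injectivity to the already-established injectivity on $\mathrm{Ext}^1$ between pure Tate objects.

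The principal obstacle is the coordination of the two dévissages: one in the first argument to establish fullness, and a second in the second argument (now operating on $\mathrm{Ext}^1$-groups rather than Hom-groups) to reduce the required $\mathrm{Ext}^1$-injectivity to the case where both sides are pure Tate. Once both inductions are run by stripping off the top weight-graded piece at each stage, they close simultaneously and the theorem follows purely from the homological ingredients listed above, with no further input from the specific structure of mixed Tate motives beyond the explicit description of $\mathrm{Ext}^1$ between pure Tate objects recalled in \eqref{eq: extensions MTQ} and the injectivity of \eqref{eq: realization ext}.
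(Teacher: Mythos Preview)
Your argument is correct and is precisely the standard d\'evissage that the paper has in mind: the paper does not actually prove the theorem but only observes that the injectivity of~\eqref{eq: realization ext} on $\mathrm{Ext}^1$ between pure Tate objects implies it, citing \cite[Proposition~2.14]{delignegoncharov} for the details. Your sketch supplies exactly those details; the cleanest way to organize the two interlocking d\'evissages you describe is as a single simultaneous induction on the total number of weight-graded pieces of $M$ and $M'$, proving at each stage both that $R$ is an isomorphism on $\mathrm{Hom}$ and that $R$ is injective on $\mathrm{Ext}^1$ (the latter via the four lemma, using $\mathrm{Ext}^2=0$), which avoids the coordination issue you flag.
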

		
		This theorem is very helpful, since it allows one to compute in the category~$\mathsf{MT}(\Q)$ with period matrices; in other words, a mixed Tate motive over~$\Q$ is uniquely determined by its period matrix.
	
	\subsection{Mixed Tate motives over~$\mathbb{Z}$}
	
		Let~$\mathsf{MT}(\mathbb{Z})$ denote the category of mixed Tate motives over~$\mathbb{Z}$, as defined in~\cite{delignegoncharov}. By definition, it is a full tannakian subcategory 
		$$\mathsf{MT}(\mathbb{Z})\hookrightarrow \mathsf{MT}(\mathbb{Q})$$
		of the category of mixed Tate motives over~$\mathbb{Q}$, which contains the pure Tate motives~$\Q(-n)$ for every integer~$n$. An object of $\mathsf{MT}(\mathbb{Q})$ is in $\mathsf{MT}(\mathbb{Z})$ if and only if it has no subquotient isomorphic to a non-split extension of $\Q(-n)$ by $\Q(-n+1)$.
		
		 The extension groups in the category $\mathsf{MT}(\mathbb{Z})$ satisfy the following properties:
		\begin{enumerate}[1.]
		\item~$\mathrm{Ext}^1_{\mathsf{MT}(\mathbb{Z})}(\Q(-1),\Q(0))=0$;
		\item the natural morphism~$\mathrm{Ext}^1_{\mathsf{MT}(\mathbb{Z})}(\Q(-n),\Q(0))\rightarrow \mathrm{Ext}^1_{\mathsf{MT}(\Q)}(\Q(-n),\Q(0))$ is an isomorphism for~$n\neq 1$.
		\end{enumerate}
		 As in the categories~$\mathsf{MHTS}$ and~$\mathsf{MT}(\Q)$, the higher extension groups vanish in~$\mathsf{MT}(\mathbb{Z})$.\\
				 	
		For~$n\geq 3$ odd, there is an essentially unique non-trivial extension of~$\Q(-n)$ by~$\Q(0)$ in the category~$\mathsf{MT}(\Q)$, which actually lives in~$\mathsf{MT}(\mathbb{Z})$. A period matrix for such an extension is 
		$$ \left( \begin{array}{cc}
		1 & \zeta(n) \\
		0 & (2\pi i)^{n}
		\end{array} \right).$$
		Apart from the case~$n=3$ (see~\cite[Corollary 11.3]{browndinnerparties} or Proposition~\ref{prop: period matrix Z odd} below), we do not know of any \emph{geometric} construction of these extensions.
		
\section{Definition of the zeta motives~$\Z^{(n)}$}\label{section3}

	We define the zeta motives~$\Z^{(n)}$ and explain how to define elements of their Betti and de Rham realizations. In particular, we define the classes of the Eulerian differential forms, which are elements of the de Rham realization~$\Z^{(n)}_\dR$ constructed out of the family of Eulerian polynomials. We also note that the zeta motives fit into an inductive system~$\cdots \rightarrow \Z^{(n-1)}\rightarrow \Z^{(n)}\rightarrow \cdots$ which is compatible with the Eulerian differential forms.

	\subsection{The definition}\label{par: def Z}
	
		Let~$n\geq 1$ be an integer. In the affine~$n$-space~$X_n=\mathbb{A}^n_\Q$ we consider the hypersurfaces
		$$A_n=\{x_1\cdots x_n=1\} \; \; \textnormal{ and }$$
		$$B_n=\bigcup_{1\leq i\leq n } \{x_i=0\}\cup\bigcup_{1\leq i\leq n}\{x_i=1\}\ .$$
		The union~$A_n\cup B_n$ is almost a normal crossing divisor inside~$X_n$: around the point~$P_n=(1,\ldots,1)$, it looks like~$z_1\cdots z_n(z_1+\cdots+z_n)=0$ (set~$x_i=\exp(z_i)$). Let 
		$$\pi_n:\td{X}_n\rightarrow X_n$$
		be the blow-up at~$P_n$, and~$E_n=\pi_n^{-1}(P_n)$ be the exceptional divisor. We denote respectively by~$\td{A}_n$ and~$\td{B}_n$ the strict transforms of~$A_n$ and~$B_n$ along~$\pi_n$. The union~$\td{A}_n\cup\td{B}_n\cup E_n$ is a simple normal crossing divisor inside~$\td{X}_n$. 
		
		There is an object~$\Z^{(n)}\in\mathsf{MT}(\Q)$, which we may abusively denote by
		$$\Z^{(n)}=H^n(\td{X}_n - \td{A}_n,(\td{B}_n\cup E_n) - (\td{B}_n\cup E_n)\cap\td{A}_n)\ ,$$
		such that its Betti and de Rham realizations (\ref{eq: realization MTQ}) are ($?\in\{\B,\dR\}$)
		$$\Z^{(n)}_{\,?}=H^n_?(\td{X}_n - \td{A}_n,(\td{B}_n\cup E_n) - (\td{B}_n\cup E_n)\cap\td{A}_n) \ .$$
		
		 We now give the precise definition of~$\Z^{(n)}$, along the lines of~\cite[Proposition 3.6]{goncharovperiodsmm}. Let us write~$Y=\td{X}_n-\td{A}_n$ and~$\partial Y=(\td{B}_n\cup E_n) - (\td{B}_n\cup E_n)\cap\td{A}_n$, viewed as schemes defined over~$\Q$. We have a decomposition into smooth irreducible components~$\partial Y=\bigcup_i\partial_i Y$, where~$i$ runs in a set of cardinality~$2n+1$. For a set~$I=\{i_1,\ldots,i_r\}$ of indices, we denote by~$\partial_I Y=\partial_{i_1}Y\cap\cdots\cap\partial_{i_r}Y$ the corresponding intersection; it is either empty or a smooth subvariety of~$X$ of codimension~$r$.
		
		We thus get an object
		\begin{equation}\label{eq: complex of varieties}
		\cdots \rightarrow \bigsqcup_{|I|=3}\partial_I Y \rightarrow \bigsqcup_{|I|=2}\partial_I Y \rightarrow \bigsqcup_{|I|=1}\partial_I Y \rightarrow Y \rightarrow 0
		\end{equation}
		in Voevodsky's triangulated category~$\mathsf{DM}(\Q)$, see \S\ref{par: def MT}. The differentials are the alternating sums of the natural closed immersions. One readily checks that the complex (\ref{eq: complex of varieties}) lives in the triangulated subcategory~$\mathsf{DMT}(\Q)$. By definition, the object~$\Z^{(n)}$ in~$\mathsf{MT}(\Q)$ is the~$n$-th cohomology group of the complex (\ref{eq: complex of varieties}) with respect to the~$t$-structure.
		
		\begin{defi}
		For~$n\geq 1$, we call~$\Z^{(n)}\in\mathsf{MT}(\Q)$ the \emph{$n$-th zeta motive}.
		\end{defi}
		
		\begin{rem}\label{rem: Z1Q0}
		For~$n=1$, the blow-up map~$\pi_1:\td{X}_1\rightarrow X_1$ is an isomorphism and~$\td{A}_1=\varnothing$, so that we get~$\Z^{(1)}=H^1(\mathbb{A}^1_\Q,\{0,1\})$. We have a long exact sequence in relative cohomology
		$$0\rightarrow H^0(\mathbb{A}^1_\Q,\{0,1\})\rightarrow H^0(\mathbb{A}^1_\Q) \rightarrow H^0(\{0\})\oplus H^0(\{1\}) \rightarrow \Z^{(1)} \rightarrow 0\ ,$$
		which shows that~$H^0(\mathbb{A}^1_\Q,\{0,1\})=0$ and that we have an isomorphism~$\Z^{(1)}\simeq \Q(0)$.
		\end{rem}
		
		\begin{rem}
		We will prove in Proposition~\ref{prop: Z MTZ} that~$\Z^{(n)}$ is actually an object of the full subcategory~$\mathsf{MT}(\mathbb{Z})\hookrightarrow \mathsf{MT}(\Q)$. It would be possible, but a little technical, to prove it directly from the definition by using the criterion~\cite[Proposition 4.3]{goncharovmanin} on some compactification of~$\td{X}_n-\td{A}_n$.
		\end{rem}
		
	\subsection{Betti and de Rham realizations, 1}
	
		We now give a first description of the Betti and de Rham realizations of the zeta motive~$\Z^{(n)}$.
		
		We let~$C_\bullet$ denote the functor which assigns to a topological space the complex of singular chains with rational coefficients. By definition, the dual of the Betti realization~$\Z^{(n),\vee}_\B$ is the~$n$-th homology group of the total complex of the double complex
		\begin{equation}\label{eq: double complex Betti}
		\begin{gathered}\xymatrix{
		 \ar@{.>}[r] & \displaystyle\bigoplus_{|I|=2} C_0(\partial_I Y(\C)) \ar[r]& \displaystyle\bigoplus_{|I|=1}C_0(\partial_I Y(\C)) \ar[r]& C_0(Y(\C))\\
		 & \ar@{.>}[r]\ar@{.>}[u]& \displaystyle\bigoplus_{|I|=1} C_1(\partial_I Y(\C)) \ar[r]\ar[u]& C_1(Y(\C)) \ar[u]\\
		&& \ar@{.>}[r]\ar@{.>}[u]& C_2(Y(\C)) \ar[u] \\
		&&&  \ar@{.>}[u]
		}\end{gathered}
		\end{equation}
		obtained by applying the functor~$C_\bullet$ to the complex (\ref{eq: complex of varieties}). One readily verifies that this complex is quasi-isomorphic to the quotient complex~$C_\bullet(Y(\C))/C_\bullet(\partial Y(\C))$, classically used to define the relative homology groups~$H_\bullet^\B(Y,\partial Y)=H_\bullet^{\mathrm{sing}}(Y(\C),\partial Y(\C))$.
		
		We let~$\Omega^\bullet_{\partial_I Y}$ denote the complex of sheaves of algebraic differential forms on the smooth variety~$\partial_I Y$, extended by zero to~$Y$. By definition, the de Rham realization~$\Z^{(n)}_\dR$ is the hypercohomology of the total complex of the double complex of sheaves
		
		\begin{equation}\label{eq: double complex de Rham}
		\begin{gathered}\xymatrix{
		 & \displaystyle\bigoplus_{|I|=2} \Omega^0_{\partial_I Y} \ar@{.>}[l]\ar@{.>}[d]& \displaystyle\bigoplus_{|I|=1} \Omega^0_{\partial_I Y} \ar[l]\ar[d]& \Omega^0_Y \ar[l]\ar[d]\\
		 &  & \displaystyle\bigoplus_{|I|=1} \Omega^1_{\partial_I Y} \ar@{.>}[l]\ar@{.>}[d]& \Omega^1_Y \ar[l]\ar[d]\\
		&& & \Omega^2_Y \ar@{.>}[l]\ar@{.>}[d] \\
		&&&  
		}\end{gathered}
		\end{equation}
		where the vertical arrows are the exterior derivatives and the horizontal arrows are the alternating sums of the natural restriction maps as in the complex (\ref{eq: complex of varieties}).
		
		The comparison morphism between the Betti and de Rham realizations of~$\Z^{(n)}$ is induced, after complexification, by the morphism from the double complex (\ref{eq: double complex de Rham}) to the double complex (\ref{eq: double complex Betti}) given by integration. Note that one first has to replace (\ref{eq: double complex Betti}) by the double complex of sheaves of singular cochains.
	
	\subsection{Betti and de Rham realizations, 2}

		We now give descriptions of the Betti and de Rham realizations of~$\Z^{(n)}$ that allow one to work directly in the affine space~$X_n$ and do not require to work in the blow-up~$\td{X}_n$. The justification of the blow-up process goes as follows. Suppose that one wants to find a motive whose periods include all absolutely convergent integrals of the form
		\begin{equation}\label{eq: generic integral}
		\int_{[0,1]^n}\frac{P(x_1,\ldots,x_n)}{(1-x_1\cdots x_n)^N}\, dx_1\cdots dx_n
		\end{equation}
		where~$P(x_1,\ldots,x_n)$ is a polynomial with rational coefficients, and~$N\geq 0$ is an integer. On the Betti side, we note that the boundary of~$[0,1]^n$ intersects the divisor~$A_n(\C)$ of poles of the differential forms at the point~$P_n(\C)$. The blow-up process is thus required in order to have a class that represents the integration domain. On the de Rham side, the blow-up process is required in order to only consider \emph{absolutely convergent} integrals of the form (\ref{eq: generic integral}). This is made precise by Propositions~\ref{prop: excision} and~\ref{prop: poles} below.\\
		
		We start with the Betti realization. Let us write~$\accentset{\circ}{A}_n=A_n-P_n$ and note that this is not a closed subset, but only a locally closed subset, of~$X_n$.
	
		\begin{prop}\label{prop: excision}
		The blow-up morphism~$\pi_n:\td{X}_n\rightarrow X_n$ induces an isomorphism
		$$\Z^{(n),\vee}_\B \stackrel{\cong}{\longrightarrow} H_n^{\mathrm{sing}}(X_n(\C)-\accentset{\circ}{A}_n(\C),B_n(\C)-B_n(\C)\cap\accentset{\circ}{A}_n(\C))\ .$$
		\end{prop}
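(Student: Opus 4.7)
Write $Y = \td{X}_n - \td{A}_n$, $\partial Y = (\td{B}_n \cup E_n) - (\td{B}_n \cup E_n)\cap\td{A}_n$, $X = X_n - \accentset{\circ}{A}_n$, and $\partial X = B_n - B_n \cap \accentset{\circ}{A}_n$, so that the two sides of the claimed isomorphism are $H_n^{\mathrm{sing}}$ of the pairs $(Y(\C), \partial Y(\C))$ and $(X(\C), \partial X(\C))$. The plan is to show that $\pi_n$ gives a proper map of pairs $(Y, \partial Y) \to (X, \partial X)$ which induces a homeomorphism on the quotient spaces $Y/\partial Y \xrightarrow{\sim} X/\partial X$, from which the isomorphism on relative homology is immediate.

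First I would check, by a direct set-theoretic manipulation using that $\pi_n$ is an isomorphism over $X_n - \{P_n\}$, that $\pi_n$ restricts to a proper surjection $Y \to X$ which is a homeomorphism away from the preimage of $P_n$: it identifies $Y - E_n$ with $X - \{P_n\}$, sends $\partial Y$ onto $\partial X$ (collapsing $E_n \cap Y$ to $P_n$), and in particular gives a homeomorphism $Y - \partial Y = \td{X}_n - (\td{A}_n \cup \td{B}_n \cup E_n) \xrightarrow{\sim} X_n - (A_n \cup B_n) = X - \partial X$ (here one uses that $P_n \in A_n \cap B_n$, so removing the point $P_n$ on both sides cancels).

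The geometric heart of the proof is the computation of the exceptional fiber $E_n \cap Y$. In the local coordinates $z_i = x_i - 1$ at $P_n$, the defining equation of $A_n$ is $\prod_{i}(1 + z_i) - 1 = \sum_i z_i + O(z^2)$, so $A_n$ is smooth at $P_n$ with tangent hyperplane $\{\sum z_i = 0\}$. The strict transform $\td{A}_n$ therefore meets the exceptional divisor $E_n \cong \mathbb{P}^{n-1}$ in the projective hyperplane $H = \{[\zeta_1 : \cdots : \zeta_n] : \sum \zeta_i = 0\}$, whence
$$E_n \cap Y = E_n - E_n \cap \td{A}_n \cong \mathbb{P}^{n-1}(\C) - H \cong \mathbb{A}^{n-1}(\C)$$
is contractible. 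In particular, $\pi_n$ collapses this connected contractible subset of $\partial Y$ onto the single point $P_n \in \partial X$ while being a homeomorphism on its complement.

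To conclude, since $\pi_n$ is proper and both pairs are good (they consist of closed semi-algebraic subsets of complex algebraic varieties, hence are CW pairs), the induced continuous proper bijection between the Hausdorff quotient spaces $Y/\partial Y \to X/\partial X$ is a homeomorphism, and therefore
$$H_n^{\mathrm{sing}}(Y, \partial Y) \cong \widetilde{H}_n^{\mathrm{sing}}(Y/\partial Y) \xrightarrow{\sim} \widetilde{H}_n^{\mathrm{sing}}(X/\partial X) \cong H_n^{\mathrm{sing}}(X, \partial X).$$
The main obstacle is the local computation identifying $\td{A}_n \cap E_n$ as a hyperplane in $\mathbb{P}^{n-1}$; once this is granted, the contractibility of the exceptional fiber and the quotient-space argument yield the result.
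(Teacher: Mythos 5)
Your overall strategy (identify $Y/\partial Y \cong X/\partial X$, then invoke the good-pair isomorphism $H_*(A,B)\cong\widetilde H_*(A/B)$) is the one the paper's proof also takes, via its reference to Hatcher's Proposition~2.22. But there is a concrete gap: the assertion that $\pi_n$ restricts to a \emph{proper} surjection $Y\to X$ is false for $n\geq 2$. Indeed, since $\accentset{\circ}{A}_n=A_n-P_n$ does not contain $P_n$, one has $\pi_n^{-1}(X)=Y\sqcup(\td{A}_n\cap E_n)$, so $Y$ is a proper \emph{open} subset of $\pi_n^{-1}(X)$, and restricting a proper map to a proper open subset is not proper. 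Concretely, if $(y_k)$ is a sequence in $Y-\partial Y$ converging in $\td{X}_n(\C)$ to a point of $\td{A}_n\cap E_n$, then $K:=\{y_k\}$ is closed in $Y$ and disjoint from $\partial Y$, so $Y-K$ is an open neighbourhood of $\partial Y$, yet $\pi_n(Y-K)=X-\{\pi_n(y_k)\}$ is not a neighbourhood of $P_n$ in $X$ because the points $\pi_n(y_k)$ accumulate at $P_n$. Passing to quotients, this gives an open set of $Y/\partial Y$ whose image in $X/\partial X$ is not open, so the induced continuous bijection $Y/\partial Y\to X/\partial X$ is \emph{not} a homeomorphism, and the argument as written does not close.

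The irony is that the contractibility of $E_n\cap Y\cong\mathbb{A}^{n-1}(\C)$, which you correctly compute but present only as a side remark, is in fact the essential geometric input a correct argument needs. One workable route is to compare the long exact sequences of the pairs $(Y,\partial Y)$ and $(X,\partial X)$ via the five lemma, proving the isomorphisms $H_*(Y)\cong H_*(X)$ and $H_*(\partial Y)\cong H_*(\partial X)$ by Mayer--Vietoris, using the cover $Y=(Y-E_n)\cup N_Y$ with $N_Y=\pi_n^{-1}(N_X)\cap Y$ and $N_X$ a small star-shaped (hence contractible) neighbourhood of $P_n$ in $X$; a normal-crossings tubular neighbourhood of $E_n$ makes $N_Y$ deformation retract onto $E_n\cap Y$, so both $N_Y$ and $N_X$ are contractible and the five lemma applies. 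So the key geometric observation in your writeup is right, but the topological step deducing the homology isomorphism from it needs to be redone along those lines rather than via a claimed homeomorphism of quotients.
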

		
		\begin{proof}
		The blow-up morphism~$\pi_n$ is the contraction of the exceptional divisor~$E_n$ onto the point~$P_n$. Thus, this is a consequence of the classical excision theorem in singular homology, see for instance~\cite[Proposition 2.22]{hatcherbook}.
		\end{proof}
	
		As a consequence of Proposition~\ref{prop: excision}, we see that the unit~$n$-square~$\square^n=[0,1]^n\subset X_n(\C)-\accentset{\circ}{A}_n(\C)$ defines a class 
		$$[\square^n]\in \Z^{(n),\vee}_\B\ .$$ 
		When viewed in~$\td{X}_n(\C)-\td{A}_n(\C)$, it is the class of the strict transform~$\td{\square}^n$, which has the combinatorial structure of an~$n$-cube truncated at one of its vertices.\\
	
		We now turn to a description of the de Rham realization of~$\Z^{(n)}$. Instead of giving a general description in terms of algebraic differential forms on~$X_n-A_n$, we will only give a way of defining many classes in~$\Z^{(n)}_\dR$, which will turn out to be enough for our purposes.
	
		\begin{defi}\label{def: integrable}
		An algebraic differential~$n$-form on~$X_n-A_n$ is said to be \emph{integrable} if it can be written as a linear combination of forms of the type	
		\begin{equation}\label{eq: convergent form}
		\omega=\dfrac{(1-x_1)^{v_1-1}\cdots (1-x_n)^{v_n-1}f(x_1,\ldots,x_n)}{(1-x_1\cdots x_n)^N}\, dx_1\cdots dx_n
		\end{equation}
		with~$v_1,\ldots,v_n\geq 1$ and~$N\geq 0$ integers such that~$v_1+\cdots+v_n\geq N+1$, and~$f(x_1,\ldots,x_n)$ a polynomial with rational coefficients.
		\end{defi}
	
		The terminology is justified by the following proposition.
	
		\begin{prop}\label{prop: poles}
		Let~$\omega$ be an algebraic differential~$n$-form on~$X_n-A_n$. If~$\omega$ is integrable, then~$\pi_n^*(\omega)$ does not have a pole along~$E_n$, and thus defines a class in~$\Z^{(n)}_\dR$. In particular, the integral
		$$\int_{\td{\square}^n}\pi_n^*(\omega)=\int_{\square^n}\omega$$
		is absolutely convergent and is a period of~$\Z^{(n)}$.
		\end{prop}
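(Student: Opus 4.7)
The plan is to reduce to a local computation at the point $P_n$ and to track the order of vanishing of $\pi_n^*\omega$ along the exceptional divisor $E_n$. Since $\pi_n$ is an isomorphism over $X_n - \{P_n\}$ and $\omega$ is regular on $X_n - A_n$ by assumption, the pullback $\pi_n^*\omega$ is already regular on $\td{X}_n - (\td{A}_n \cup E_n)$, so the only thing to verify is that it extends regularly across $E_n$ away from $\td{A}_n$.

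For this local step I would switch to coordinates $y_i = 1 - x_i$ centered at $P_n$, and work in the blow-up chart $y_1 = t$, $y_i = t\, s_i$ for $i \geq 2$, in which $E_n = \{t = 0\}$. Three substitutions then govern the calculation: the factor $\prod_i (1-x_i)^{v_i - 1}$ contributes $t^{V - n}$, where $V := v_1 + \cdots + v_n$; the quantity $1 - \prod_i(1-y_i)$ factors as $t \cdot g(t,s)$ with $g(0,s) = 1 + s_2 + \cdots + s_n$, contributing $t^{-N}$ to the integrand; and the Jacobian $dy_1 \cdots dy_n$ equals $\pm\, t^{n-1}\, dt\, ds_2 \cdots ds_n$. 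Collecting powers yields $t^{V - N - 1}$ times an algebraic form that is regular on $E_n$ outside $\{g=0\} = \td{A}_n$, and the integrability hypothesis $V \geq N + 1$ is exactly what makes this exponent non-negative. By the symmetric role of the coordinates $y_1, \ldots, y_n$ in both the original form and the blow-up, the analysis in the other standard charts is identical.

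With regularity of $\pi_n^*\omega$ on all of $\td{X}_n - \td{A}_n$ in hand, it is a global section of $\Omega^n_Y$ for $Y = \td{X}_n - \td{A}_n$; since every boundary stratum $\partial_I Y$ with $|I| \geq 1$ has dimension strictly less than $n$, all the pieces $\Omega^n_{\partial_I Y}$ in the double complex (\ref{eq: double complex de Rham}) vanish, no further compatibility is required, and $\pi_n^*\omega$ automatically defines a class $[\omega] \in \Z^{(n)}_\dR$. The same local computation shows that on $E_n$ the strict transform $\td{\square}^n$ is confined to the positive orthant $\{s_i \geq 0\}$, which is disjoint from $\td{A}_n \cap E_n = \{1 + s_2 + \cdots + s_n = 0\}$; hence $\td{\square}^n$ is a compact subset of $\td{X}_n(\C) - \td{A}_n(\C)$, on which $\pi_n^*\omega$ is continuous and bounded, so the integral converges absolutely. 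The equality $\int_{\td{\square}^n}\pi_n^*\omega = \int_{\square^n}\omega$ is then the usual change-of-variables formula applied off the measure-zero exceptional locus, and the resulting integral is the pairing $\langle [\square^n], [\omega]\rangle$, hence a period of $\Z^{(n)}$.

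The main obstacle is simply the local bookkeeping: one has to verify that the $t^{-N}$ from the denominator, the $t^{V-n}$ from the Beukers-type numerator, and the $t^{n-1}$ from the Jacobian combine to exactly $t^{V-N-1}$, at which point the integrability condition translates verbatim into regularity. Nothing else in the argument is more than a standard consequence of blowing up a single smooth point.
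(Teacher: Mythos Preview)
Your proof is correct and follows essentially the same approach as the paper: both pass to coordinates $y_i=1-x_i$ centered at $P_n$, work in a standard affine chart of the blow-up where $E_n=\{t=0\}$, and track the three contributions (numerator $t^{V-n}$, denominator $t^{-N}$, Jacobian $t^{n-1}$) to obtain the exponent $V-N-1\geq 0$. You add a bit more detail than the paper on why the resulting class sits in the double complex and why the integral over $\td{\square}^n$ converges, but the core local computation is identical.
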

		
		\begin{proof}
		We write~$\omega$ as in (\ref{eq: convergent form}). We note that the only problem for absolute convergence is around the point~$(1,\ldots,1)$. Let us thus make the change of variables~$y_i=1-x_i$ for~$i=1,\ldots,n$, and~$g(y_1,\ldots,y_n)=(-1)^n\,f(x_1,\ldots,x_n)$. We write~$h(y_1,\ldots,y_n)=1-(1-y_1)\cdots (1-y_n)$ so that we have
		$$\omega=\dfrac{y_1^{v_1-1}\cdots y_n^{v_n-1} g(y_1,\ldots,y_n)}{h(y_1,\ldots,y_n)^N}\,dy_1\cdots dy_n\ .$$
		There are~$n$ natural affine charts for the blow-up~$\pi_n:\td{X}_n\rightarrow X_n$ of the point~$(0,\ldots,0)$, and by symmetry it is enough to work in the first one. We then have local coordinates~$(z_1,\ldots,z_n)$ on~$\td{X}_n$, which are linked to the coordinates~$(y_1,\ldots,y_n)=\pi_n(z_1,\ldots,z_n)$ by the formula
		$$(y_1,\ldots,y_n)=(z_1,z_1z_2,\ldots,z_1z_n)\ .$$
		The problem of convergence occurs in the neighborhood of the exceptional divisor~$E_n$, which is defined by the equation~$z_1=0$. Since~$h(0,\ldots,0)=0$, we may write
		$$h(z_1,z_1z_2,\ldots,z_1z_n)=z_1\,\td{h}(z_1,\ldots,z_n)$$ 
		with~$\td{h}(z_1,\ldots,z_n)$ a polynomial such that~$\td{h}(0,\ldots,0)=1$. The strict transform~$\td{A}_n$ of~$A_n$ is thus defined by the equation~$\td{h}(z_1,\ldots,z_n)=0$.
		We note that we have~$dy_1\cdots dy_n=z_1^{n-1}dz_1 \cdots dz_n$, so that we can write 
		$$\pi_n^*(\omega)=\dfrac{z_1^{v_1-1}(z_1z_2)^{v_2-1}\cdots (z_1z_n)^{v_n-1} g(z_1,z_1z_2,\ldots,z_1z_n)}{z_1^N\td{h}(z_1,\ldots,z_n)^N}\,z_1^{n-1} dz_1\cdots dz_n=z_1^{v_1+\cdots+v_n-N-1}\Omega\ ,$$
		where~$\Omega$ has a pole along~$\td{A}_n$ but not along~$E_n$. The claim follows.
		\end{proof}
		
		We make an abuse of notation and denote by 
		$$[\omega]\in\Z^{(n)}_\dR$$
		the class of the pullback~$\pi_n^*(\omega)$ for~$\omega$ integrable, so that the comparison isomorphism reads
		$$\langle[\square^n],[\omega]\rangle=\int_{\square^n}\omega\ .$$
		
		We note the converse of Proposition~\ref{prop: poles}, which we will not use.
		
		\begin{prop}\label{prop: polesconverse}
		Let~$\omega$ be an algebraic differential~$n$-form on~$X_n-A_n$. If the integral~$\int_{\square^n}\omega$ is absolutely convergent, then~$\omega$ is integrable.
		\end{prop}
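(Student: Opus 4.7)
The plan is to localize the problem at the vertex $(1,\ldots,1)$ of the cube, Taylor expand $\omega$ there, and show that absolute convergence of the integral pins down exactly the same support condition on the Taylor coefficients as integrability in the sense of Definition~\ref{def: integrable}.

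Write $\omega = P(x_1,\ldots,x_n)/(1-x_1\cdots x_n)^N\, dx_1\cdots dx_n$ with $P$ a polynomial, pass to local coordinates $y_i = 1-x_i$, and Taylor expand $P = \sum_\alpha c_\alpha\, y^\alpha$ as a finite sum. A preliminary observation is that $\omega$ is integrable in the sense of Definition~\ref{def: integrable} if and only if $c_\alpha = 0$ for every multi-index $\alpha$ with $|\alpha|<N-n+1$. Indeed, any admissible summand $(1-x_1)^{v_1-1}\cdots(1-x_n)^{v_n-1} f/(1-x_1\cdots x_n)^N\, dx$ has Taylor expansion supported in multi-degrees $\alpha$ with $\alpha_i\geq v_i-1$, hence $|\alpha|\geq |v|-n\geq N+1-n$; and conversely every monomial $c_\alpha\, y^\alpha$ with $|\alpha|\geq N-n+1$ is itself of that admissible shape with $v_i=\alpha_i+1$ and $f=c_\alpha$ constant. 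It thus suffices to show that absolute convergence of $\int_{\square^n}\omega$ forces $c_\alpha = 0$ whenever $|\alpha|<N-n+1$.

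I would then examine the leading behavior of $\pi_n^*\omega$ along the exceptional divisor $E_n$. By symmetry it is enough to work in the first affine chart, where $(y_1,\ldots,y_n) = (z_1,z_1z_2,\ldots,z_1z_n)$. Reusing the computations from the proof of Proposition~\ref{prop: poles} (namely $y^\alpha = z_1^{|\alpha|}z_2^{\alpha_2}\cdots z_n^{\alpha_n}$, $h(y) = z_1\td{h}(z)$ with $\td{h}(0,z_2,\ldots,z_n) = 1+z_2+\cdots+z_n$, and $dy = z_1^{n-1}dz$), one obtains, up to sign,
\[
\pi_n^*\omega \;=\; \sum_\alpha c_\alpha\, z_1^{|\alpha|+n-1-N}\, \frac{z_2^{\alpha_2}\cdots z_n^{\alpha_n}}{\td{h}(z)^N}\, dz_1\cdots dz_n.
\]
Setting $M=\min\{|\alpha|:c_\alpha\neq 0\}$, the leading part of $\pi_n^*\omega$ near $E_n=\{z_1=0\}$ equals $z_1^{M+n-1-N}\,R(z_2,\ldots,z_n)/\td{h}(0,z')^N\,dz$, where $R(z')=\sum_{|\alpha|=M}c_\alpha\, z_2^{\alpha_2}\cdots z_n^{\alpha_n}$.

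The delicate point, and the main obstacle, is ruling out miraculous cancellations that could let the integral converge even when $M<N-n+1$. The polynomial $R$ is not identically zero: via $\alpha_1=M-\alpha_2-\cdots-\alpha_n$, the $\alpha$ with $|\alpha|=M$ biject with the $(\alpha_2,\ldots,\alpha_n)\in\mathbb{Z}_{\geq 0}^{n-1}$ satisfying $\alpha_2+\cdots+\alpha_n\leq M$, so the monomials appearing in $R$ are pairwise distinct, hence linearly independent, and the minimality of $M$ guarantees $R\not\equiv 0$. Since $\td{h}(0,z')=1+z_2+\cdots+z_n\geq 1$ on $[0,1]^{n-1}$ and $|R|$ is bounded below by a positive constant on some open subset $U\subset[0,1]^{n-1}$, we obtain $|\pi_n^*\omega|\geq c\, z_1^{M+n-1-N}$ on a region $[0,\epsilon]\times U$ of $\td{\square}^n$ for suitable $c>0$. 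Absolute convergence of $\int_{\td{\square}^n}\pi_n^*\omega=\int_{\square^n}\omega$ then forces $M+n-1-N\geq 0$, i.e.\ $|\alpha|\geq N-n+1$ for every $\alpha$ with $c_\alpha\neq 0$, which by the preliminary observation means that $\omega$ is integrable.
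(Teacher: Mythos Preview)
Your argument is correct and follows essentially the same route as the paper: pass to $y$-coordinates, go to the blow-up chart, isolate the lowest-order term in $z_1$, and show that if its exponent is negative then the integral diverges over a region where the leading polynomial $R(z')$ does not vanish. You are in fact more explicit than the paper both in justifying $R\not\equiv 0$ and in formulating the preliminary characterization of integrability via Taylor support; the only small looseness is that the admissible summands in Definition~\ref{def: integrable} may carry different exponents $N$, but passing to a common denominator (and noting that multiplication by $1-x_1\cdots x_n$ raises the $y$-order by exactly one) immediately repairs the forward direction of your preliminary observation.
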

		
		\begin{proof}
		In the coordinates~$y_i=1-x_i$, we write 
		$$\omega=\dfrac{P(y_1,\ldots,y_n)}{h(y_1,\ldots,y_n)^N}\, dy_1\cdots dy_n$$
		with~$P(y_1,\ldots,y_n)$ a polynomial with rational coefficients. If the integral~$\int_{\square^n}\omega$ is absolutely convergent in the neighborhood of the point~$(0,\cdots,0)$, then after the change of variables
		$$\phi(z_1,\ldots,z_n)=(z_1,z_1z_2,\ldots,z_1z_n)$$
		we get an absolutely convergent integral in the neighborhood of~$z_1=0$. We write, as in the proof of Proposition~\ref{prop: poles},
		$$\phi^*(\omega)=\dfrac{P(z_1,z_1z_2,\ldots,z_1z_n)}{z_1^{N-n+1}\td{h}(z_1,\ldots,z_n)^N}\, dz_1\cdots dz_n\ .$$
		Let us write 
		$$P(y_1,\ldots,y_n)=\sum_{\underline{a}}\lambda_{\underline{a}}\,y_1^{a_1-1}\cdots y_n^{a_n-1}$$
		with~$\lambda_{\underline{a}}\in\Q$ for every multi-index~$\underline{a}=(a_1,\ldots,a_n)$. We then have
		$$P(z_1,z_1z_2,\ldots,z_1z_n)=\sum_{\underline{a}}\lambda_{\underline{a}}\,z_1^{a_1+\cdots+a_n-n}z_2^{a_2-1}\cdots z_n^{a_n-1}\ .$$		
		Let~$v$ denote the smallest integer such that there exists a multi-index~$\underline{a}$ with~$|\underline{a}|:=a_1+\cdots+a_n=v$. We then have an equivalence 
		$$P(z_1,z_1z_2,\ldots,z_1z_n) \sim_{z_1\rightarrow 0} z_1^{v-n}Q(z_2,\ldots,z_n)$$
		where~$Q(z_2,\ldots,z_n)=\sum_{|\underline{a}|=v}\lambda_{\underline{a}}\,z_2^{a_2-1}\cdots z_n^{a_n-1}$. We also have the equivalence
		$$\widetilde{h}(z_1,\ldots,z_n)\sim_{z_1\rightarrow 0} 1+z_2+\cdots +z_n\ ,~$$
		from which we deduce
		$$\phi^*(\omega) \sim_{z_1\rightarrow 0} z_1^{v-N-1}dz_1 \,\,\dfrac{Q(z_2,\ldots,z_n)}{(1+z_2+\cdots+z_n)^N}\, dz_2\cdots dz_n\ .$$
		This gives an absolutely convergent integral in the neighborhood of~$z_1=0$ if and only if~$v\geq N+1$, which is exactly the integrability condition.
		\end{proof}
		
	\subsection{The Eulerian differential forms}
	
		Recall that the family of \textit{Eulerian polynomials}~$E_r(x)$,~$r\geq 0$, is defined by the equation
		\begin{equation}\label{eqdefE}
		\frac{E_r(x)}{(1-x)^{r+1}}=\sum_{j\geq 0}(j+1)^rx^j\ .
		\end{equation}
		We refer to~\cite{foataeulerian} for a survey on Eulerian polynomials. If~$r\geq 1$, then (\ref{eqdefE}) is equivalent to 
		$$\frac{E_r(x)}{(1-x)^{r+1}}=\frac{1}{x}\left(x\frac{d}{dx}\right)^r\frac{1}{1-x}\ \cdot$$
		For instance, we have~$E_0(x)=E_1(x)=1$,~$E_2(x)=1+x$,~$E_3(x)=1+4x+x^2$.
		The Eulerian polynomials satisfy the recurrence relation
		\begin{equation}\label{eqrecurrenceE}
		E_{r+1}(x)=x(1-x)E'_{r}(x)+(1+rx)E_r(x)\ .
		\end{equation}
		For integers~$n\geq 2$ and~$k=2,\ldots,n$, we define a differential form
		$$\omega_k^{(n)}=\dfrac{E_{n-k}(x_1\cdots x_n)}{(1-x_1\cdots x_n)^{n-k+1}}\, dx_1\cdots  dx_n.$$
		Note that we have~$\omega_n^{(n)}=\dfrac{dx_1\cdots dx_n}{1-x_1\cdots x_n}$. 
		
		\begin{lem}\label{lem: eulerian forms}
		For~$k=2,\ldots,n$, the form~$\omega_k^{(n)}$ defines a class~$[\omega_k^{(n)}]\in \Z^{(n)}_\dR$ and we have	
		\begin{equation}\label{eqzetad}
		\langle[\square^n],[\omega_k^{(n)}]\rangle=\int_{\square^n}\omega_k^{(n)}=\zeta(k)\ .
		\end{equation}
		\end{lem}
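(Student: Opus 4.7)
The proof splits naturally into two independent parts: checking that $\omega_k^{(n)}$ is integrable in the sense of Definition~\ref{def: integrable} (which, via Proposition~\ref{prop: poles}, gives the class in $\Z^{(n)}_\dR$ and the absolute convergence of the integral), and then evaluating the integral by a term-by-term series expansion using the defining identity of the Eulerian polynomials.

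For the integrability, I would simply match the form $\omega_k^{(n)}$ against the template~(\ref{eq: convergent form}) by taking $v_1=\cdots=v_n=1$, $N=n-k+1$, and $f(x_1,\ldots,x_n)=E_{n-k}(x_1\cdots x_n)\in\Q[x_1,\ldots,x_n]$. The integrability condition $v_1+\cdots+v_n\geq N+1$ then reads $n\geq n-k+2$, i.e.\ $k\geq 2$, which is exactly the hypothesis. Proposition~\ref{prop: poles} then yields the class $[\omega_k^{(n)}]\in\Z^{(n)}_\dR$ and guarantees that $\int_{\square^n}\omega_k^{(n)}$ converges absolutely.

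For the evaluation, I would plug $x=x_1\cdots x_n$ and $r=n-k$ into the generating series~(\ref{eqdefE}) to rewrite
$$\omega_k^{(n)} \;=\; \sum_{j\geq 0}(j+1)^{n-k}\,(x_1\cdots x_n)^j\, dx_1\cdots dx_n$$
on the open cube, where the series converges pointwise with non-negative terms. By Tonelli (non-negativity on $[0,1)^n$) one may integrate term by term, and the elementary computation
$$\int_{\square^n}(x_1\cdots x_n)^j\,dx_1\cdots dx_n \;=\; \frac{1}{(j+1)^n}$$
gives
$$\int_{\square^n}\omega_k^{(n)} \;=\; \sum_{j\geq 0}\frac{(j+1)^{n-k}}{(j+1)^n} \;=\; \sum_{m\geq 1}\frac{1}{m^k} \;=\; \zeta(k).$$

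There is no real obstacle here: the only subtle point is the justification of term-by-term integration, which is handled by Tonelli because every term is non-negative on $\square^n$, and the fact that the resulting numerical series $\sum (j+1)^{-k}$ converges (forcing $k\geq 2$) is compatible with the integrability hypothesis used in step one. The lemma thus follows directly from the definition of the Eulerian polynomials together with Proposition~\ref{prop: poles}.
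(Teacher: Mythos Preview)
Your argument is correct and follows exactly the paper's own proof: invoke Proposition~\ref{prop: poles} for integrability (you spell out the check $v_1+\cdots+v_n=n\geq N+1=n-k+2$ that the paper leaves implicit), then expand via~(\ref{eqdefE}) and integrate term by term to obtain~$\zeta(k)$. The only addition on your side is the explicit Tonelli justification, which the paper omits.
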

		
		\begin{proof}
		The first statement follows from Proposition~\ref{prop: poles}. The computation of the period is then straightforward using the definition (\ref{eqdefE}) of the Eulerian polynomials:
		$$\int_{\square^n}\omega_k^{(n)}=\sum_{j\geq 0}(j+1)^{n-k}\int_{[0,1]^n}(x_1\cdots x_n)^j\,dx_1\cdots dx_n=\sum_{j\geq 0}(j+1)^{-k}=\zeta(k)\ .$$
		\end{proof}	
		
		For every~$n\geq 0$, we define~$\omega_0^{(n)}=dx_1\cdots dx_n$; we also have the class~$[\omega^{(n)}_0]\in \Z_{n,\mathrm{dR}}$, whose pairing with the class~$[\square^n]$ is 
		$$\langle[\square^n],[\omega_0^{(n)}]\rangle=\int_{\square^n}\omega_0^{(n)}=1\ .$$
		
		We call the differential forms~$\omega_k^{(n)}$, for~$k=0,2,\ldots,n$, the \emph{Eulerian differential forms}.
	
	\subsection{An inductive system}\label{par: inductive}
	
		For~$n\geq 2$ there are natural morphisms
		\begin{equation}\label{eq: inductive}
		i^{(n)}:\Z^{(n-1)}\rightarrow \Z^{(n)}
		\end{equation}
		in the category~$\mathsf{MT}(\Q)$, that we now define. We fix the identification~$X_{n-1}=\{x_n=1\}\subset X_n$, which implies the equality~$A_{n-1}=A_n\cap X_{n-1}$. Let us set 
		$$B'_n=\bigcup_{1\leq i\leq n}\{x_i=0\}\cup\bigcup_{1\leq i\leq n-1}\{x_i=1\}\ ,$$
		so that we have~$B_n=B'_n\cup X_{n-1}$, and~$B_{n-1}=B'_n\cap X_{n-1}$.\\
		
		In the blow-up~$\td{X}_n$, we thus get an embedding~$\td{X}_{n-1}\subset \td{X}_n$ and identifications~$\td{A}_{n-1}=\td{A}_n\cap \td{X}_{n-1}$,~$\td{B}_{n-1}=\td{B}'_n\cap \td{X}_{n-1}$ and~$E_{n-1}=E_n\cap\td{X}_{n-1}$.
		Thus, the complex in~$\mathsf{DM}(\Q)$ that we have used to define~$\Z^{(n-1)}$ is the subcomplex
		\begin{equation}\label{eq: subcomplex of varieties}
		\cdots \rightarrow \bigsqcup_{\substack{|I|=3\\ \partial_I Y\subset \td{X}_{n-1}}}\partial_I Y \rightarrow \bigsqcup_{\substack{|I|=2\\ \partial_I Y\subset \td{X}_{n-1}}}\partial_I Y \rightarrow \td{X}_{n-1} \rightarrow 0 \rightarrow 0
		\end{equation}
		of the complex (\ref{eq: complex of varieties}) that we have used to define~$\Z^{(n)}$, shifted by~$1$. Taking the~$n$-th cohomology groups with respect to the~$t$-structure gives the morphism (\ref{eq: inductive}).
		
		In the Betti and the de Rham realizations, the morphism (\ref{eq: inductive}) is also induced by the inclusion of double subcomplexes of (\ref{eq: double complex Betti}) and (\ref{eq: double complex de Rham}).\\
		
		We define the ind-motive
		$$\Z=\lim_{\stackrel{\longrightarrow}{n}}\Z^{(n)}\ ,$$
		viewed as an ind-object in the category~$\mathsf{MT}(\Q)$, and simply call it the \emph{zeta motive}.\\
		
		The map~$i^{(n),\vee}_\B:\Z^{(n),\vee}_\B\rightarrow \Z^{(n-1),\vee}_\B$ given by the transpose of the Betti realization of~$i^{(n)}$ satisfies
		\begin{equation}\label{eq: inductive square}
		i^{(n),\vee}_\B([\square^{n-1}])=[\square^n]\ .
		\end{equation}
		More generally and loosely speaking, if~$\sigma$ is a chain on~$\td{X}_n(\C)-\td{A}_n(\C)$ whose boundary is on~$\td{B}_n(\C)\cup E_n(\C)$, then~$i^{(n),\vee}_\B([\sigma])$ is the class of \enquote{the component of the boundary of~$\sigma$ that lives on~$\td{X}_{n-1}(\C)$}. According to Proposition~\ref{prop: excision}, one can also work with chains on~$X_n(\C)-\accentset{\circ}{A}_n(\C)$. We note that (\ref{eq: inductive square}) allows us to define a class
		$$[\square] \in \Z_\B^\vee:=	\lim_{\stackrel{\longleftarrow}{n}}\Z^{(n),\vee}_\B\ .$$
		
		 \begin{rem}\label{rem: signs}
		There are (alternating) signs in the differentials of the complexes (\ref{eq: complex of varieties}), (\ref{eq: double complex Betti}), (\ref{eq: double complex de Rham}), that we leave to the reader. This also induces signs on the different components of the inclusions of subcomplexes such as (\ref{eq: subcomplex of varieties}); these signs are fixed once and for all by equation (\ref{eq: inductive square}).
		\end{rem}
		
		The next proposition shows that the Eulerian differential forms~$\omega_{k}^{(n)}$ are compatible with the inductive structure on the zeta motives.
		
		\begin{prop}\label{propdRrecurrence}
		For integers~$n\geq 2$ and~$k=0,2,\ldots,n-1$, the map~$i^{(n)}_\dR:\Z^{(n-1)}_\dR\rightarrow \Z^{(n)}_\dR$ sends the class~$[\omega_{k}^{(n-1)}]$ to the class~$[\omega_k^{(n)}]$.
		\end{prop}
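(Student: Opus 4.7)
The plan is to verify the identity at the level of the hypercohomology of the bicomplex (\ref{eq: double complex de Rham}) defining $\Z^{(n)}_\dR$. A class in total degree $n$ is represented by a tuple of forms, one in each position $(p, n-p)$, subject to compatibility under $d$ and the face restrictions. By direct inspection the cocycle with only nonzero entry $\pi_n^*\omega_k^{(n)}$ in position $(0, n)$ represents $[\omega_k^{(n)}]$: it is automatically closed since it is a top form, and its restriction to each hyperplane component of $\partial Y$ vanishes because $\omega_k^{(n)}$ contains all of $dx_1, \ldots, dx_n$, while its pullback to $E_n$ vanishes for the same reason after blow-up. Following the description of $i^{(n)}$ in \S\ref{par: inductive}, the image $i_\dR^{(n)}[\omega_k^{(n-1)}]$ is represented by the cocycle whose only nonzero entry is $\pi_{n-1}^*\omega_k^{(n-1)}$ placed on the $\td{X}_{n-1}$-summand of $\bigoplus_{|I|=1}\Omega^{n-1}_{\partial_I Y}$ in position $(1, n-1)$.

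To exhibit a homotopy between these two cocycles, I would introduce the primitive
$$\eta_k^{(n)} := \begin{cases} x_n\, dx_1 \cdots dx_{n-1} & \text{if } k = 0, \\[1ex] \displaystyle x_n\, \dfrac{E_{n-k-1}(x_1 \cdots x_n)}{(1 - x_1 \cdots x_n)^{n-k}}\, dx_1 \cdots dx_{n-1} & \text{if } 2 \leq k \leq n-1, \end{cases}$$
viewed as an algebraic $(n-1)$-form on $X_n - A_n$. Expanding in power series using (\ref{eqdefE}) one computes
$$\partial_{x_n}\eta_k^{(n)} = \sum_{j\geq 0}(j+1)^{n-k}(x_1\cdots x_n)^j\, dx_1\cdots dx_{n-1} = \frac{E_{n-k}(x_1\cdots x_n)}{(1-x_1\cdots x_n)^{n-k+1}}\, dx_1\cdots dx_{n-1},$$
so that $d\eta_k^{(n)} = (-1)^{n-1}\omega_k^{(n)}$ and $\eta_k^{(n)}|_{\{x_n=1\}} = \omega_k^{(n-1)}$. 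The restriction of $\eta_k^{(n)}$ to every other hyperplane component $\{x_i=0\}$ or $\{x_i=1\}$ with $i < n$ vanishes because $\eta_k^{(n)}$ contains $dx_i$ as a factor, while the restriction to $\{x_n=0\}$ vanishes thanks to the factor $x_n$.

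The one point requiring care is the behavior on the exceptional divisor $E_n$: one must check that $\pi_n^*\eta_k^{(n)}$ extends regularly across $E_n$ and restricts to zero there. Working in one of the $n$ affine charts of $\pi_n$, say $x_1 = 1-z_1$ and $x_i = 1-z_1 z_i$ for $i \geq 2$, a direct calculation yields
$$dx_1 \cdots dx_{n-1} = (-1)^{n-1}\, z_1^{n-2}\, dz_1 \cdots dz_{n-1}, \qquad 1 - x_1 \cdots x_n = z_1(1 + z_2 + \cdots + z_n) + O(z_1^2),$$
so that $\pi_n^*\eta_k^{(n)}$ has local form $z_1^{k-2} \cdot h(z)\, dz_1 \cdots dz_{n-1}$ with $h$ regular. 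For $k \geq 2$ this is holomorphic across $E_n$, and its pullback to $E_n = \{z_1=0\}$ vanishes because $dz_1$ is a factor; the case $k = 0$ is trivial since $\eta_0^{(n)}$ is polynomial. The remaining $n-1$ charts are treated identically by symmetry.

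Placing $\eta_k^{(n)}$ in position $(0, n-1)$ of the total complex, its coboundary then has component $(-1)^{n-1}\omega_k^{(n)}$ in position $(0, n)$ and component $\pm \omega_k^{(n-1)}$ supported on the $\td{X}_{n-1}$-summand in position $(1, n-1)$, with all other entries zero. Reconciling these signs with those fixed in Remark \ref{rem: signs} produces exactly the difference of the two cocycles above, proving $[\omega_k^{(n)}] = i_\dR^{(n)}[\omega_k^{(n-1)}]$ in $\Z^{(n)}_\dR$. The main difficulty is the guess of the primitive itself: the Eulerian polynomial structure is precisely what simultaneously yields the correct derivative, the compatible restriction to $\td{X}_{n-1}$, and the vanishing of the pole of order $n-k$ against the zero of order $n-2$ coming from the blow-up Jacobian, ensuring regularity across $E_n$.
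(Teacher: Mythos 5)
Your proof is correct and follows essentially the same route as the paper: both introduce the identical primitive $\eta_k^{(n)}=x_n E_{n-1-k}(x_1\cdots x_n)(1-x_1\cdots x_n)^{-(n-k)}\,dx_1\cdots dx_{n-1}$ and read off $i^{(n)}_\dR[\omega_k^{(n-1)}]=(-1)^{n-1}[d\eta_k^{(n)}]=[\omega_k^{(n)}]$ from the double complex. The only cosmetic differences are that you compute $d\eta_k^{(n)}$ directly from the generating function (\ref{eqdefE}) rather than via the recurrence (\ref{eqrecurrenceE}), and that you carry out the blow-up chart computation explicitly where the paper appeals to the proof of Proposition~\ref{prop: poles} and leaves the details to the reader.
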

		
		\begin{proof}
		Since all the differential forms that we are manipulating have no poles along the exceptional divisors~$E_{n-1}$ and~$E_n$, it is safe to do the computations in the affine spaces~$X_{n-1}$ and~$X_n$; we leave it to the reader to turn them into computations in~$\td{X}_{n-1}$ and~$\td{X}_n$ by working in local charts as in the proof of Proposition~\ref{prop: poles}. Let us first assume that~$k\in\{2,\ldots,n-1\}$. We put
		$$\eta^{(n-1)}_{k}=\dfrac{x_{n}E_{n-1-k}(x_1\cdots x_{n})}{(1-x_1\cdots x_{n})^{n-k}}\, dx_1\cdots dx_{n-1}\ ,$$
		viewed as a form on~$X_{n}$. Then we have~$(\eta^{(n-1)}_{k})_{|X_{n-1}}=\omega^{(n-1)}_{k}$ and~$(\eta^{(n-1)}_{k})_{|B'_{n-1}}=0$. A diagram chase in the double complex (\ref{eq: double complex de Rham}) shows that~$i^{(n)}_\dR([\omega^{(n-1)}_{k}])$ is the class of~$$(-1)^{n-1}\,(d(\eta^{(n-1)}_{k}))$$ 
		(the sign is here to be consistent with the Betti version, see Remark~\ref{rem: signs}). We have
		$$(-1)^{n-1}\, d(\eta^{(n-1)}_{k})=\frac{\partial}{\partial x_{n}}\left(\dfrac{x_{n}E_{n-1-k}(x_1\cdots x_{n})}{(1-x_1\cdots x_{n})^{n-k}}\right) dx_1\cdots dx_{n}~$$
		and one easily sees that setting~$x=x_1\cdots x_{n}$ we have
		$$\frac{\partial}{\partial x_{n}}\left(\dfrac{x_{n}E_{n-1-k}(x_1\cdots x_{n})}{(1-x_1\cdots x_{n})^{n-k}}\right)=\dfrac{x(1-x)E'_{n-1-k}(x)+(1+(n-1-k)x)E_{n-1-k}(x)}{(1-x)^{n-k+1}}\ .$$
		Using the recurrence relation (\ref{eqrecurrenceE}), one then concludes that
		$$(-1)^{n-1}d(\eta_{k}^{(n-1)})=\dfrac{E_{n-k}(x_1\cdots x_{n})}{(1-x_1\cdots x_{n})^{n-k+1}}\, dx_1\cdots dx_{n}=\omega_{k}^{(n)}.$$
		For~$k=0$, this is the same computation with~$\eta^{(n)}_0=x_{n}\, dx_1\cdots dx_{n-1}$ and 
		$$(-1)^{n-1}d(\eta^{(n-1)}_{0})=dx_1\cdots dx_{n}=\omega_{0}^{(n)}.$$
		\end{proof}
		
		Proposition~\ref{propdRrecurrence} allows us to unambiguously define classes 
		$$[\omega_k]\in \Z_{\dR}$$
		for~$k=0,2,3,\ldots$, whose pairing with the class~$[\square]\in\Z_\B^\vee$ is~$$\langle[\square],[\omega_0]\rangle=1 \;\; \textnormal{ and } \;\; \langle[\square],[\omega_k]\rangle=\zeta(k)\;\; (k\geq 2)\ .$$
		
		\begin{rem}
		The proof of Proposition~\ref{propdRrecurrence} can be thought of as a cohomological version of the relation
		$$\int_{\square^n}\omega_{k}^{(n)}=\int_{\square^{n-1}}\omega_{k}^{(n-1)},$$
		which may be proved using Stokes's theorem and the recurrence relation (\ref{eqrecurrenceE}). 
		\end{rem}
		
		\begin{prop}\label{prop: eulerian forms weights}
		For integers~$n\geq 1$ and~$k=0,2,\ldots,n$, the class~$[\omega_k^{(n)}]$ lives in the pure weight~$2k$ component of~$\Z^{(n)}_\dR$.
		\end{prop}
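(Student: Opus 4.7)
My approach is an induction on $n \geq k$, leveraging Proposition~\ref{propdRrecurrence}. Since $i^{(n)}\colon \Z^{(n-1)} \to \Z^{(n)}$ is a morphism in $\mathsf{MT}(\Q)$, its de Rham realization $i^{(n)}_\dR$ is strictly compatible with the weight grading. Combined with the identity $i^{(n)}_\dR([\omega_k^{(n-1)}]) = [\omega_k^{(n)}]$, this propagates the assertion ``$[\omega_k^{(n)}]$ is in pure weight $2k$'' from $n-1$ to $n$ for any $k \leq n-1$. It therefore suffices to treat the base cases: $n = 1$ when $k = 0$, and $n = k$ when $k \geq 2$.

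The base case $k = 0$ is immediate, since $\Z^{(1)} = H^1(\mathbb{A}^1_\Q, \{0, 1\})$ is canonically isomorphic to $\Q(0)$, so $\Z^{(1)}_\dR$ is one-dimensional in pure weight $0$, generated by $[dx_1] = [\omega_0^{(1)}]$.

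For the base case $k \geq 2$ with $n = k$, I would analyze the form $\omega_k^{(k)} = dx_1\cdots dx_k/(1-x_1\cdots x_k)$, which has a simple pole along $\td{A}_k$ and no other poles on $\td{X}_k$. I would compute its Poincar\'{e} residue along $\td{A}_k$: on the open part $A_k - P_k \subset \td{A}_k$, under the parametrization of $\mathbb{G}_m^{k-1}$ sending $(x_1,\ldots,x_{k-1})$ to $(x_1,\ldots,x_{k-1},(x_1\cdots x_{k-1})^{-1})$, the residue becomes (up to sign) the canonical invariant top form $dx_1\cdots dx_{k-1}/(x_1\cdots x_{k-1})$. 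This represents a generator of $H^{k-1}_\dR(\mathbb{G}_m^{k-1})$, which is pure of weight $2(k-1)$. Since the Gysin/residue morphism associated to the closed immersion $\td{A}_k \hookrightarrow \td{X}_k$ shifts weights by $2$ (Tate twist by $(-1)$), one concludes that $[\omega_k^{(k)}]$ lives in pure weight $2k$ in $\Z^{(k)}_\dR$.

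The most delicate part will be adapting this Gysin identification to the specific relative cohomology defining $\Z^{(k)}$ rather than $H^k_\dR(\td{X}_k - \td{A}_k)$ alone: one must track how the residue interacts with the boundary $\td{B}_k \cup E_k$, for instance by performing the computation in local charts of $\td{X}_k$ around $\td{A}_k$ (as in the proof of Proposition~\ref{prop: poles}) and lifting the residue class to a closed element of the total de Rham complex associated to~\eqref{eq: double complex de Rham} whose image is $[\omega_k^{(k)}]$. Equivalently, one can appeal to the general principle that residue morphisms in Voevodsky's triangulated category of motives shift weights by $2$, thereby transferring the pure weight $2(k-1)$ assertion for the residue on $\td{A}_k$ to the desired pure weight $2k$ assertion on $\Z^{(k)}$.
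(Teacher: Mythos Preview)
Your inductive reduction via Proposition~\ref{propdRrecurrence} and the treatment of $k=0$ match the paper exactly. The divergence, and the gap, is in the base case $n=k\geq 2$.

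The residue argument as stated does not yield purity. The residue map is an \emph{outgoing} morphism of graded vector spaces, so knowing that $\mathrm{Res}(\omega_k^{(k)})$ lands in pure weight $2(k-1)$ only tells you that the components of $[\omega_k^{(k)}]$ in weights other than $2k$ lie in the kernel of the residue map. That kernel is (eventually, by the later \S\ref{section4}) $\Q(0)$, concentrated in weight $0$, and nothing in your argument excludes a nonzero weight-$0$ component of $[\omega_k^{(k)}]$. The slogan ``Gysin shifts weights by $2$'' describes what the map does on each graded piece; it does not say that a preimage of a pure class is itself pure. Your final paragraph speaks of ``lifting the residue class to a closed element whose image is $[\omega_k^{(k)}]$'', but $[\omega_k^{(k)}]$ is already one specific lift, and the entire question is whether it coincides with the canonical weight-$2k$ lift.

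The paper avoids this by factoring through an \emph{incoming} morphism. It rewrites $\omega_n^{(n)}=-\mathrm{dlog}(1-x_1\cdots x_n)\wedge\mathrm{dlog}(x_2)\wedge\cdots\wedge\mathrm{dlog}(x_n)$ and linearises by introducing an extra coordinate $t$, viewing $X_n$ as the graph $\{t=1-x_1\cdots x_n\}$ inside affine $(n{+}1)$-space $Y_n$. In the resulting pair $(Y_n-\{t=0\},D_n)$ the divisor is genuinely normal crossing, and the form $\eta=-\mathrm{dlog}(t)\wedge\mathrm{dlog}(x_2)\wedge\cdots\wedge\mathrm{dlog}(x_n)$ is a top-degree logarithmic form on the compactification $(\mathbb{P}^1)^{n+1}$; Deligne's description of the Hodge filtration via log forms places $[\eta]$ in $F^n$, hence in pure weight $2n$. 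Since $[\omega_n^{(n)}]$ is the image of $[\eta]$ under a morphism in $\mathsf{MT}(\Q)$, purity is inherited. Your approach could be repaired by adding the Hodge-filtration observation that $[\omega_k^{(k)}]\in F^k\Z^{(k)}_\dR$ (using $F^k=\bigoplus_{j\geq k}(\Z^{(k)}_\dR)_{2j}$ for mixed Hodge--Tate structures, together with the a priori bound on weights), but that is essentially the paper's mechanism rather than the residue one.
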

		
		\begin{proof}
		For~$k=0$, Proposition~\ref{propdRrecurrence} and the fact that the maps~$i^{(n)}_\dR$ are compatible with the weight gradings implies that it is enough to do the proof for~$n=1$; this case is easy since~$\Z^{(1)}\cong\Q(0)$ only has weight~$0$. We now turn to the case~$k=2,\ldots,n$. Thanks to Proposition~\ref{propdRrecurrence} and the fact that the maps~$i^{(n)}_\dR$ are compatible with the weight gradings, it is enough to check it for~$k=n$.
		
		By (\ref{eq: splitting W}), we need to prove that the class of~$\omega_n^{(n)}$ is in~$F^n\Z^{(n)}_\dR$. Let~$Y$ be a smooth projective variety of dimension~$n$,~$D$ be a normal crossing divisor inside~$Y$, and~$Z$ be a closed subvariety of~$Y$ of dimension less than or equal to $n-1$. Then we have 
		$$F^nH^n_\dR(Y-D,Z-Z\cap D) = \mathrm{Im}\left(H^0(\Omega^n_Y(\log D))\longrightarrow H^n_\dR(Y-D,Z-Z\cap D)\right)\ .$$
		Thus, it is enough to prove that there is a compactification~$Y$ of~$\widetilde{X}_n-\widetilde{A}_n$ such that~$Y-(\widetilde{X}_n-\widetilde{A}_n)$ is a normal crossing divisor~$D$, and such that~$\omega_n^{(n)}$ has logarithmic singularities along~$D$. Since~$\omega_n^{(n)}$ does not have poles along~$E_n$, we can work on~$X_n-A_n$ instead. Let us start with~$Y_1=(\mathbb{P}^1)^n$ with coordinates~$((x_1:y_1),\ldots,(x_n:y_n))$, and~$D_1$ the divisor given by the union of the subvarieties~$\{y_i=0\}$ for~$i=1,\ldots, n$, and the subvariety~$\{x_1\cdots x_n=y_1\cdots y_n\}$ (this is the closure of~$A_n$). This is not enough since $D_1$ is not a normal crossing divisor. We then finish thanks to the following lemma.
		\end{proof}
		
		\begin{lem}
		Let~$\phi:Y\rightarrow Y_1$ be the iterated blow-up of all the codimension-$2$ subvarieties~$Z_{i,j}=\{y_i=x_j=0\}$,~$i\neq j$, in any order. Then~$D=\phi^{-1}(D_1)$ is a normal crossing divisor inside~$Y$ and~$\phi^*(\omega^{(n)}_n)$ has logarithmic singularities along~$D$.
		\end{lem}
		
		\begin{proof}
		This is checked locally on the standard affine cover of~$Y$, consisting of~$2^n$ affine spaces. By symmetry of the variables, it is enough to look at the charts~$U_r$ with affine coordinates
		$$(y_1,\ldots,y_r,x_{r+1},\ldots,x_n)\ ,$$
		for~$r=0,\ldots,n$. We note that in the chart~$U_r$ the divisor~$D_1$ is the union of the subvarieties~$\{y_i=0\}$ for~$i=1,\ldots,r$, and the subvariety~$\{y_1\cdots y_r=x_{r+1}\cdots x_n\}$. In that chart the differential form that we are looking at is (up to a sign)
		$$\omega=\dfrac{dy_1\cdots dy_r \, dx_{r+1}\cdots dx_n}{y_1\cdots y_r(y_1\cdots y_r-x_{r+1}\cdots x_n)}\cdot$$
		We proceed by induction on~$r$. For~$r=0$,~$D_1$ only consists of~$\{x_1\cdots x_n=1\}$, which is a normal crossing divisor, and~$\omega$ has logarithmic singularities along~$D_1$. The subvarieties~$Z_{i,j}$ do not intersect~$U_0$, so the blow-ups do not change anything. For a given~$r=1,\ldots,n$, let us look at the blow-up of~$Z_{1,n}=\{y_1=x_n=0\}$ (this is enough for reasons of symmetry) in the chart~$U_r$. There are two natural affine charts~$\mathbb{A}^n\rightarrow U_r$ for the blow-up.
		\begin{enumerate}
		\item On the first chart, the blow-up map is given by 
		$$(y_1,\ldots,y_r,x_{r+1},\ldots,x_n)=(v_1,\ldots,v_r,u_{r+1},\ldots,u_{n-1},v_1u_n)\ .$$
		The preimage of~$D_1$ consists of the subvarieties~$\{v_i=0\}$ for~$i=1,\ldots,r$, and the subvariety~$\{v_2\cdots v_r=u_{r+1}\cdots u_n\}$. The pullback of~$\omega$ is
		$$\widetilde{\omega}=\frac{dv_1\cdots dv_r\, du_{r+1}\cdots du_n}{v_1\cdots v_r(v_2\cdots v_r-u_{r+1}\cdots u_n)}= \frac{dv_1}{v_1}\wedge\omega' \;\textnormal{ with }\; \omega'=\frac{dv_2\cdots dv_r\, du_{r+1}\cdots du_n}{v_2\cdots v_r(v_2\cdots v_r-u_{r+1}\cdots u_n)}\cdot$$
		We note that~$\{v_1=0\}$ is the exceptional divisor and that the total transforms of the subvarieties~$Z_{1,j}$ are empty in this chart. By the induction hypothesis (with $n$ replaced by $n-1$), the pullback of $\omega'$ by the successive blow-up of the subvarieties $Z_{i,j}$ with $i\neq 1$ has logarithmic singularities. Since $dv_1/v_1$ has logarithmic singularities along $\{v_1=0\}$, we are done.
		\item On the second chart, the blow-up map is given by 
		$$(y_1,\ldots,y_r,x_{r+1},\ldots,x_n)=(v_1u_n,v_2,\ldots,v_r,u_{r+1},\ldots,u_n)\ .$$
		The same argument as in the first chart applies.
		\end{enumerate}
		\end{proof}

	\subsection{A long exact sequence}
		
		We now show that the morphism~$i^{(n)}:\Z^{(n-1)}\rightarrow\Z^{(n)}$ fits into a long exact sequence. We first define objects of~$\mathsf{MT}(\Q)$:
		$$\Z^{(n),r}=H^r(\td{X}_n - \td{A}_n,(\td{B}_n\cup E_n) - (\td{B}_n\cup E_n)\cap\td{A}_n)$$
		and
		$${}'\Z^{(n),r}=H^r(\td{X}_n - \td{A}_n,(\td{B}'_n\cup E_n) - (\td{B}'_n\cup E_n)\cap\td{A}_n) \ ,$$
		so that~$\Z^{(n)}=\Z^{(n),n}$. We leave it to the reader to fill in the technical definitions of these objects by mimicking that of~$\Z^{(n)}$ from \S\ref{par: def Z}.
		
		\begin{prop}\label{prop: long exact sequence Z prime}
		For~$n\geq 2$, we have a long exact sequence in~$\mathsf{MT}(\Q)$:
		\begin{equation}\label{eq: long exact sequence Z}
		\cdots \rightarrow \Z^{(n-1),r-1} \rightarrow \Z^{(n),r} \rightarrow {}'\Z^{(n),r} \rightarrow  \Z^{(n-1),r} \rightarrow \Z^{(n),r+1} \rightarrow\cdots
		\end{equation}
		\end{prop}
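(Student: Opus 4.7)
The plan is to realize~(\ref{eq: long exact sequence Z}) as the cohomology long exact sequence of a distinguished triangle in~$\mathsf{DMT}(\Q)$, the motivic avatar of the classical long exact sequence of the triple of topological spaces~$\partial'Y_n\subset\partial Y_n\subset Y_n$, where~$Y_n=\td X_n-\td A_n$ and~$\partial Y_n$,~$\partial'Y_n$ are the boundary strata used to define~$\Z^{(n),\bullet}$ and~${}'\Z^{(n),\bullet}$, respectively. The geometric input is that~$\partial Y_n$ is obtained from~$\partial'Y_n$ by adjoining the single extra component~$\td X_{n-1}-\td A_{n-1}=Y_{n-1}$.

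At the level of the complexes of Tate motives dual to~(\ref{eq: complex of varieties}), I would split each term~$\bigoplus_{|I|=k}M(\partial_I Y_n)$ of the complex~$C^{(n),\bullet}$ computing~$\Z^{(n),\bullet}$ according to whether the index set~$I$ contains the distinguished irreducible component~$\td X_{n-1}\subset\partial Y_n$. The~$I$'s not containing~$\td X_{n-1}$ form the complex~$C'^{(n),\bullet}$ computing~${}'\Z^{(n),\bullet}$, whereas the~$I=\{\td X_{n-1}\}\sqcup J$ containing it assemble, via the identifications~$\td A_{n-1}=\td X_{n-1}\cap\td A_n$,~$\td B_{n-1}=\td X_{n-1}\cap\td B'_n$,~$E_{n-1}=\td X_{n-1}\cap E_n$ recalled in~\S\ref{par: inductive}, into a shifted copy~$C^{(n-1),\bullet}[-1]$ of the complex computing~$\Z^{(n-1),\bullet}$. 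Since the restriction differential~$I\mapsto I\cup\{i\}$ only enlarges~$I$, the subspaces indexed by~$I\ni\td X_{n-1}$ are stable under the differential, which produces a short exact sequence of complexes
$$0\longrightarrow C^{(n-1),\bullet}[-1]\longrightarrow C^{(n),\bullet}\longrightarrow C'^{(n),\bullet}\longrightarrow 0\ ,$$
and hence a distinguished triangle in~$\mathsf{DMT}(\Q)$. Applying the cohomology functor attached to the~$t$-structure on~$\mathsf{DMT}(\Q)$ whose heart is~$\mathsf{MT}(\Q)$~\cite{levinetatemotives}, and using the identification~$H^r(C^{(n-1),\bullet}[-1])=\Z^{(n-1),r-1}$, then produces exactly the sequence~(\ref{eq: long exact sequence Z}).

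The main obstacle is combinatorial bookkeeping: one has to fix the grading shift arising from adjoining~$\td X_{n-1}$ to the index set as well as the alternating signs alluded to in Remark~\ref{rem: signs}, and then verify that the motivic connecting homomorphism produced by the~$t$-structure agrees with the geometrically expected one~${}'\Z^{(n),r}\rightarrow\Z^{(n-1),r}$. As a consistency check, after applying the Betti realization the whole construction reduces to the classical long exact sequence of the triple~$(Y_n(\C),\partial Y_n(\C),\partial'Y_n(\C))$ in singular cohomology, the required isomorphism~$H^r(\partial Y_n(\C),\partial'Y_n(\C))\cong\Z_\B^{(n-1),r}$ being standard excision along the open complement~$\partial Y_n\setminus\partial'Y_n=Y_{n-1}$.
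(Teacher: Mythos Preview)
Your proposal is correct and follows essentially the same route as the paper: one exhibits the distinguished triangle $C^{(n-1)}[-1]\to C^{(n)}\to {}'C^{(n)}\stackrel{+1}{\to}$ in~$\mathsf{DMT}(\Q)$ by splitting the index sets~$I$ according to whether they contain the component~$\td X_{n-1}$, and then takes cohomology with respect to the~$t$-structure. Your account is in fact more detailed than the paper's (which simply calls the triangle ``obvious''), and your Betti consistency check via the long exact sequence of the triple is a useful sanity check that the paper omits.
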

		
		\begin{proof}
		The objects~$\Z^{(n-1),\bullet}$,~$\Z^{(n),\bullet}$ and~${}'\Z^{(n),\bullet}$ are defined via objects in~$\mathsf{DMT}(\Q)$ that we denote by~$C^{(n-1)}$,~$C^{(n)}$ and~${}'C^{(n)}$ respectively,~$C^{(n)}$ being the complex (\ref{eq: complex of varieties}) and~$C^{(n-1)}$ the subcomplex (\ref{eq: subcomplex of varieties}). Now there is an obvious exact triangle
		$$C^{(n-1)}[-1]\longrightarrow C^{(n)} \longrightarrow {}'C^{(n)} \stackrel{+1}{\longrightarrow}\ ,$$
		in~$\mathsf{DMT}(\Q)$, which gives the desired long exact sequence after taking the cohomology with respect to the~$t$-structure.
		\end{proof}
		
		We note that the map~$\Z^{(n-1),n-1}\rightarrow \Z^{(n),n}$ in the long exact sequence \eqref{eq: long exact sequence Z} is exactly~$i^{(n)}$.

\section{Computation of the zeta motives~$\Z^{(n)}$}\label{section4}

	This section is the technical heart of this article, where we compute (Theorem~\ref{thm: period matrix Z}) the full period matrix of the zeta motives~$\Z^{(n)}$. The main difficulty is showing that the motives~$\T^{(n)}$, introduced below, are semi-simple. For that we use the involution~$\tau$ defined in the introduction and the computation of the extension groups in the category~$\mathsf{MT}(\Q)$. We then define the odd zeta motive and compute its period matrix. We finish with an elementary (Hodge-theoretic) proof that the motives~$\T^{(n)}$ are semi-simple.

	\subsection{The Gysin long exact sequence}
	
		Since the divisor~$A_n$ is smooth, it is natural to decompose the motives~$\Z^{(n),r}$ thanks to a Gysin long exact sequence. In the next proposition, the definition of the objects~$H^\bullet(X_n,B_n)$ and~$H^\bullet(A_n,B_n\cap A_n)$ of~$\mathsf{MT}(\Q)$ is similar to that of~$\Z^{(n)}$ from \S\ref{par: def Z}.
	
		\begin{prop}\label{prop: Gysin long exact sequence}
		For~$n\geq 1$, we have a long exact sequence in~$\mathsf{MT}(\Q)$:
		\begin{equation}\label{eq: Gysin long exact sequence}
		\cdots \rightarrow H^r(X_n,B_n) \rightarrow \Z^{(n),r} \rightarrow H^{r-1}(A_n,B_n\cap A_n)(-1) \rightarrow H^{r+1}(X_n,B_n) \rightarrow \Z^{(n),r+1} \rightarrow\cdots 
		\end{equation}
		\end{prop}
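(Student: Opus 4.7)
The proof follows the same strategy as Proposition~\ref{prop: long exact sequence Z prime}: I would construct an appropriate exact triangle in $\mathsf{DMT}(\mathbb{Q})$ whose long exact sequence of cohomology with respect to the motivic $t$-structure yields the claimed sequence in $\mathsf{MT}(\mathbb{Q})$. The strategy naturally splits into two steps: (a) a Gysin long exact sequence for the smooth divisor $\tilde{A}_n$ inside the blow-up $\tilde{X}_n$, and (b) a blow-up invariance statement that replaces cohomologies on $(\tilde{X}_n, \tilde{B}_n\cup E_n)$ and $(\tilde{A}_n, (\tilde{B}_n\cup E_n)\cap\tilde{A}_n)$ by the corresponding cohomologies on $(X_n, B_n)$ and $(A_n, B_n\cap A_n)$.

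For step (a), I would exploit that $\tilde{A}_n \cup \tilde{B}_n\cup E_n$ is a simple normal crossing divisor in $\tilde{X}_n$, so that the strata appearing in the complex \eqref{eq: complex of varieties} split cleanly into those contained in $\tilde{A}_n$ and those meeting $\tilde{A}_n$ transversely. The standard Gysin exact triangle for a codimension-one smooth closed immersion (which contributes a Tate twist $\mathbb{Q}(-1)[-2]$), applied stratum by stratum, assembles into an exact triangle of motivic complexes in $\mathsf{DMT}(\mathbb{Q})$:
$$C_A(-1)[-2] \longrightarrow C_X \longrightarrow C_Z \xrightarrow{+1}$$
where $C_X$, $C_A$, $C_Z$ compute respectively $H^\bullet(\tilde{X}_n, \tilde{B}_n\cup E_n)$, $H^\bullet(\tilde{A}_n, (\tilde{B}_n\cup E_n)\cap\tilde{A}_n)$, and $\mathcal{Z}^{(n),\bullet}$. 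Taking cohomology gives a long exact sequence whose terms are exactly $H^r(\tilde{X}_n, \tilde{B}_n \cup E_n)$, $\mathcal{Z}^{(n),r}$, and $H^{r-1}(\tilde{A}_n, (\tilde{B}_n\cup E_n)\cap\tilde{A}_n)(-1)$.

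For step (b), the key observation is that $P_n \in B_n$ (since $P_n=(1,\dots,1)$ lies on every hyperplane $\{x_i=1\}$) and $P_n \in A_n$, so that $\pi_n^{-1}(B_n) = \tilde{B}_n \cup E_n$ set-theoretically and $\pi_n|_{\tilde{A}_n}$ is the blow-up of $A_n$ at the point $P_n \in A_n \cap B_n$. Since $\pi_n$ is a proper birational isomorphism away from $P_n$, I would deduce canonical isomorphisms
$$H^r(\tilde{X}_n, \tilde{B}_n\cup E_n) \cong H^r(X_n, B_n) \quad \text{and} \quad H^{r-1}(\tilde{A}_n, (\tilde{B}_n\cup E_n)\cap\tilde{A}_n) \cong H^{r-1}(A_n, B_n\cap A_n)$$
in $\mathsf{MT}(\mathbb{Q})$. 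Substituting these into the long exact sequence from step (a) yields \eqref{eq: Gysin long exact sequence}.

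The main obstacle is the blow-up invariance in step (b) at the motivic level. On Betti and de Rham realizations it is immediate from excision, together with the equality of complements $\tilde{X}_n \setminus \pi_n^{-1}(B_n) = X_n \setminus B_n$ (and similarly on $A_n$). In $\mathsf{DMT}(\mathbb{Q})$ it follows from Voevodsky's blow-up exact triangle~\cite{voevodskytriangulated} applied to the stratum $P_n$: the contributions of the exceptional divisor $E_n$ and of the blown-up point $P_n$ cancel precisely because both have been absorbed into the boundary of the relative pair. A careful bookkeeping of the resulting quasi-isomorphism of motivic complexes, compatible with the Gysin triangle of step (a), completes the proof.
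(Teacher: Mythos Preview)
Your proposal is correct and follows essentially the same approach as the paper: apply the Gysin triangle for the smooth divisor $\td{A}_n$ stratum by stratum to the complex~\eqref{eq: complex of varieties}, take cohomology for the $t$-structure, and then identify the resulting relative cohomologies on the blow-up with those on~$X_n$ and~$A_n$. The only difference is in step~(b): the paper dispatches the blow-up invariance more cheaply by observing that the natural maps $H^r(\td{X}_n,\td{B}_n\cup E_n)\to H^r(X_n,B_n)$ and $H^{r-1}(\td{A}_n,(\td{B}_n\cup E_n)\cap\td{A}_n)\to H^{r-1}(A_n,B_n\cap A_n)$ can be checked to be isomorphisms in the Betti realization (via excision, as in Proposition~\ref{prop: excision}), which suffices since the realization functor is exact and conservative. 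Your alternative via Voevodsky's blow-up triangle is valid but heavier than needed.
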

		
		\begin{proof}
		Recall from~\cite[(3.5.4)]{voevodskytriangulated} the existence of a Gysin exact triangle in the category~$\mathsf{DM}(\Q)$. For the pair~$(\td{X}_n,\td{A}_n)$, it reads (with cohomological conventions)
		$$\td{X}_n\longrightarrow \td{X}_n-\td{A}_n \longrightarrow \td{A}_n(-1)[-1] \stackrel{+1}{\longrightarrow}$$
		and is an exact triangle in the category~$\mathsf{DMT}(\Q)$. Applying this triangle to every pair~$(\partial_I Y,\partial_I Y\cap\td{A}_n)$ in the complex (\ref{eq: complex of varieties}) and taking the cohomology with respect to the~$t$-structure leads to a long exact sequence
		$$\cdots\rightarrow H^r(\td{X}_n,\td{B}_n\cup E_n) \rightarrow H^r(\td{X}_n - \td{A}_n,(\td{B}_n\cup E_n) - \td{A}_n) \rightarrow H^{r-1}(\td{A}_n,(\td{B}_n\cup E_n)\cap \td{A}_n)(-1)  \rightarrow\cdots~$$
		in~$\mathsf{MT}(\Q)$. One finishes with the fact that the natural morphisms
		$$H^r(\td{X}_n,\td{B}_n\cup E_n)\rightarrow H^r(X_n,B_n) \;\;\textnormal{ and }\;\; H^{r-1}(\td{A}_n,(\td{B}_n\cup E_n)\cap \td{A}_n)\rightarrow H^{r-1}(A_n,B_n\cap A_n)$$
		are isomorphisms. This can be checked in the Betti realization (see Remark~\ref{rem: conservativity}), where it is a consequence of the excision theorem as in the proof of Proposition~\ref{prop: excision}.
		\end{proof}

	\subsection{The motives~$H^\bullet(X_n,B_n)$}
		
		The computation of the motives~$H^\bullet(X_n,B_n)$ appearing in the long exact sequence (\ref{eq: Gysin long exact sequence}) is relatively easy.	
	
		\begin{prop}\label{prop: computation HXB}
		\begin{enumerate}
		\item We have~$H^r(X_n,B_n)=0$ for~$r\neq n$, and an isomorphism~$H^n(X_n,B_n)\cong \Q(0)$.
		\item A basis for the de Rham realization~$H^n_\dR(X_n,B_n)$ is the class of the form~$dx_1\cdots dx_n$.
		\item A basis for the Betti realization~$H_n^\B(X_n,B_n)$ is the class of the unit~$n$-cube~$\square^n=[0,1]^n$.
		\end{enumerate}
		\end{prop}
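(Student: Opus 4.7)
The plan is to reduce everything to the case $n=1$ via a Künneth-type argument, since the pair $(X_n,B_n)$ is the $n$-fold Cartesian product of $(X_1,B_1)=(\mathbb{A}^1_\Q,\{0,1\})$ with itself: explicitly, writing $p_i:X_n\to X_1$ for the $i$-th projection, we have $B_n=\bigcup_{i=1}^n p_i^{-1}(B_1)$. Since $\mathsf{MT}(\Q)$ is tannakian and the underlying geometric pairs fit into a Künneth exact triangle in $\mathsf{DMT}(\Q)$ (whose realizations recover the ordinary Künneth formulas for relative Betti cohomology and algebraic de Rham cohomology of pairs of smooth varieties), one obtains an isomorphism
$$H^\bullet(X_n,B_n)\;\cong\;H^\bullet(X_1,B_1)^{\otimes n}$$
in $\mathsf{MT}(\Q)$, compatible with the Betti and de Rham realizations and with their natural bases.

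It then suffices to treat the case $n=1$. Here $H^r(X_1,B_1)=H^r(\mathbb{A}^1_\Q,\{0,1\})$ fits into a long exact sequence in $\mathsf{MT}(\Q)$ obtained from the open--closed decomposition, with $H^\bullet(\mathbb{A}^1_\Q)=\Q(0)$ concentrated in degree $0$ and $H^\bullet(\{0,1\})=\Q(0)^{\oplus 2}$ also concentrated in degree $0$; the restriction map is the diagonal $\Q(0)\hookrightarrow \Q(0)^{\oplus 2}$, so its cokernel is $\Q(0)$, giving $H^r(X_1,B_1)=0$ for $r\neq 1$ and $H^1(X_1,B_1)\cong \Q(0)$. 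The class $[dx]$ is a nonzero element of the one-dimensional de Rham realization, and the class $[[0,1]]$ is a nonzero element of the dual of the one-dimensional Betti realization, because their pairing $\int_0^1 dx=1$ is nonzero; thus each is a basis, and this pairing also confirms that the weight is $0$, i.e.\ the motive is $\Q(0)$ and not a Tate twist of it.

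Returning to general $n$, the Künneth isomorphism transports the basis $[dx]^{\otimes n}$ of $H^n_\dR(X_1,B_1)^{\otimes n}$ to the class of $dx_1\cdots dx_n$, and dually the basis $[[0,1]]^{\otimes n}$ to the class of $[0,1]^n$. Since $\Q(0)^{\otimes n}=\Q(0)$, parts (1), (2) and (3) all follow simultaneously, with the consistency check $\langle [\,[0,1]^n],[dx_1\cdots dx_n]\rangle=1$ confirming that the resulting pure Tate motive is indeed $\Q(0)$.

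The only non-routine point is the motivic Künneth formula for these relative cohomology objects; once this is granted, everything reduces to the trivial one-dimensional computation, and the Betti and de Rham assertions about $[0,1]^n$ and $dx_1\cdots dx_n$ are automatic from the Künneth isomorphism at the level of realizations.
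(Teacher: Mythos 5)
Your proposal is correct and follows essentially the same route as the paper: reduce to $n=1$ via the relative K\"unneth isomorphism $H^\bullet(X_n,B_n)\cong H^\bullet(X_1,B_1)^{\otimes n}$, then read off $H^1(\mathbb{A}^1_\Q,\{0,1\})\cong\Q(0)$ from the long exact sequence of the pair and identify the bases using the nonzero pairing $\int_0^1 dx=1$. The only difference is that you spell out the cokernel computation and the basis-transport step in more detail than the paper's terse version.
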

		
		\begin{proof}
		By the relative K\"{u}nneth formula we have~$H^\bullet(X_n,B_n)\cong H^\bullet(X_1,B_1)^{\otimes n}$ so that it is enough to prove the proposition for~$n=1$. This has already been done in Remark~\ref{rem: Z1Q0}.
		\end{proof}
		
	
	\subsection{The motives~$H^\bullet(A_n,B_n\cap A_n)$}
	
		For~$n\geq 1$, we realize the~$n$-torus as~$T^n=\{x_1\cdots x_{n+1}=1\}$, and we have subtori~$T^{n-1}_i=\{x_i=1\}\subset T^n$ for~$i=1,\ldots,n+1$. We define
		$$\T^{(n),r}=H^r(T^n,\textstyle\bigcup_{1\leq i\leq n+1}T^{n-1}_i) \;\; \textnormal{ and } \;\; {}'\T^{(n),r}=H^r(T^n,\textstyle\bigcup_{1\leq i\leq n}T^{n-1}_i)\ ,$$
		which are objects in~$\mathsf{MT}(\Q)$ (whose definition is similar to that of~$\Z^{(n)}$ from \S\ref{par: def Z}) and write
		$$\T^{(n)}=\T^{(n),n} \;\;\textnormal{ and }\;\; {}'\T^{(n)}={}'\T^{(n),n}\ .$$ 
		We then have 
		$$H^{r-1}(A_n,B_n\cap A_n)\cong\T^{(n-1),r-1}\ .$$ 
		By mimicking the proof of Proposition~\ref{prop: long exact sequence Z prime}, one produces a long exact sequence in~$\mathsf{MT}(\Q)$:
		\begin{equation}\label{eq: long exact sequence T}
		\cdots \rightarrow \T^{(n-1),r-1} \rightarrow \T^{(n),r} \rightarrow {}'\T^{(n),r} \rightarrow  \T^{(n-1),r} \rightarrow \T^{(n),r+1}\rightarrow \cdots
		\end{equation}
		
		\begin{prop}\label{prop: computation grT}
		\begin{enumerate}
		\item We have~${}'\T^{(n),r}=0$ for~$r\neq n$, and an isomorphism~${}'\T^{(n)}\cong H^n(T^n)\cong \Q(-n)$.
		\item We have~$\T^{(n),r}=0$ for~$r\neq n$, and short exact sequences in~$\mathsf{MT}(\Q)$:
		\begin{equation}\label{eq: short exact sequence T}
		0\rightarrow \T^{(n-1)} \stackrel{j^{(n)}}{\longrightarrow} \T^{(n)} \rightarrow H^n(T^n)\rightarrow 0\ .
		\end{equation}
		\end{enumerate}
		\end{prop}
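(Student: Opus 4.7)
Part~(1) will follow from a relative K\"{u}nneth decomposition. Via the coordinates $(x_1,\ldots,x_n)$ with $x_{n+1} = (x_1 \cdots x_n)^{-1}$, I identify $T^n$ with $(\mathbb{G}_m)^n$ in such a way that each subtorus $T^{n-1}_i = \{x_i=1\}$ for $i=1,\ldots,n$ becomes the standard $i$-th coordinate subtorus. The pair $(T^n, \bigcup_{i=1}^n T^{n-1}_i)$ is then the product of $n$ copies of the pair $(\mathbb{G}_m, \{1\})$, and the motivic K\"{u}nneth formula (equivalently, the tensor compatibility in $\mathsf{DMT}(\mathbb{Q})$ of the defining complex analogous to~(\ref{eq: complex of varieties})) yields
\[
{}'\T^{(n),\bullet} \;\cong\; H^\bullet(\mathbb{G}_m, \{1\})^{\otimes n}.
\]
The long exact sequence of the pair $(\mathbb{G}_m, \{1\})$ then shows that $H^\bullet(\mathbb{G}_m,\{1\})$ is concentrated in degree~$1$ and equals $\mathbb{Q}(-1)$, so the tensor power is concentrated in degree~$n$ and equals $\mathbb{Q}(-n)$. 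The identification with $H^n(T^n)$ is obtained from the natural map of pairs $(T^n, \emptyset) \to (T^n, \bigcup_i T^{n-1}_i)$, whose K\"{u}nneth decomposition is the tensor power of the isomorphism $H^1(\mathbb{G}_m,\{1\}) \stackrel{\sim}{\to} H^1(\mathbb{G}_m) = \mathbb{Q}(-1)$.

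Part~(2) proceeds by induction on $n$. The base case $n=0$ is trivial: $T^{-1}_1 = T^0$ is a point, so $\T^{(0)} = H^0(T^0,T^0) = 0$. Assume inductively that $\T^{(n-1),r}$ vanishes for $r \neq n-1$ and equals $\T^{(n-1)}$ for $r = n-1$. Combining this with part~(1), for every $r \notin \{n, n+1\}$ the corresponding segment of (\ref{eq: long exact sequence T})
\[
0 = {}'\T^{(n),r-1} \to \T^{(n-1),r-1} \to \T^{(n),r} \to {}'\T^{(n),r} = 0
\]
forces $\T^{(n),r} \cong \T^{(n-1),r-1} = 0$. The remaining degrees $r=n, n+1$ are handled simultaneously by the segment
\[
0 \to \T^{(n-1)} \to \T^{(n),n} \to H^n(T^n) \to 0 \to \T^{(n),n+1} \to 0,
\]
in which the zeros on the left and right come from part~(1) and the inductive hypothesis respectively; this yields both the vanishing of $\T^{(n),n+1}$ and the short exact sequence (\ref{eq: short exact sequence T}).

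The main technical obstacle is the motivic K\"{u}nneth isomorphism used in part~(1): one must identify the defining complex of the product pair $(T^n, \bigcup_{i=1}^n T^{n-1}_i)$ with the total tensor product of the individual complexes in $\mathsf{DMT}(\mathbb{Q})$. If a direct motivic argument proves cumbersome, a safe fallback is to verify the isomorphism in the Betti (or Hodge) realization, where classical K\"{u}nneth applies immediately, and then invoke Theorem~\ref{thm: fully faithful} to lift the statement back to $\mathsf{MT}(\mathbb{Q})$.
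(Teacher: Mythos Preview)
Your proposal is correct and follows essentially the same approach as the paper: relative K\"unneth to reduce~(1) to the case $n=1$, then the long exact sequence~(\ref{eq: long exact sequence T}) together with induction on~$n$ for~(2). The paper's own proof is terser (it simply says ``if~(1) is proved then~(2) follows from the long exact sequence''), but your explicit bookkeeping of the relevant segments of the long exact sequence is exactly the intended argument; your fallback via full faithfulness is unnecessary, since the motivic K\"unneth for products of pairs is unproblematic here.
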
	
		
		\begin{proof}
		If (1) is proved, then (2) follows from the long exact sequence (\ref{eq: long exact sequence T}). By choosing coordinates~$(x_1,\ldots,x_n)$ on~$T^n$ we see that we have
		$${}'\T^{(n),\bullet}=H^\bullet((\mathbb{A}^1_\Q-\{0\})^n, \cup_{1\leq i\leq n}\{x_i=1\})\cong H^\bullet(\mathbb{A}^1_\Q-\{0\},\{1\})^{\otimes n} = ({}'\T^{(1),\bullet})^{\otimes n}\ ,$$
		where we have used the relative K\"{u}nneth formula. Thus, it is enough to prove (1) for~$n=1$, which is easy since~${}'\T^{(1),\bullet}$ is nothing but the reduced cohomology of~$\mathbb{A}^1_\Q-\{0\}$.
		\end{proof}

		\begin{rem}\label{rem: inductive T} 
		We note that the morphism~$j^{(n)}:\T^{(n-1)}\rightarrow \T^{(n)}$ in (\ref{eq: short exact sequence T}) is defined in the same way as the morphism~$i^{(n)}:\Z^{(n-1)}\rightarrow \Z^{(n)}$ from \S\ref{par: inductive}. 
		\end{rem}
		
		We note that we have~$\T^{(0)}=H^0(\mathrm{pt},\mathrm{pt})=0$, so that Proposition~\ref{prop: computation grT} implies that we have 
		$$\mathrm{gr}_{2k}^W\T^{(n)} = \begin{cases} \Q(-k) & \textnormal{ if } k\in\{1,\ldots,n\}; \\ 0 & \textnormal{ otherwise.}\end{cases}$$

		In the next proposition, we will prove that the weight filtration of~$\T^{(n)}$ actually splits in~$\mathsf{MT}(\Q)$. For that we introduce the involution~$\tau$ which acts on the tori~$T^n$ by 
		$$\tau:(x_1,\ldots,x_{n+1})\mapsto (x_1^{-1},\ldots,x_{n+1}^{-1})\ .$$
		This induces an involution, still denoted by~$\tau$, on the objects~$\T^{(n),r}$ and~${}'\T^{(n),r}$ of~$\mathsf{MT}(\Q)$, such that all the maps in the long exact sequence (\ref{eq: long exact sequence T}) commute with~$\tau$.
		
		\begin{prop}\label{prop: computation T}
		\begin{enumerate}
		\item The short exact sequences (\ref{eq: short exact sequence T}) split in~$\mathsf{MT}(\Q)$, hence we have isomorphisms:
		$$\T^{(n)}\cong \Q(-1)\oplus\Q(-2)\oplus\cdots\oplus\Q(-n)\ .$$
		Thus, a period matrix for~$\T^{(n)}$ is the diagonal matrix~$\,\mathrm{Diag}(2\pi i, (2\pi i)^2, \ldots,(2\pi i)^n)$.
		\item The involution~$\tau$ acts on the direct summand~$\Q(-k)$ of~$\T^{(n)}$ by multiplication by~$(-1)^k$.
		\end{enumerate}
		\end{prop}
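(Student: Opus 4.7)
The plan is to prove (1) and (2) simultaneously by induction on~$n$, using the involution~$\tau$ to rule out all possible extension classes. The base case $n=1$ is immediate: on $T^1 \cong \mathbb{A}^1_\Q - \{0\}$, the two subvarieties $T^0_1, T^0_2$ both coincide with the single point $\{1\}$, so $\T^{(1)} \cong H^1(\mathbb{A}^1_\Q - \{0\}) \cong \Q(-1)$, on which $\tau$ sends the generator $\mathrm{dlog}(x_1)$ to its negative.

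For the inductive step, I assume $\T^{(n-1)} \cong \bigoplus_{k=1}^{n-1} \Q(-k)$ with $\tau$ acting by $(-1)^k$ on the $k$-th summand. Using coordinates $(x_1,\ldots,x_n)$ on $T^n$, the K\"unneth decomposition $H^n(T^n) \cong \bigwedge^n H^1(\mathbb{A}^1_\Q - \{0\})$ together with $\tau\cdot\mathrm{dlog}(x_i) = -\mathrm{dlog}(x_i)$ shows that $\tau$ acts on $H^n(T^n) \cong \Q(-n)$ by $(-1)^n$. The short exact sequence~\eqref{eq: short exact sequence T} then reads
\begin{equation*}
0 \longrightarrow \textstyle\bigoplus_{k=1}^{n-1} \Q(-k) \longrightarrow \T^{(n)} \longrightarrow \Q(-n) \longrightarrow 0,
\end{equation*}
and its class decomposes as $(e_k) \in \bigoplus_{k=1}^{n-1} \mathrm{Ext}^1_{\mathsf{MT}(\Q)}(\Q(-n),\Q(-k))$. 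Since the whole sequence is $\tau$-equivariant, each $e_k$ is fixed by the induced $\tau$-action on its factor, which is multiplication by the product of scalars $(-1)^k \cdot (-1)^n = (-1)^{n+k}$.

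It remains to check that every $e_k$ vanishes. If $n-k$ is a positive even integer, \eqref{eq: extensions MTQ} gives $\mathrm{Ext}^1_{\mathsf{MT}(\Q)}(\Q(-n),\Q(-k)) \cong \mathrm{Ext}^1_{\mathsf{MT}(\Q)}(\Q(-(n-k)),\Q(0)) = 0$ outright; if $n-k$ is odd, the scalar $(-1)^{n+k}$ equals $-1$ and $\tau$-invariance forces $e_k = 0$ in a $\Q$-vector space. The extension therefore splits, proving (1). For (2), the vanishing $\mathrm{Hom}_{\mathsf{MT}(\Q)}(\Q(-j),\Q(-k)) = 0$ for $j\neq k$ forces $\tau$ to preserve each direct summand of $\T^{(n)}$, and reading off its scalar from the subobject $\T^{(n-1)}$ (for $k<n$) and from the quotient $\Q(-n)$ (for $k=n$) gives precisely $(-1)^k$. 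The main point requiring care is the identification of the induced $\tau$-action on $\mathrm{Ext}^1$ as the product of the scalar actions on source and target, which is what makes the even and odd cases conspire to kill every~$e_k$.
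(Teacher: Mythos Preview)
Your proof is correct and rests on the same idea as the paper's: the combination of $\tau$-equivariance with the vanishing~\eqref{eq: extensions MTQ} of even-weight extension groups forces every~$e_k$ to be zero. The only difference is organizational. The paper first decomposes $\T^{(n)}=\T^{(n)}_+\oplus\T^{(n)}_-$ into $\tau$-eigenspaces and then runs the induction separately on each piece; within each eigenspace the relevant extensions are between Tate objects whose weights differ by an even integer, so the Ext groups vanish outright. You instead keep the full sequence and use that the extension class must be fixed by the induced $\tau$-action on $\mathrm{Ext}^1$, which is multiplication by $(-1)^{n+k}$; this kills the odd-difference classes directly, while the even-difference ones die by~\eqref{eq: extensions MTQ}. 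Both packagings are equivalent, and your explicit identification of the induced action on $\mathrm{Ext}^1(\Q(-n),\Q(-k))$ as the product of scalars (via the bilinearity of $\mathrm{Ext}^1$ and the naturality $(\tau_A)_*e=(\tau_B)^*e$ coming from the equivariant diagram) is exactly the point that the paper's eigenspace decomposition encodes implicitly.
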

		
		\begin{proof}
		We first note that~$\tau$ acts on~$H^1(T^1)$ by multiplication by~$-1$. It is enough to prove it in the de Rham realization, where it follows from~$\tau.\,\mathrm{dlog}(x_1)=-\mathrm{dlog}(x_1)$. Thus,~$\tau$ acts on~$\mathrm{gr}^W_{2n}\T^{(n)}\cong H^n(T^n)\cong H^1(T^1)^{\otimes n}$ by multiplication by~$(-1)^n$, and we are left with proving (1). We denote by~$\T^{(n)}=\T^{(n)}_+\oplus\T^{(n)}_-$ the direct sum decomposition of~$\T^{(n)}$ into its invariant and anti-invariant parts with respect to~$\tau$. We have to prove that we have isomorphisms 
		$$\T^{(2n)}_+\cong \T^{(2n+1)}_+ \cong \Q(-2)\oplus\Q(-4)\oplus\cdots\oplus \Q(-2n) $$
		and
		$$ \T^{(2n+1)}_-\cong \T^{(2n+2)}_-\cong \Q(-1)\oplus\Q(-3)\oplus\cdots\oplus \Q(-(2n+1))\ .$$
		We only prove the statements corresponding to the invariant parts, the statements corresponding to the anti-invariant parts being proved similarly. We use induction on~$n$, the case~$n=0$ being trivial:~$\T^{(0)}_+=\T^{(1)}_+=0$. The short exact sequences (\ref{eq: short exact sequence T}) imply that we have short exact sequences
		$$0\rightarrow \T^{(2n+1)}_+ \rightarrow \T^{(2n+2)}_+ \rightarrow \Q(-(2n+2)) \rightarrow 0 \;\;\; \textnormal{ and } \;\;\; 0\rightarrow \T^{(2n+2)}_+ \rightarrow \T^{(2n+3)}_+ \rightarrow 0 \rightarrow 0 \ .$$
		Using the induction hypothesis we see that we have 
		\begin{eqnarray*}
		\mathrm{Ext}^1_{\mathsf{MT}(\Q)}(\Q(-(2n+2)),\T^{(2n+1)}_+)  & \cong & \mathrm{Ext}^1_{\mathsf{MT}(\Q)}(\Q(-(2n+2)),\Q(-2)\oplus\Q(-4)\oplus\cdots\oplus\Q(-2n))\\
		& \cong & \bigoplus_{1\leq k\leq n}\mathrm{Ext}^1_{\mathsf{MT}(\Q)}(\Q(-2k),\Q(0))  \\
		& = & 0
		\end{eqnarray*}
		where we have used (\ref{eq: extensions MTQ}). Thus, the first short exact sequence splits. The second short exact sequence then completes the induction.
		\end{proof}
		
		\begin{rem}\label{rem: basis TdR}
		From the short exact sequences (\ref{eq: short exact sequence T}) it is clear that, for every $n$, $\T^{(n)}_\dR$ has a basis~$(w_1^{(n)},\ldots,w_n^{(n)})$ which is compatible with the weight grading, such that~$w_n^{(n)}$ is the class of the form~$\mathrm{dlog}(x_1)\wedge\cdots\wedge\mathrm{dlog}(x_n)$, and such that these bases are compatible with the short exact sequences (\ref{eq: short exact sequence T}).
		\end{rem}

	\subsection{The structure of the zeta motives}
	
		We can now determine the structure of the zeta motives~$\Z^{(n)}$, for~$n\geq 1$.
	
		\begin{thm}\label{thm: short exact sequence ZT}
		\begin{enumerate}
		\item We have a short exact sequence in~$\mathsf{MT}(\Q)$:
		\begin{equation}\label{eq: short exact sequence ZT}
		0\rightarrow \Q(0)\rightarrow \Z^{(n)} \stackrel{p^{(n)}}{\longrightarrow} \T^{(n-1)}(-1)\rightarrow 0\ ,
		\end{equation}
		with~$\T^{(n-1)}(-1)\cong \Q(-2)\oplus\cdots\oplus\Q(-n)$.
		\item We have a short exact sequence in~$\mathsf{MT}(\Q)$:
		\begin{equation}\label{eq: short exact sequence Z}
		0\rightarrow \Z^{(n-1)} \stackrel{i^{(n)}}{\longrightarrow} \Z^{(n)} \rightarrow \Q(-n) \rightarrow 0\ .
		\end{equation}
		\item These short exact sequences fit into a commutative diagram
		\begin{equation}\label{eq: commutative diagram 9}
		\begin{gathered}\xymatrix{
		& 0 \ar[d] & 0\ar[d] & 0\ar[d] & \\
		0 \ar[r] & \Q(0) \ar[d]^{=}\ar[r] &\Z^{(n-1)} \ar[d]^{i^{(n)}}\ar[r]^-{p^{(n-1)}} &\T^{(n-2)}(-1)\ar[d]^{j^{(n-1)}}\ar[r] & 0 \\
		0 \ar[r] & \Q(0) \ar[d]\ar[r] &\Z^{(n)} \ar[d]\ar[r]^-{p^{(n)}} &\T^{(n-1)}(-1)\ar[d]\ar[r] & 0 \\
		0 \ar[r] & 0 \ar[d]\ar[r] & \Q(-n) \ar[d]\ar[r]^{=} & \Q(-n) \ar[d]\ar[r] & 0 \\
		& 0 & 0 & 0 &
		}\end{gathered}
		\end{equation}
		where all rows and columns are exact.
		\end{enumerate}
		\end{thm}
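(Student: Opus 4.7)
The plan is to derive all three statements from two long exact sequences already at our disposal—the Gysin long exact sequence of Proposition~\ref{prop: Gysin long exact sequence} and the open/closed sequence (\ref{eq: long exact sequence Z})—together with the vanishing results of Propositions~\ref{prop: computation HXB} and~\ref{prop: computation grT}. For part~(1), I would read off the Gysin sequence (\ref{eq: Gysin long exact sequence}) at index $r=n$: the terms $H^r(X_n,B_n)$ vanish for $r\neq n$ by Proposition~\ref{prop: computation HXB}, and $H^{r-1}(A_n,B_n\cap A_n)\cong\T^{(n-1),r-1}$ vanishes for $r\neq n$ by Proposition~\ref{prop: computation grT}(2); only the segment around $r=n$ survives, collapsing the sequence to
$$0\to\Q(0)\to\Z^{(n)}\to\T^{(n-1)}(-1)\to 0,$$
and Proposition~\ref{prop: computation T}(1) identifies the right-hand term with $\Q(-2)\oplus\cdots\oplus\Q(-n)$.

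For part~(2), I would run the analogous Gysin sequence for ${}'\Z^{(n),r}$. The K\"{u}nneth formula gives $H^\bullet(X_n,B'_n)\cong H^\bullet(\mathbb{A}^1,\{0,1\})^{\otimes(n-1)}\otimes H^\bullet(\mathbb{A}^1,\{0\})$, and the last tensor factor vanishes, so the Gysin sequence degenerates into isomorphisms ${}'\Z^{(n),r}\cong H^{r-1}(A_n,A_n\cap B'_n)(-1)$. The pair $(A_n,A_n\cap B'_n)$ is isomorphic to $(T^{n-1},\bigcup_{i=1}^{n-1}T^{n-2}_i)$, so this equals ${}'\T^{(n-1),r-1}(-1)$, which by Proposition~\ref{prop: computation grT}(1) is $\Q(-n)$ in degree $r=n$ and zero otherwise. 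Feeding this into the long exact sequence (\ref{eq: long exact sequence Z}), together with the inductive vanishing $\Z^{(n-1),r}=0$ for $r\neq n-1$ (base case $\Z^{(1)}=H^1(\mathbb{A}^1_\Q,\{0,1\})\cong\Q(0)$), yields the short exact sequence~(2) and simultaneously propagates the vanishing $\Z^{(n),r}=0$ for $r\neq n$.

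For part~(3), the rows of~(1) for $n-1$ and $n$ are functorial in the closed embedding $(\td{X}_{n-1},\td{A}_{n-1})\hookrightarrow(\td{X}_n,\td{A}_n)$ (which carries each boundary piece into its counterpart), producing a morphism between the two short exact sequences whose middle vertical is $i^{(n)}$. The left vertical is the identity: the Betti dual $H_n^{\B}(X_n,B_n)$ is spanned by $[\square^n]$, and the relation $i^{(n),\vee}_\B([\square^{n-1}])=[\square^n]$ from (\ref{eq: inductive square}) shows the dual map sends generator to generator. The right vertical is induced by the inclusion of tori $(A_{n-1},B_{n-1}\cap A_{n-1})\hookrightarrow(A_n,B_n\cap A_n)$, which coincides with $j^{(n-1)}$ by Remark~\ref{rem: inductive T}. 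The snake lemma applied to this morphism of short exact sequences then gives $\ker(i^{(n)})\cong\ker(j^{(n-1)})=0$ and $\mathrm{coker}(i^{(n)})\cong\mathrm{coker}(j^{(n-1)})=\Q(-n)$ (the latter via the Tate twist of (\ref{eq: short exact sequence T})), completing the $3\times 3$ diagram.

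The main obstacle is the functoriality claim in part~(3): that the closed embedding of pairs $(\td{X}_{n-1},\td{A}_{n-1})\hookrightarrow(\td{X}_n,\td{A}_n)$ induces a morphism of Gysin triangles in $\mathsf{DMT}(\Q)$, and that the resulting map on the $\T$-factors is precisely the inductively defined $j^{(n-1)}$. This is a compatibility between two independent constructions—the Gysin triangle used in Proposition~\ref{prop: Gysin long exact sequence} and the boundary map used in the $\T$-version of the open/closed sequence—that is geometrically transparent but requires care to verify at the level of Voevodsky's motives. Once this is in hand, parts~(1) and~(2) together with the snake lemma formally yield~(3).
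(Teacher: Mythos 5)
Your proof is correct, and for parts (1) and (3) it essentially follows the paper's route: part~(1) is read off from the Gysin sequence plus the vanishings of Propositions~\ref{prop: computation HXB} and~\ref{prop: computation grT}, and the commutativity in part~(3) is exactly what the paper's phrase ``compatibility of the long exact sequences (\ref{eq: long exact sequence Z}) and (\ref{eq: long exact sequence T})'' is hiding, namely that the Gysin triangle for the pair $(\td{X}_\bullet,\td{A}_\bullet)$ is functorial with respect to the closed embedding $\td{X}_{n-1}\hookrightarrow\td{X}_n$; spelling that out, and flagging that it is the real work, is a modest improvement in precision over the paper, which does not verify it either.

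Where you genuinely diverge is part~(2). The paper derives the middle column of the $3\times 3$ diagram by diagram chase once the rows and the right column are known, whereas you establish (\ref{eq: short exact sequence Z}) directly: a K\"unneth computation gives $H^\bullet(X_n,B'_n)\cong H^\bullet(\A^1,\{0,1\})^{\otimes(n-1)}\otimes H^\bullet(\A^1,\{0\})=0$, so the Gysin sequence for ${}'\Z^{(n)}$ degenerates to isomorphisms ${}'\Z^{(n),r}\cong {}'\T^{(n-1),r-1}(-1)$, which by Proposition~\ref{prop: computation grT}(1) is $\Q(-n)$ when $r=n$ and zero otherwise; feeding this into the long exact sequence (\ref{eq: long exact sequence Z}) and inducting on the vanishing $\Z^{(m),r}=0$ for $r\neq m$ (which actually already follows from the Gysin argument for~(1)) then gives the short exact sequence. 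This is a clean, self-contained alternative; the trade-off is that it requires the extra Gysin computation for ${}'\Z^{(n)}$, and your snake-lemma argument in part~(3) then re-derives (2) a second time, introducing a small redundancy that the paper's logical order avoids. Either way the content is the same, and both routes rest on the same key facts: Propositions~\ref{prop: Gysin long exact sequence}, \ref{prop: computation HXB}, \ref{prop: computation grT} and~\ref{prop: computation T}, together with the compatibility of the Gysin and open/closed triangles.
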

		
		\begin{proof}
		Assertion (1) follows from Propositions~\ref{prop: Gysin long exact sequence},~\ref{prop: computation HXB} and~\ref{prop: computation T}. The commutativity of (\ref{eq: commutative diagram 9}) follows from the compatibility of the long exact sequences (\ref{eq: long exact sequence Z}) and (\ref{eq: long exact sequence T}). A diagram chase implies that (\ref{eq: short exact sequence Z}) is exact.
 		\end{proof}
 		
 		\begin{rem}
 		The difference between the sign~$(-1)^k$ in Proposition~\ref{prop: computation T} (2) and the sign~$(-1)^{k-1}$ in Theorem~\ref{thm: sign intro} comes from the Tate twist~$(-1)$ in the short exact sequence (\ref{eq: short exact sequence ZT}).
 		\end{rem}

		\begin{thm}\label{thm: period matrix Z}
		\begin{enumerate}
		\item The classes
		$$v_k^{(n)}:=[\omega_k^{(n)}] \hspace{1cm} (k=0,2,\ldots,n)$$ 
		of the Eulerian differential forms provide a basis~$(v_0^{(n)},v_2^{(n)},\ldots,v_n^{(n)})$ of the de Rham realization~$\Z^{(n)}_\dR$ which is compatible with the weight grading.
		\item There exists a unique basis~$(\varphi_0^{(n)},\varphi_2^{(n)},\ldots,\varphi_n^{(n)})$ for the dual of the Betti realization~$\Z^{(n),\vee}_\B$ which is compatible with the weight filtration and such that the period matrix for~$\Z^{(n)}$ in the~$v$-basis and the~$\varphi$-basis is
		\begin{equation}\label{eq: period matrix Z} 
		\left( \begin{array}{ccccccc}
		1 & \zeta(2) & \zeta(3) & \cdots & \cdots & \zeta(n-1) & \zeta(n) \\
		&(2\pi i)^2 &  &  &  &  & \\
		 && (2\pi i)^3 &  &  & 0 & \\
		 &&  & \ddots &  &  & \\
		 &&  & & \ddots &  & \\
		 &0&  &  &  & (2\pi i)^{n-1} & \\
		 &&  &  &  &  & (2\pi i)^n 
		\end{array} \right)\cdot
		\end{equation}
		\end{enumerate}
		\end{thm}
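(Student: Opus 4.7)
The plan is to deduce both parts of the theorem directly from the structural results already established, without any further geometric input: part~(1) will follow from a weight-and-dimension count combined with the explicit period computation of Lemma~\ref{lem: eulerian forms}, and part~(2) will be a formal construction of the Betti basis using the short exact sequence of Theorem~\ref{thm: short exact sequence ZT}(1) together with the direct-sum decomposition of~$\T^{(n-1)}(-1)$ provided by Proposition~\ref{prop: computation T}(1).

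For part~(1), combining Theorem~\ref{thm: short exact sequence ZT}(1) with Proposition~\ref{prop: computation T}(1) gives $\mathrm{gr}^W\Z^{(n)}\cong\Q(0)\oplus\Q(-2)\oplus\Q(-3)\oplus\cdots\oplus\Q(-n)$, so $\Z^{(n)}_\dR$ is $n$-dimensional with one-dimensional graded pieces in weights $0,4,6,\ldots,2n$. By Proposition~\ref{prop: eulerian forms weights}, each $v_k^{(n)}$ sits in the weight-$2k$ component, and by Lemma~\ref{lem: eulerian forms}, $\langle[\square^n],v_k^{(n)}\rangle=\zeta(k)\neq 0$ for $k\geq 2$ and $\langle[\square^n],v_0^{(n)}\rangle=1$, so each $v_k^{(n)}$ is nonzero. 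Being nonzero elements in distinct one-dimensional graded components, the family $(v_0^{(n)},v_2^{(n)},\ldots,v_n^{(n)})$ is automatically a basis of $\Z^{(n)}_\dR$ compatible with the weight grading.

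For part~(2), take $\varphi_0^{(n)}:=[\square^n]$, so that Lemma~\ref{lem: eulerian forms} recovers the first row of~(\ref{eq: period matrix Z}). For $k\geq 2$, I construct $\varphi_k^{(n)}$ as the image under $p^{(n),\vee}_\B$ of a suitable element of $\T^{(n-1),\vee}_\B(-1)$. Since the de Rham kernel of $p^{(n)}$ is $\Q(0)_\dR$ and $v_k^{(n)}$ sits in weight $2k>0$, the class $p^{(n)}_\dR(v_k^{(n)})$ is a nonzero element of the weight-$2k$ component of $\T^{(n-1)}(-1)_\dR$, which by Proposition~\ref{prop: computation T}(1) is the summand $\Q(-k)_\dR$. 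Choose the splitting of $\T^{(n-1)}(-1)\cong\bigoplus_{k=2}^n\Q(-k)$ so that $p^{(n)}_\dR(v_k^{(n)})$ is the distinguished de Rham generator of the $k$-th summand, let $\psi_k\in\T^{(n-1),\vee}_\B(-1)$ be the corresponding dual Betti basis element, characterized by $\langle\psi_k,p^{(n)}_\dR(v_j^{(n)})\rangle=(2\pi i)^k\delta_{kj}$, and set $\varphi_k^{(n)}:=p^{(n),\vee}_\B(\psi_k)$. Adjunction together with $p^{(n)}_\dR(v_0^{(n)})=0$ yields $\langle\varphi_k^{(n)},v_j^{(n)}\rangle=(2\pi i)^k\delta_{kj}$ for $k\geq 2$, giving exactly the period matrix~(\ref{eq: period matrix Z}); compatibility with the weight filtration is automatic since $\varphi_k^{(n)}$ comes from $\T^{(n-1),\vee}_\B(-1)\subset\Z^{(n),\vee}_\B$. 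Uniqueness is a short triangular-matrix argument: any other weight-filtration-compatible basis differs from the one constructed by an upper-triangular rational change-of-basis matrix, and requiring each entry of the period matrix to match its specified value forces this matrix to be the identity.

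Given all the preceding theory, the proof above is largely formal, and the main technical obstacle was in fact already overcome in Proposition~\ref{prop: computation T}(1), whose splitting of $\T^{(n)}$ as a sum of pure Tate motives required the involution~$\tau$ to separate invariant and anti-invariant parts so that only vanishing extensions of the form $\mathrm{Ext}^1_{\mathsf{MT}(\Q)}(\Q(-2k),\Q(0))=0$ (from~(\ref{eq: extensions MTQ})) needed to be handled; without that splitting one would only know $\Z^{(n)}$ as an extension of a possibly non-split mixed Tate motive by $\Q(0)$, and pinning down each entry of the period matrix would be substantially harder.
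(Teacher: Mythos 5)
Your proof is correct and follows essentially the same route as the paper: part~(1) via Proposition~\ref{prop: eulerian forms weights}, Lemma~\ref{lem: eulerian forms}, and the weight-graded structure from Theorem~\ref{thm: short exact sequence ZT}(1) and Proposition~\ref{prop: computation T}(1); part~(2) by setting $\varphi_0^{(n)}=[\square^n]$ and pulling back a Betti dual basis of $\T^{(n-1),\vee}_\B$ along $p^{(n),\vee}_\B$. The only difference is that you spell out the normalization of the elements $\psi_k$ explicitly against the classes $p^{(n)}_\dR(v_k^{(n)})$, which tidies up a point the paper's proof leaves implicit when it says the period entries ``follow from Lemma~\ref{lem: eulerian forms} and Proposition~\ref{prop: computation T}.''
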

		
		\begin{proof}
		\begin{enumerate}
		\item Proposition~\ref{prop: eulerian forms weights} says that~$v_k^{(n)}$ is in the pure weight~$2k$ component of~$\Z^{(n)}_\dR$. Thus, it is enough to show that it is non-zero, which is a consequence of the equalities~$\langle[\square^n],v_0^{(n)}\rangle=1\neq 0$ and~$\langle[\square^n],v_k^{(n)}\rangle=\zeta(k)\neq 0$ for~$k=2,\ldots,n$.
		\item We put~$\varphi_0^{(n)}=[\square^n]$. Let~$(\psi_1^{(n-1)},\ldots,\psi_{n-1}^{(n-1)})$ be a basis of~$\T^{(n-1),\vee}_\B$ for which the period matrix is diagonal, as in Proposition~\ref{prop: computation T}. Let~$p^{(n)}$ denote the morphism~$\Z^{(n)}\rightarrow \T^{(n-1)}(-1)$, and let us consider the transpose of its Betti realization~$p^{(n),\vee}_\B:\T^{(n-1),\vee}_\B\rightarrow \Z^{(n),\vee}_\B$. Then we can put~$\varphi_k^{(n)}=p^{(n),\vee}_\B(\psi_{k-1}^{(n-1)})$ for~$k=2,\ldots,n$. The fact that this gives a basis of~$\Z^{(n),\vee}_\B$ is a consequence of the short exact sequence (\ref{eq: short exact sequence ZT}). The fact that the period matrix is as required follows from Lemma~\ref{lem: eulerian forms} and Proposition~\ref{prop: computation T}. The uniqueness statement is obvious.
		\end{enumerate} 
		\end{proof}
		
		
		We have already noted that the classes~$v_k^{(n)}$ are compatible with the inductive system of the zeta motives. By the uniqueness statement in Theorem~\ref{thm: period matrix Z}, this is also the case for the classes~$\varphi_k^{(n)}$, and the zeta motive~$\Z$ has an infinite period matrix
		$$\left( \begin{array}{ccccccc}
		1 & \zeta(2) & \zeta(3) & \zeta(4) & \cdots & \cdots  & \cdots\\
		&(2\pi i)^2 &  &  &  & &   \\
		 && (2\pi i)^3 &  &  & 0  & \\
		 &&  & (2\pi i)^4 &  & &  \\
		 &&  & & \ddots & &  \\
		 &0&  &  &  & \ddots & \\
		 &&&&&&\ddots
		\end{array} \right)\cdot$$

	\subsection{The odd zeta motive}
		
		Let us write~$\T^{(n-1)}=\T^{(n-1)}_+\oplus\T^{(n-1)}_-$ for the direct sum decomposition into its invariant and anti-invariant parts with respect to~$\tau$, and let us write~$p^{(n)}:\Z^{(n)}\rightarrow \T^{(n-1)}(-1)$ for the surjection appearing in the short exact sequence (\ref{eq: short exact sequence ZT}).
		
		\begin{defi}
		The~$n$-th \emph{odd zeta motive}~$\Z^{(n),\mathrm{odd}}$ is the object of~$\mathsf{MT}(\Q)$ defined by 
		$$\Z^{(n),\mathrm{odd}}:=(p^{(n)})^{-1}(\T^{(n-1)}_+(-1))\ .$$
		\end{defi}
		
		We obviously have a short exact sequence
		\begin{equation}\label{eq: short exact sequence Z odd}
		0\rightarrow \Q(0)\rightarrow \Z^{(n),\mathrm{odd}} \rightarrow \T^{(n-1)}_+(-1)\rightarrow 0
		\end{equation}
		with~$$\T^{(n-1)}_+(-1)\cong\bigoplus_{3\leq 2k+1\leq n}\Q(-(2k+1))\ .$$
		
		We note that there are morphisms
		$$i^{(n),\mathrm{odd}}:\Z^{(n-1),\mathrm{odd}}\rightarrow \Z^{(n),\mathrm{odd}}$$
		such that~$i^{(2n),\mathrm{odd}}$ is an isomorphism for every integer~$n$. The limit
		$$\Z^{\mathrm{odd}}:=\lim_{\stackrel{\longrightarrow}{n}}\Z^{(n),\mathrm{odd}}$$
		is an ind-object in~$\mathsf{MT}(\Q)$ that we simply call the \emph{odd zeta motive}.
		
		\begin{prop}\label{prop: period matrix Z odd}
		\begin{enumerate}
		\item We have a direct sum decomposition
		\begin{equation}\label{eq: direct sum Z odd}
		\Z^{(n)}\cong \Z^{(n),\mathrm{odd}} \oplus \bigoplus_{2\leq 2k\leq n}\Q(-2k)\ .
		\end{equation}
		\item A period matrix for~$\Z^{(2n+1),\mathrm{odd}}\cong\Z^{(2n+2),\mathrm{odd}}$ is 
		\begin{equation}\label{eq: period matrix Z odd} 
		\left( \begin{array}{ccccccc}
		1 & \zeta(3) & \zeta(5) & \cdots & \cdots & \zeta(2n-1) & \zeta(2n+1) \\
		&(2\pi i)^3 &  &  &  &  & \\
		 && (2\pi i)^5 &  &  & 0 & \\
		 &&  & \ddots &  &  & \\
		 &&  & & \ddots &  & \\
		 &0&  &  &  & (2\pi i)^{2n-1} & \\
		 &&  &  &  &  & (2\pi i)^{2n+1} 
		\end{array} \right)\cdot
		\end{equation}
		\end{enumerate}
		\end{prop}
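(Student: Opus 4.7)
The plan is to deduce both statements from Theorem~\ref{thm: period matrix Z} together with the Ext computation (\ref{eq: extensions MTQ}).

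For part (1), I use the short exact sequence (\ref{eq: short exact sequence ZT}) and the decomposition $\T^{(n-1)}(-1)=\T^{(n-1)}_+(-1)\oplus\T^{(n-1)}_-(-1)$ provided by Proposition~\ref{prop: computation T} to form the sub-object $N:=(p^{(n)})^{-1}(\T^{(n-1)}_-(-1))\subset\Z^{(n)}$, which fits in
$$0\rightarrow \Q(0)\rightarrow N\rightarrow \T^{(n-1)}_-(-1)\rightarrow 0.$$
By Proposition~\ref{prop: computation T}(2) combined with the Tate twist, $\T^{(n-1)}_-(-1)\cong\bigoplus_{2\leq 2k\leq n}\Q(-2k)$ is a direct sum of pure Tate objects of \emph{even} positive twist, so each relevant extension group $\mathrm{Ext}^1_{\mathsf{MT}(\Q)}(\Q(-2k),\Q(0))$ vanishes by (\ref{eq: extensions MTQ}). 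Hence the displayed sequence splits, producing a lift $s:\T^{(n-1)}_-(-1)\hookrightarrow\Z^{(n)}$ whose image is complementary to $\Z^{(n),\mathrm{odd}}=(p^{(n)})^{-1}(\T^{(n-1)}_+(-1))$; this is the decomposition (\ref{eq: direct sum Z odd}).

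For part (2), I mimic the proof of Theorem~\ref{thm: period matrix Z}(2) applied to the short exact sequence (\ref{eq: short exact sequence Z odd}). Each Eulerian class $v_k^{(n)}$ is pure of weight $2k$ by Proposition~\ref{prop: eulerian forms weights}, and since the complementary summand in (\ref{eq: direct sum Z odd}) has de Rham realization concentrated in weights $4,8,12,\ldots$, a weight-parity inspection places $v_0^{(n)}$ and the $v_k^{(n)}$ with $k$ odd, $3\leq k\leq n$, in $\Z^{(n),\mathrm{odd}}_\dR$, giving a basis compatible with the weight grading. The corresponding Betti dual basis is then obtained, exactly as in the proof of Theorem~\ref{thm: period matrix Z}(2), by pulling back a diagonal basis of $\T^{(n-1)}_+(-1)^\vee_\B$ through the surjection $\Z^{(n),\mathrm{odd}}\twoheadrightarrow \T^{(n-1)}_+(-1)$ of (\ref{eq: short exact sequence Z odd}) and by choosing a class pairing to~$1$ with $v_0^{(n)}$. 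The resulting period matrix is the sub-matrix of (\ref{eq: period matrix Z}) with indices in $\{0\}\cup\{k\text{ odd}\geq 3\}$, which is precisely (\ref{eq: period matrix Z odd}); the first-row periods $\zeta(2k+1)$ come from Lemma~\ref{lem: eulerian forms}. The isomorphism $\Z^{(2n+1),\mathrm{odd}}\cong\Z^{(2n+2),\mathrm{odd}}$ is built into the definition, since the cokernel of $i^{(2n+2)}$ is the even Tate object $\Q(-(2n+2))$, which lies in the complement of $\Z^{(\cdot),\mathrm{odd}}$.

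The only non-formal step is the Ext-vanishing splitting in part (1); once that is in place, part (2) reduces to reading off a sub-matrix of the already-established period matrix of Theorem~\ref{thm: period matrix Z}.
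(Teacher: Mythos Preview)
Your argument is correct, but your route to part~(1) differs from the paper's. You split the auxiliary extension
\[
0\rightarrow \Q(0)\rightarrow (p^{(n)})^{-1}(\T^{(n-1)}_-(-1))\rightarrow \T^{(n-1)}_-(-1)\rightarrow 0
\]
abstractly, by invoking the vanishing of~$\mathrm{Ext}^1_{\mathsf{MT}(\Q)}(\Q(-2k),\Q(0))$ from~(\ref{eq: extensions MTQ}) (ultimately Borel's computation of~$K_\bullet(\Q)\otimes\Q$). The paper instead works concretely with the period matrix of Theorem~\ref{thm: period matrix Z}: it uses Euler's identity~$\zeta(2k)=\lambda_{2k}(2\pi i)^{2k}$ with~$\lambda_{2k}\in\Q$ to replace~$\varphi_0^{(n)}$ by~${}'\varphi_0^{(n)}=\varphi_0^{(n)}-\sum_{2\leq 2k\leq n}\lambda_{2k}\varphi_{2k}^{(n)}$, making the even-zeta entries in the first row vanish; the resulting block-diagonal period matrix then yields the splitting (via the full faithfulness of Theorem~\ref{thm: fully faithful}). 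Your approach is cleaner categorically and avoids the explicit appeal to Euler, while the paper's gives an explicit Betti splitting and makes the dependence on classical identities visible. For part~(2) the two arguments essentially coincide: both read off the submatrix of~(\ref{eq: period matrix Z}) indexed by~$\{0\}\cup\{k\text{ odd}\geq 3\}$, though the paper establishes part~(2) first and independently of part~(1), whereas you route the weight-parity check for the~$v_k^{(n)}$ through the decomposition~(\ref{eq: direct sum Z odd}).
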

		
		Proposition~\ref{prop: period matrix Z odd} implies that the odd zeta motive~$\Z^{\mathrm{odd}}$ has an infinite period matrix (\ref{eq: period matrix Z odd intro}). In particular, $\Z^{(3),\mathrm{odd}}$ is the essentially unique non-trivial extension of $\Q(-3)$ by $\Q(0)$ in $\mathsf{MT}(\mathbb{Q})$.
		
		\begin{proof}
		This is a consequence of the short exact sequence (\ref{eq: short exact sequence ZT}) and the vanishing of the extension groups $\mathrm{Ext}^1_{\mathsf{MT}(\Q)}(\Q(-2k),\Q(0))$, see (\ref{eq: extensions MTQ}).
		An alternative proof which does not use the computation of extension groups goes as follows. A basis for~$\Z_{\dR}^{(n),\mathrm{odd}}$ is given by~$v_0^{(n)}$ and the~$v_{2k+1}^{(n)}$, for~$3\leq 2k+1\leq n$, and a basis for~$\Z_{\B}^{(n),\mathrm{odd},\vee}$ is given by~$\varphi_0^{(n)}$ and the~$\varphi_{2k+1}^{(n)}$, for~$3\leq 2k+1\leq n$. This gives the desired shape for the period matrix (\ref{eq: period matrix Z odd}). Now, Euler's solution to the Basel problem implies that we have~$\zeta(2k)=\lambda_{2k}(2\pi i)^{2k}$ for every integer~$k\geq 1$, with~$\lambda_{2k}=-\frac{B_{2k}}{2(2k)!}\in\mathbb{Q}$. Thus, we may replace the basis~$(\varphi_0^{(n)},\varphi_2^{(n)},\ldots,\varphi_n^{(n)})$ of Theorem~\ref{thm: period matrix Z} by the basis~$({}'\varphi^{(n)}_0,\varphi_2^{(n)},\ldots,\varphi_n^{(n)})$ with 
		$${}'\varphi^{(n)}_0=\varphi^{(n)}_0-\sum_{2\leq 2k\leq n}\lambda_{2k}\,\varphi_{2k}^{(n)}$$
		to get a period matrix similar to (\ref{eq: period matrix Z}) where the even zeta values~$\zeta(2k)$ in the first row are replaced by~$0$. This implies the direct sum decomposition (\ref{eq: direct sum Z odd}).
		\end{proof}
		
		We finish by proving that all the objects in~$\mathsf{MT}(\Q)$ considered earlier actually live in the full subcategory~$\mathsf{MT}(\mathbb{Z})$.
	
		\begin{prop}\label{prop: Z MTZ}
		The zeta motives~$\Z^{(n)}$ and the odd zeta motives~$\Z^{(n),\mathrm{odd}}$ are objects of the category~$\mathsf{MT}(\mathbb{Z})$.
		\end{prop}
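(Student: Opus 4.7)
The plan is to reduce to the odd zeta motive and then use the computation of extension groups. Since $\mathsf{MT}(\mathbb{Z})$ is a full tannakian subcategory of $\mathsf{MT}(\Q)$ containing every $\Q(-n)$, the direct sum decomposition (\ref{eq: direct sum Z odd}) immediately shows that $\Z^{(n)} \in \mathsf{MT}(\mathbb{Z})$ as soon as $\Z^{(n),\mathrm{odd}} \in \mathsf{MT}(\mathbb{Z})$, because the remaining summands are Tate objects. So the problem is concentrated on the odd zeta motive.

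For $\Z^{(n),\mathrm{odd}}$, the strategy is to exploit the short exact sequence (\ref{eq: short exact sequence Z odd})
$$0 \to \Q(0) \to \Z^{(n),\mathrm{odd}} \to \T^{(n-1)}_+(-1) \to 0,$$
whose quotient is the pure Tate object $\T^{(n-1)}_+(-1) \cong \bigoplus_{3 \leq 2k+1 \leq n} \Q(-(2k+1))$. Its Yoneda extension class lies in
$$\mathrm{Ext}^1_{\mathsf{MT}(\Q)}(\T^{(n-1)}_+(-1), \Q(0)) \;=\; \bigoplus_{3 \leq 2k+1 \leq n} \mathrm{Ext}^1_{\mathsf{MT}(\Q)}(\Q(-(2k+1)), \Q(0)).$$
The crucial input is property (2) of the category $\mathsf{MT}(\mathbb{Z})$ recalled in \S 2.5: for every $m \neq 1$, the natural map $\mathrm{Ext}^1_{\mathsf{MT}(\mathbb{Z})}(\Q(-m),\Q(0)) \to \mathrm{Ext}^1_{\mathsf{MT}(\Q)}(\Q(-m),\Q(0))$ is an isomorphism. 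Since every index $2k+1$ appearing here is $\geq 3$, the extension class lifts canonically to $\mathrm{Ext}^1_{\mathsf{MT}(\mathbb{Z})}(\T^{(n-1)}_+(-1), \Q(0))$.

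The lifted class produces a short exact sequence in $\mathsf{MT}(\mathbb{Z})$ with the same Yoneda class, hence a middle term $B \in \mathsf{MT}(\mathbb{Z})$. By Yoneda's theorem, the two sequences are isomorphic as extensions in $\mathsf{MT}(\Q)$, so $B \cong \Z^{(n),\mathrm{odd}}$ in $\mathsf{MT}(\Q)$. Since $\mathsf{MT}(\mathbb{Z}) \hookrightarrow \mathsf{MT}(\Q)$ is a full (hence replete) subcategory, this isomorphism forces $\Z^{(n),\mathrm{odd}} \in \mathsf{MT}(\mathbb{Z})$, which combined with the first paragraph finishes the proof.

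The main obstacle is not really technical but conceptual: one must be confident that knowing the Yoneda class of an extension is enough to descend the middle term to the subcategory. This relies on the replete/full character of the inclusion $\mathsf{MT}(\mathbb{Z}) \hookrightarrow \mathsf{MT}(\Q)$, together with the vanishing of higher Ext groups in $\mathsf{MT}(\Q)$, which together imply that an extension of objects of $\mathsf{MT}(\mathbb{Z})$ whose class lies in the image of the Ext group computed inside $\mathsf{MT}(\mathbb{Z})$ is itself in $\mathsf{MT}(\mathbb{Z})$. The alternative, suggested in the earlier remark of the paper, would be to apply the Goncharov--Manin good-reduction criterion directly on a compactification of $\td{X}_n - \td{A}_n$, but the route through the extension computation above avoids that technicality entirely.
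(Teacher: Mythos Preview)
Your argument is correct but follows a different route from the paper's. After the common reduction to $\Z^{(n),\mathrm{odd}}$ via the decomposition~(\ref{eq: direct sum Z odd}), the paper invokes directly the Tannakian definition of $\mathsf{MT}(\mathbb{Z})$ recalled in the proof: $\mathsf{MT}(\Q)\simeq\mathsf{grRep}(\mathfrak{g}^{\Q}_\dR)$, and $\mathsf{MT}(\mathbb{Z})$ consists of those objects on which the degree~$-1$ part of $\mathfrak{g}^{\Q}_\dR$ acts trivially. Since $\Z^{(n),\mathrm{odd}}_\dR$ sits only in weights $0$ and $2(2k+1)$ with $2k+1\geq 3$, no two nonzero weight pieces differ by~$2$, so the degree~$-1$ operators have nothing to hit and act by zero; this is a one-line check. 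Your approach instead lifts the Yoneda class through the isomorphism $\mathrm{Ext}^1_{\mathsf{MT}(\mathbb{Z})}(\Q(-m),\Q(0))\cong\mathrm{Ext}^1_{\mathsf{MT}(\Q)}(\Q(-m),\Q(0))$ for $m\neq 1$, builds a comparison object $B\in\mathsf{MT}(\mathbb{Z})$, and concludes by repleteness. This is perfectly valid and perhaps more portable (it only uses the stated properties of $\mathsf{MT}(\mathbb{Z})$ from \S 2.5, not its Lie-algebraic description), at the cost of a small detour through Yoneda.

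One wording issue: ``full (hence replete)'' is not a valid implication in general. What makes $\mathsf{MT}(\mathbb{Z})\hookrightarrow\mathsf{MT}(\Q)$ replete is that it is a full subcategory cut out by an isomorphism-invariant condition (trivial action of the degree~$-1$ part, or equivalently the unramifiedness condition of \cite{delignegoncharov}); you should say that instead.
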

		
		\begin{proof}
		Thanks to the direct sum decomposition (\ref{eq: direct sum Z odd}), it is enough to prove it for the odd zeta motives. Let us recall the definition~\cite[D\'{e}finition 1.4]{delignegoncharov} of the category~$\mathsf{MT}(\mathbb{Z})$. According to the tannakian formalism, the de Rham realization functor~$\mathsf{MT}(\Q)\rightarrow \mathsf{grVect}_\Q$ induces an equivalence of categories
		$$\mathsf{MT}(\Q)\cong \mathsf{grRep}(\mathfrak{g}^{\mathbb{Q}}_\dR)$$
		between~$\mathsf{MT}(\Q)$ and the category of graded finite-dimensional representations of a graded Lie algebra~$\mathfrak{g}^{\mathbb{Q}}_\dR$. The degree in~$\mathfrak{g}^{\mathbb{Q}}_\dR$ is half the weight. This Lie algebra is non-positively graded. The category~$\mathsf{MT}(\mathbb{Z})$ is defined as the full subcategory of~$\mathsf{MT}(\Q)$ consisting of objects~$H$ such that the degree~$-1$ component of~$\mathfrak{g}^{\mathbb{Q}}_\dR$ acts trivially on~$H_\dR$. This is obviously the case for~$\Z^{(n),\mathrm{odd}}$, which is concentrated in weights~$0$ and~$2(2k+1)$ with~$2k+1\geq 3$ by the short exact sequence (\ref{eq: short exact sequence Z odd}).
		\end{proof}
		
		\begin{rem}
		A tannakian interpretation of the odd zeta motive goes as follows. Let~$\mathfrak{g}^{\mathbb{Z},\vee}$ be the graded dual of the fundamental Lie algebra~$\mathfrak{g}^{\mathbb{Z}}$ of the tannakian category~$\mathsf{MT}(\mathbb{Z})$. It is an ind-object in~$\mathsf{MT}(\mathbb{Z})$, independent of the choice of a fiber functor~\cite[D\'{e}finition 6.1]{delignedroiteprojective}. Then one has a short exact sequence
		$$0\rightarrow \mathbb{Q}(0) \rightarrow \mathfrak{g}^{\mathbb{Z},\vee} \rightarrow \mathfrak{u}^{\mathbb{Z},\vee}\rightarrow 0\ ,$$
		where~$\mathfrak{u}^{\mathbb{Z}}$ is the pro-unipotent radical of~$\mathfrak{g}^{\mathbb{Z}}$. One views~$\Z^{\mathrm{odd}}$ inside the exact subsequence
		$$0\rightarrow \mathbb{Q}(0) \rightarrow \Z^{\mathrm{odd}} \rightarrow \mathfrak{u}^{\mathbb{Z},\mathrm{ab},\vee} \rightarrow 0\ ,$$
		where~$\mathfrak{u}^{\mathbb{Z},\mathrm{ab},\vee} \cong \bigoplus_{k\geq 1}\Q(-(2k+1))$ is the graded dual of the abelianization of~$\mathfrak{u}^{\mathbb{Z}}$.
		\end{rem}
	
\subsection{An elementary computation of the motives~$\T^{(n)}$}\label{par: elementary}
	
		We give an elementary proof of Proposition~\ref{prop: computation T}, which only uses basic algebraic topology. The proof is Hodge-theoretic, and the only drawback is that we have to use the full faithfulness of the Hodge realization (Theorem~\ref{thm: fully faithful}). Let us consider the relative homology group
		$$\T^{(n),\vee}_\B=H_n^{\mathrm{sing}}((\mathbb{C}^*)^n,\bigcup_{1\leq i\leq n}\{x_i=1\}\cup\{x_1\cdots x_n=1\})\ .$$
		By homotopy invariance, one may replace every~$\mathbb{C}^*$ by the unit circle~$S^1=\{|x|=1\} \hookrightarrow \mathbb{C}^*$ and the divisor~$\{x_1\cdots x_n=1\}$ by its intersection with~$(S^1)^n$, and we get
		$$\T^{(n),\vee}_\B \cong H_n^{\mathrm{sing}}((S^1)^n,\bigcup_{1\leq i\leq n}\{x_i=1\}\cup\{x_1\cdots x_n=1\})\ .$$
		Let us look at the projection~$[0,1]^n \rightarrow (S^1)^n, (t_1,\ldots,t_n)\mapsto (e^{2\pi i t_1},\ldots, e^{2\pi i t_n})$. Then by excision we can write
		$$\T^{(n),\vee}_\B \cong H_n^{\mathrm{sing}}([0,1]^n, \bigcup_{1\leq i\leq n}\{t_i\in\mathbb{Z}\}\cup \{t_1+\cdots +t_n\in\mathbb{Z}\})\ .$$
		This is simply the singular homology of the unit hypercube~$[0,1]^n$ relative to the union of its faces~$\{t_i=0\}$ and~$\{t_i=1\}$, for~$1\leq i\leq n$, and the hyperplanes~$\{t_1+\cdots+t_n=k\}$ for~$k=0,1,\ldots,n$. We note that these hyperplanes cut the unit hypercube into polytopes
		$$\Delta(n,k)=\{(t_1,\ldots,t_n)\in [0,1]^n \,\, | \,\, k\leq t_1+\cdots +t_n \leq k+1\}\ ,$$
		for~$k=0,\ldots,n-1$. We note that~$\Delta(n,0)$ is the usual~$n$-simplex; the polytopes~$\Delta(n,k)$ are usually called \emph{hypersimplices}.
		
		\begin{lem}\label{lem: Delta basis}
		\begin{enumerate}
		\item The classes~$[\Delta(n,k)]$, for~$k=0,\ldots,n-1$, form a basis of~$\T^{(n),\vee}_\B$.
		\item The morphism~$j^{(n),\vee}_\B:\T^{(n),\vee}_\B\rightarrow \T^{(n-1),\vee}_\B$ sends 
			\begin{enumerate}[(a)]
			\item $[\Delta(n,0)]$ to~$[\Delta(n-1,0)]$;
			\item $[\Delta(n,k)]$ to~$[\Delta(n-1,k)]-[\Delta(n-1,k-1)]$ for~$k=1,\ldots,n-2$.
			\item $[\Delta(n,n-1)]$ to~$-[\Delta(n-1,n-2)]$.
			\end{enumerate}
		\end{enumerate}
		\end{lem}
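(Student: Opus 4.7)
The plan is to handle the two parts separately: (1) by recognizing the hypersimplex decomposition as a CW-type structure adapted to the relative pair, and (2) by an explicit boundary computation combined with careful sign bookkeeping.

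For part (1), the key observation is that the internal hyperplanes $\{t_1+\cdots+t_n = k\}$ for $k = 1, \ldots, n-1$ are already contained in the relative subspace $A := \bigcup_i \{t_i \in \mathbb{Z}\} \cup \{t_1+\cdots+t_n \in \mathbb{Z}\}$, since each such hyperplane is a component of $\{t_1+\cdots+t_n \in \mathbb{Z}\} \cap [0,1]^n$. Together with the boundary faces $\{t_i = 0\}$ and $\{t_i = 1\}$ of the cube, this means that the entire $(n-1)$-skeleton of the polyhedral decomposition of $[0,1]^n$ into the hypersimplices $\Delta(n,0), \ldots, \Delta(n,n-1)$ lies in $A$. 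Consequently, the quotient $[0,1]^n / A$ is homeomorphic to the wedge $\bigvee_{k=0}^{n-1} \Delta(n,k)/\partial\Delta(n,k) \cong \bigvee_{k=0}^{n-1} S^n$, so $\T^{(n),\vee}_\B \cong \tilde{H}_n([0,1]^n/A) \cong \Q^n$ with basis $\{[\Delta(n,k)]\}_{k=0}^{n-1}$.

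For part (2), I would invoke the description of $j^{(n),\vee}_\B$ analogous to the description of $i^{(n),\vee}_\B$ given in \S\ref{par: inductive}: given a relative cycle $\sigma$ in $T^n$, the class $j^{(n),\vee}_\B([\sigma])$ is represented by the boundary component of $\sigma$ lying on the subtorus $T^{n-1}$, identified with $\{x_1\cdots x_n = 1\} \subset T^n$. Under the exponential identification, this subtorus pulls back to the locus $\{t_1+\cdots+t_n \in \mathbb{Z}\}$ inside $[0,1]^n$. For a hypersimplex $\Delta(n,k)$, only the hyperplane faces at $\{t_1+\cdots+t_n = k\}$ and $\{t_1+\cdots+t_n = k+1\}$ lie on this locus; the cube-face components contribute to the other subtori $T^{n-1}_i = \{x_i = 1\}$ and hence vanish under $j^{(n),\vee}_\B$.

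It remains to identify each non-degenerate hyperplane face, pushed forward to $T^{n-1}$ and then pulled back to $[0,1]^{n-1}$ via the projection $(t_1,\ldots,t_n)\mapsto(t_1,\ldots,t_{n-1})$. Parametrizing $\{t_1+\cdots+t_n = k+1\} \cap [0,1]^n$ by $(t_1,\ldots,t_{n-1})$ with $t_n = k+1 - t_1 - \cdots - t_{n-1}$, the constraint $t_n \in [0,1]$ yields exactly the polytope $\Delta(n-1,k)$; likewise the bottom face projects to $\Delta(n-1,k-1)$. The top face degenerates to the point $(1,\ldots,1)$ when $k = n-1$, and the bottom face degenerates to the origin when $k = 0$, which produces the three cases (a), (b), (c). The main obstacle is the sign bookkeeping: the outward conormal $+(1,\ldots,1)$ at the top face and $-(1,\ldots,1)$ at the bottom force opposite boundary orientations, giving the relative $+$ and $-$ signs in (b); one then fixes the overall sign ambiguity from Remark~\ref{rem: signs} by calibrating case (a) to read $[\Delta(n,0)] \mapsto +[\Delta(n-1,0)]$, after which (b) and (c) follow uniformly.
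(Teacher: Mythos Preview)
Your argument for part (1) is essentially the paper's: collapsing the relative boundary produces a wedge of $n$ spheres, one per hypersimplex.

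For part (2) you take a genuinely different route. The paper identifies $j^{(n),\vee}_\B$ with the boundary component living on the cube face $\{x_n=1\}$, i.e.\ on $\{t_n=0\}\cup\{t_n=1\}$; intersecting $\Delta(n,k)$ with $\{t_n=0\}$ and $\{t_n=1\}$ yields $\Delta(n-1,k)$ and $\Delta(n-1,k-1)$ \emph{directly} in the remaining coordinates $(t_1,\ldots,t_{n-1})$, with no projection step and no orientation subtlety beyond an overall sign. You instead use the subtorus $T^{n-1}_{n+1}=\{x_1\cdots x_n=1\}$, whose lift is the union of the diagonal hyperplanes $\{t_1+\cdots+t_n\in\mathbb{Z}\}$. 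Your choice is in fact the one dictated literally by the long exact sequence \eqref{eq: long exact sequence T} (the subtorus distinguishing $\T^{(n)}$ from ${}'\T^{(n)}$ is $T^{n-1}_{n+1}$, not $T^{n-1}_n$), and it gives the right answer after the projection $(t_1,\ldots,t_n)\mapsto(t_1,\ldots,t_{n-1})$ that you describe. The cost is an extra step checking that this projection carries the top and bottom diagonal faces onto $\Delta(n-1,k)$ and $\Delta(n-1,k-1)$ with the correct induced orientations, whereas the paper's cube-face approach makes these identifications trivial. Both approaches are valid; the paper's is shorter, yours is arguably better aligned with the definitions.
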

		
		\begin{proof}
		\begin{enumerate}
		\item This is clear by excision, since collapsing the boundary of~$[0,1]^n$ and the hyperplanes~$\{t_1+\cdots+t_n=k\}$ onto a point creates a wedge sum of~$n$ spheres of dimension~$n$, one for each hypersimplex.
		\item Recall (see Remark~\ref{rem: inductive T} and \S\ref{par: inductive}) that~$j^{(n),\vee}_\B$ computes \enquote{the component of the boundary that lives on~$\{x_n=1\}$}. In the~$t$-coordinates,~$\{x_n=1\}$ corresponds to~$\{t_n=0\}$ (counted positively) and~$\{t_n=1\}$ (counted negatively). In case (b), the intersection of~$\Delta(n,k)$ with~$\{t_n=0\}$ is~$\Delta(n-1,k)$ and its intersection with~$\{t_n=1\}$ is~$\Delta(n-1,k-1)$, which proves the claim. Cases (a) and (c) are similar.
		\end{enumerate}
		\end{proof}
		
		\begin{rem}
		One may check that the sum of the classes~$[\Delta(n,k)]$, for~$k=0,\ldots,n-1$, is sent to~$0$ by the morphism~$j^{(n),\vee}_\B$. This is because this sum is represented by the unit square~$[0,1]^n$ in the~$t$-coordinates, or by the compact~$n$-torus~$(S^1)^n\subset (\C^*)^n$ in the~$x$-coordinates, which has empty boundary.
		\end{rem}
		
		The \emph{Eulerian numbers} are the coefficients of the Eulerian polynomials and are denoted by symbols~$\eulerian{n}{k}$:
		$$E_n(x)=\sum_{k=0}^{n-1}\eulerian{n}{k}\, x^k\ .$$
		They satisfy many beautiful identities, including the following recursion, which can be deduced from (\ref{eqrecurrenceE}):
		$$\eulerian{n}{k}=(n-k)\eulerian{n-1}{k-1} + (k+1)\eulerian{n-1}{k}\ .$$
		The following lemma is a classical result due to Laplace~\cite[\S 2]{foatadistributions}.
		
		\begin{lem}\label{lem: laplace}
		For~$k=0,\ldots,n-1$, the volume of the hypersimplex~$\Delta(n,k)$ is the ratio~$\dfrac{\eulerian{n}{k}}{n!}$.
		\end{lem}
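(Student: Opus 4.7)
The plan is to use Laplace's classical approach via a measure-preserving change of variables that converts the hypersimplex condition into a condition on descents of a sequence in $[0,1]^n$. First I would define the map $\phi \colon [0,1]^n \to [0,1]^n$ sending $(t_1,\ldots,t_n)$ to $(u_1,\ldots,u_n)$, where $u_i := \{s_i\}$ is the fractional part of the partial sum $s_i := t_1+\cdots+t_i$. A direct computation shows that $\phi$ is, almost everywhere, a measure-preserving bijection: on each of the regions in $[0,1]^n$ where the integer parts $\lfloor s_i\rfloor$ are constant, $\phi$ is a triangular affine map with unit Jacobian, and its inverse sends $(u_1,\ldots,u_n)$ to the tuple with $t_i = u_i - u_{i-1}$ if $u_{i-1}\leq u_i$ and $t_i = u_i - u_{i-1} + 1$ otherwise (setting $u_0 := 0$).

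Next I would identify the image $\phi(\Delta(n,k))$ explicitly. Since each $t_i\in[0,1]$, the sequence of integer parts $\lfloor s_0\rfloor,\ldots,\lfloor s_n\rfloor$ is non-decreasing with increments in $\{0,1\}$, and an increment at step $i$ occurs precisely when $u_{i-1}>u_i$. The condition $(t_1,\ldots,t_n)\in\Delta(n,k)$, equivalent to $\lfloor s_n\rfloor=k$, thus translates into: the sequence $(0,u_1,\ldots,u_n)$ has exactly $k$ descents. Since the initial comparison $0\leq u_1$ is (generically) never a descent, this amounts to saying that $(u_1,\ldots,u_n)$ has exactly $k$ indices $i\in\{1,\ldots,n-1\}$ with $u_i>u_{i+1}$.

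Finally, I would compute the volume of the set $D_k\subset [0,1]^n$ of such sequences by partitioning the cube into $n!$ open regions indexed by the unique permutation $\sigma\in S_n$ sorting a generic tuple; each region has volume $1/n!$, and by the combinatorial definition of the Eulerian numbers, exactly $\eulerian{n}{k}$ of these regions correspond to permutations with $k$ descents. Assembling the three steps gives $\mathrm{vol}(\Delta(n,k)) = \mathrm{vol}(D_k) = \eulerian{n}{k}/n!$.

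The only genuine obstacle is bookkeeping: one must verify the Jacobian computation on each chamber and check that the measure-zero locus where some $t_i\in\{0,1\}$ or some $u_i$ coincide can be safely discarded. An alternative purely combinatorial route would be inclusion--exclusion combined with the Worpitzky identity $\eulerian{n}{k}=\sum_{j=0}^{k}(-1)^j\binom{n+1}{j}(k+1-j)^n$, but I find the bijective argument above shorter and more transparent.
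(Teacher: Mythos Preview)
Your proof is correct. The fractional-part map $\phi$ is indeed a piecewise affine measure-preserving bijection of $[0,1]^n$ (up to a null set), and your identification of the image of $\Delta(n,k)$ with the set of tuples having exactly $k$ descents is accurate; the final step counting permutations with $k$ descents is exactly the combinatorial definition of the Eulerian numbers.

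For comparison with the paper: there is nothing to compare, because the paper does not prove this lemma. It simply records it as ``a classical result due to Laplace'' and moves on. So you have supplied a complete argument where the paper gives none. One minor historical remark: the bijective argument via fractional parts that you sketch is usually attributed to Stanley (1977), while Laplace's original computation proceeded by inclusion--exclusion, essentially the Worpitzky-type formula you mention at the end as an alternative. Either route is fine here; the bijective one is certainly the cleaner of the two.
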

		
		Recall from Remark~\ref{rem: basis TdR} that, for every integer~$n\geq 1$,~$\T^{(n)}_\dR$ has a basis~$(w_1^{(n)},\ldots,w_n^{(n)})$ which is compatible with the weight grading and with the morphisms~$j^{(n)}_\dR:\T^{(n-1)}_\dR\rightarrow \T^{(n)}_\dR$. We let~$P_n$ be the period matrix of~$\T^{(n)}$ with respect to the~$w$-basis and the~$\Delta$-basis from Lemma~\ref{lem: Delta basis}. The first period matrix~$P_1$ is simply the~$1\times 1$ matrix~$(2\pi i)$. Let us introduce the following~$n\times n$ integer matrix encoding the family of Eulerian numbers:
		$$
		A_n=\left( \begin{array}{ccccccc}
		1 &  &  &  &  &  & \eulerian{n}{0} \\
		-1 & 1 &  &  & 0 &  & \eulerian{n}{1} \\
		 & -1&1 &  &  &  & \eulerian{n}{2} \\
		 &&\ddots  & \ddots &  &  & \\
		 &&  & \ddots & \ddots &  & \\
		 &0&&  &  -1 & 1 & \eulerian{n}{n-2}  \\
		 &&  &  &  &  -1& \eulerian{n}{n-1} 
		\end{array} \right)\cdot
		$$
		
		
		\begin{prop}\label{prop: recurrence P}
		The period matrices~$P_n$ satisfy the recurrence relation 
		$$P_n=A_n\left( \begin{array}{ccccc|c}
		   &  &  &  &  & 0 \\
		  & &   &  &  & 0 \\
		 &  & P_{n-1} &  &  & \vdots \\
		 &  &  &  &  & \\
		 &&  &  &  & 0  \\ \hline
		  0& 0  &\hdots  &  &  0 & \dfrac{(2\pi i)^n}{n!} 
		\end{array} \right)\cdot$$
		\end{prop}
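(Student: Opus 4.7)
The plan is to verify the matrix equality $P_n = A_n\cdot B_n$ (writing $B_n$ for the block matrix on the right) column by column, treating the last column of $P_n$ separately from the first $n-1$ columns. In my convention the $(k,i)$-entry of $P_n$ is $\langle[\Delta(n,k-1)],w_i^{(n)}\rangle$, so rows are indexed by hypersimplices and columns by de~Rham basis elements.

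For the last column I would use Remark~\ref{rem: basis TdR}, which represents $w_n^{(n)}$ by the explicit form $\mathrm{dlog}(x_1)\wedge\cdots\wedge\mathrm{dlog}(x_n)$. Under the substitution $x_j=e^{2\pi i t_j}$ used in \S\ref{par: elementary} to identify $\T^{(n),\vee}_\B$ with the relative singular homology of $[0,1]^n$, this form pulls back to $(2\pi i)^n\, dt_1\cdots dt_n$, so
\[
\langle[\Delta(n,k-1)],w_n^{(n)}\rangle \;=\; (2\pi i)^n\int_{\Delta(n,k-1)}dt_1\cdots dt_n \;=\; \frac{(2\pi i)^n}{n!}\eulerian{n}{k-1},
\]
the last step being Laplace's formula (Lemma~\ref{lem: laplace}). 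Since the last column of $B_n$ vanishes except for its bottom entry $\frac{(2\pi i)^n}{n!}$, the last column of $A_n B_n$ equals $\frac{(2\pi i)^n}{n!}$ times the last column of $A_n$, which is exactly the column of Eulerian numbers computed above.

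For the first $n-1$ columns I would use the compatibility of the $w$-bases with $j^{(n)}_\dR$: for $i\leq n-1$ one has $w_i^{(n)}=j^{(n)}_\dR(w_i^{(n-1)})$, hence by duality $\langle[\Delta(n,k-1)],w_i^{(n)}\rangle=\langle j^{(n),\vee}_\B([\Delta(n,k-1)]),w_i^{(n-1)}\rangle$. Expanding the right-hand side in the $\Delta$-basis of $\T^{(n-1),\vee}_\B$ by applying the three cases of Lemma~\ref{lem: Delta basis}(2) yields, respectively, $P_{n-1}[1,i]$ for $k=1$, the difference $P_{n-1}[k,i]-P_{n-1}[k-1,i]$ for $1<k<n$, and $-P_{n-1}[n-1,i]$ for $k=n$. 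This matches exactly the product of the first $n-1$ columns of $A_n$ (carrying $1$'s on the diagonal in rows $1,\dots,n-1$ and $-1$'s on the subdiagonal in rows $2,\dots,n$) against the block of $B_n$ containing $P_{n-1}$ in its top-left $(n-1)\times(n-1)$ corner and zeros in the last row.

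No serious obstacle is present: the whole argument is a bookkeeping verification of an already transparent recurrence. The only points requiring care are the row/column convention for the period matrix and the correct matching of the three cases of Lemma~\ref{lem: Delta basis}(2) with the bidiagonal pattern of $A_n$, in particular the fact that the bottom row of $A_n$ contains no diagonal $1$, so that its contribution to the first $n-1$ columns of $A_n B_n$ comes solely from the $-1$ in position $(n,n-1)$.
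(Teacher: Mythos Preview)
Your proposal is correct and follows essentially the same approach as the paper's own proof: both split the verification into the first $n-1$ columns (handled via the compatibility of $j^{(n)}$ with the $w$-bases together with Lemma~\ref{lem: Delta basis}(2)) and the last column (handled via the explicit representative $\mathrm{dlog}(x_1)\wedge\cdots\wedge\mathrm{dlog}(x_n)$, the change of variables $x_j=e^{2\pi i t_j}$, and Laplace's volume formula). Your write-up simply makes explicit the bookkeeping that the paper leaves implicit.
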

		
		\begin{proof}
		Recall the short exact sequence (\ref{eq: short exact sequence T})
		$$0\rightarrow \T^{(n-1)} \stackrel{j^{(n)}}{\longrightarrow} \T^{(n)} \rightarrow H^n(T^n)\rightarrow 0$$
		and the fact (see Remark~\ref{rem: basis TdR}) that the morphism~$j^{(n)}$ is compatible with the~$w$-bases. Then Lemma~\ref{lem: Delta basis} shows that the first~$(n-1)$ columns of~$P_n$ are as stated. It only remains to compute the entries in the last column, i.e., compute the integral of the~$n$-form~$\frac{dx_1}{x_1}\wedge \cdots \wedge \frac{dx_n}{x_n}$ on a hypersimplex~$\Delta(n,k)$. After the change of variables~$(x_1,\ldots,x_n)=(e^{2\pi i t_1},\ldots,e^{2\pi i t_n})$, one sees that this integral is simply~$(2\pi i)^n$ times the volume of~$\Delta(n,k)$, and completes the proof thanks to Lemma~\ref{lem: laplace}.
		\end{proof}
		
		We note that the period matrices~$P_n$ are not block upper-triangular. This is because the~$\Delta$-basis is not compatible with the weight filtration. We thus have to introduce a change of basis. Let~$(Q_n)_{n\geq 1}$ be the family of matrices (with rational entries) defined by~$Q_1=(1)$ and the recurrence relation
		$$Q_n=\left( \begin{array}{ccccc|c}
		   &  &  &  &  & 0 \\
		  & &   &  &  & 0 \\
		 &  & Q_{n-1} &  &  & \vdots \\
		 &  &  &  &  & \\
		 &&  &  &  & 0  \\ \hline
		  0& 0  &\hdots  &  &  0 & n!
		\end{array} \right) A_n^{-1}\ .$$
		
		The first terms are 
		$$Q_1=\left(\begin{array}{c} 1 \end{array}\right)  ,  Q_2= \left(\begin{array}{cc} \frac{1}{2} & -\frac{1}{2} \\ 1 & 1 \end{array}\right) , Q_3= \left(\begin{array}{ccc} \frac{1}{3} & -\frac{1}{6} & \frac{1}{3} \\ 1 & 0 & -1 \\ 1 & 1 & 1 \end{array}\right)  ,  Q_4= \left(\begin{array}{cccc} \frac{1}{4} & -\frac{1}{12} & \frac{1}{12} & -\frac{1}{4}\\ \frac{11}{12} & -\frac{1}{12} & -\frac{1}{12} & \frac{11}{12} \\ \frac{3}{2} & \frac{1}{2} & -\frac{1}{2} & -\frac{3}{2} \\ 1 & 1 & 1 & 1 \end{array}\right) .$$		
		
		Let us put
		$$ \begin{pmatrix} \Sigma_1^{(n)}\\ \Sigma_2^{(n)}\\ \vdots\\ \Sigma_n^{(n)} \end{pmatrix} = Q_n \begin{pmatrix} \Delta(n,0)\\ \Delta(n,1)\\ \vdots\\ \Delta(n,n-1) \end{pmatrix}\ .$$
		We view~$\Sigma_k^{(n)}$ as a relative cycle with rational coefficients. The change of indexing is here to remind the reader that~$\Sigma_k^{(n)}$ lives in weight less than or equal to $2k$. We have thus proved the following result.
		
		\begin{prop}\label{prop: Sigma basis}
		The classes~$[\Sigma_k^{(n)}]$, for~$k=1,\ldots,n$, form a basis of~$\T^{(n),\vee}_\B$ and the period matrix of~$\T^{(n)}$ in the~$w$-basis and the~$\Sigma$-basis is the diagonal matrix~$\mathrm{Diag}(2\pi i, \ldots, (2\pi i)^n)$.
		\end{prop}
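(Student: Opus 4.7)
The proposition has two parts: (1) the classes $[\Sigma_k^{(n)}]$ form a basis of $\T^{(n),\vee}_\B$, and (2) the period matrix in the $w$- and $\Sigma$-bases is diagonal with entries $(2\pi i)^k$. The strategy is to prove both by induction on $n$, exploiting the fact that the matrices $Q_n$ were defined precisely so as to cancel against the factor $A_n$ appearing in the recurrence of Proposition~\ref{prop: recurrence P}.

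For the basis statement, I would first check that each $Q_n$ is invertible. Since $Q_1=(1)$ and the recurrence expresses $Q_n$ as a product involving $Q_{n-1}$, a diagonal block, and $A_n^{-1}$, it suffices to check that $A_n$ is invertible. This follows from a direct determinant computation: row-reducing $A_n$ by successively adding row $k$ to row $k+1$ clears the $-1$'s on the subdiagonal and transforms the bottom-right entry into $\sum_{k=0}^{n-1}\eulerian{n}{k}=n!$, so $\det A_n = n!\neq 0$. Invertibility of $Q_n$ then implies that the $[\Sigma_k^{(n)}]$, being obtained from the $[\Delta(n,k)]$ via the invertible change of basis $Q_n$, form a basis of $\T^{(n),\vee}_\B$ by Lemma~\ref{lem: Delta basis}.

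For the period matrix, the key point is that a change of Betti basis $\Sigma^{(n)}=Q_n\Delta^{(n)}$ transforms the period matrix as $P_n\mapsto Q_nP_n$ (rows of $P_n$ correspond to Betti classes, as one reads off the shape of the recurrence in Proposition~\ref{prop: recurrence P}). Combining the recurrence for $Q_n$ with that for $P_n$, one computes
\[
Q_nP_n \;=\; \begin{pmatrix} Q_{n-1} & 0 \\ 0 & n! \end{pmatrix} A_n^{-1} \, A_n \begin{pmatrix} P_{n-1} & 0 \\ 0 & \tfrac{(2\pi i)^n}{n!} \end{pmatrix} \;=\; \begin{pmatrix} Q_{n-1}P_{n-1} & 0 \\ 0 & (2\pi i)^n \end{pmatrix},
\]
so that the induction hypothesis $Q_{n-1}P_{n-1}=\mathrm{Diag}(2\pi i,\ldots,(2\pi i)^{n-1})$ yields $Q_nP_n=\mathrm{Diag}(2\pi i,\ldots,(2\pi i)^n)$. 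The base case $n=1$ is immediate, since $Q_1=(1)$ and $P_1=(2\pi i)$.

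The argument is essentially a routine matrix manipulation, and no step poses a serious technical obstacle. The only point requiring care is keeping straight the row/column conventions for the period matrix---specifically, verifying that a change of Betti basis $\Sigma^{(n)}=Q_n\Delta^{(n)}$ acts on $P_n$ by left multiplication by $Q_n$ rather than by right multiplication by $Q_n^T$. Once this is pinned down, the two recurrences mesh exactly as they were designed to.
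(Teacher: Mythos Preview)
Your proposal is correct and follows essentially the same approach as the paper: the paper's proof simply states that the claim amounts to showing $Q_nP_n=\mathrm{Diag}(2\pi i,\ldots,(2\pi i)^n)$, which is proved by induction on~$n$ using Proposition~\ref{prop: recurrence P}. Your additional verification that $\det A_n=n!$ is a nice supplement (it justifies the definition of $Q_n$ via $A_n^{-1}$), though strictly speaking the invertibility of $Q_n$ already follows once you know $Q_nP_n$ is an invertible diagonal matrix.
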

		
		\begin{proof}
		This amounts to saying that the product~$Q_nP_n$ is the matrix~$\mathrm{Diag}(2\pi i, \ldots, (2\pi i)^n)$, which is easily proved by induction on~$n$ using Proposition~\ref{prop: recurrence P}.
		\end{proof}
		
		By using Theorem~\ref{thm: fully faithful}, we thus get an alternate (Hodge-theoretic) proof of Proposition~\ref{prop: computation T}. 
		
		\begin{rem}\label{rem: Sigma Psi}
		Proposition~\ref{prop: Sigma basis} implies that we can choose~$(\Sigma^{(n-1)}_1,\ldots,\Sigma^{(n-1)}_{n-1})$ as representatives for the classes~$(\psi^{(n-1)}_1,\ldots,\psi^{(n-1)}_{n-1})$ from the proof of Theorem~\ref{thm: period matrix Z}.
		\end{rem}
		
		\begin{rem}\label{rem: Sigma torus}
		One can easily prove that the last row of the matrix~$Q_n$ is filled with~$1$s, which means that~$\Sigma_n^{(n)}$ is homologous to the unit hypercube~$[0,1]^n$. In the~$x$-coordinates, it is homologous to the compact~$n$-torus~$(S^1)^n\subset (\C^*)^n$.
		\end{rem}

\section{Linear forms in zeta values}\label{section6}

	We apply our results from the previous section to prove Theorems~\ref{thm: coefficients intro} and~\ref{thm: vanishing intro} from the Introduction.
	
	\subsection{Integral formulas for the coefficients}
	
		\begin{thm}\label{thm: coefficients}
		For~$\omega$ an integrable algebraic differential form on~$X_n-A_n$, we have
		\begin{equation}\label{eq: integral coefficients a}
		\int_{[0,1]^n}\omega=a_0(\omega)+a_2(\omega)\zeta(2)+\cdots+a_n(\omega)\zeta(n)
		\end{equation}
		with~$a_k(\omega)$ a rational number for every~$k$, given for~$k=2,\ldots,n$ by the formula
		\begin{equation}\label{eq: coefficients a sigma}
		a_k(\omega)=(2\pi i)^{-k}\,\langle\varphi_k^{(n)},[\omega]\rangle \ .
		\end{equation}
		\end{thm}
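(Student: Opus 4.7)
The plan is to deduce this directly from the period matrix computation in Theorem~\ref{thm: period matrix Z}. By Proposition~\ref{prop: poles}, the integrable form $\omega$ defines a class $[\omega]\in\Z^{(n)}_\dR$, and by Theorem~\ref{thm: period matrix Z}(1) the Eulerian classes $(v_0^{(n)},v_2^{(n)},\ldots,v_n^{(n)})$ form a basis of $\Z^{(n)}_\dR$. So as a first step I would expand
$$[\omega]=a_0(\omega)\,v_0^{(n)}+a_2(\omega)\,v_2^{(n)}+\cdots+a_n(\omega)\,v_n^{(n)}$$
with uniquely determined coefficients $a_k(\omega)\in\Q$ (rationality comes from the fact that this is a basis of the $\Q$-vector space $\Z^{(n)}_\dR$, not merely its complexification). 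This defines the $a_k(\omega)$.

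Next I would pair with the Betti class $\varphi_0^{(n)}=[\square^n]$. By Theorem~\ref{thm: period matrix Z}(2), the first row of the period matrix is $(1,\zeta(2),\zeta(3),\ldots,\zeta(n))$, so
$$\int_{[0,1]^n}\omega=\langle[\square^n],[\omega]\rangle=a_0(\omega)+a_2(\omega)\zeta(2)+\cdots+a_n(\omega)\zeta(n),$$
which is the identity~(\ref{eq: integral coefficients a}).

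Finally, for the integral formula~(\ref{eq: coefficients a sigma}), I would pair with $\varphi_k^{(n)}$ for $k\in\{2,\ldots,n\}$. Inspecting the period matrix~(\ref{eq: period matrix Z}): every entry below or above the diagonal is zero except in the first row. Concretely $\langle\varphi_k^{(n)},v_j^{(n)}\rangle=(2\pi i)^k$ if $j=k$ and vanishes if $j\neq k$ (here one uses that for $j=0$ the entry is zero, and for $j\neq 0,k$ it is also zero by the block-diagonal shape away from the top row). Therefore
$$\langle\varphi_k^{(n)},[\omega]\rangle=\sum_{j\in\{0,2,\ldots,n\}}a_j(\omega)\langle\varphi_k^{(n)},v_j^{(n)}\rangle=a_k(\omega)\,(2\pi i)^k,$$
which rearranges to~(\ref{eq: coefficients a sigma}).

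There is no real obstacle here: the theorem is essentially a restatement of Theorem~\ref{thm: period matrix Z} applied to the class $[\omega]$, once one has expanded it in the Eulerian basis. The only subtle point is insisting that the coefficients $a_k(\omega)$ are rational, but this is automatic since $(v_0^{(n)},v_2^{(n)},\ldots,v_n^{(n)})$ is a $\Q$-basis of $\Z^{(n)}_\dR$ and $[\omega]$ belongs to this $\Q$-vector space (the form $\omega$ has rational coefficients).
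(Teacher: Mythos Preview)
Your proof is correct and follows essentially the same approach as the paper: expand $[\omega]$ in the Eulerian basis $(v_0^{(n)},v_2^{(n)},\ldots,v_n^{(n)})$ of $\Z^{(n)}_\dR$ to define the rational coefficients $a_k(\omega)$, then pair with $\varphi_0^{(n)}=[\square^n]$ to obtain~(\ref{eq: integral coefficients a}) and with $\varphi_k^{(n)}$ for $k\geq 2$ to obtain~(\ref{eq: coefficients a sigma}), using the shape of the period matrix from Theorem~\ref{thm: period matrix Z}.
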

		
		\begin{proof}
		According to Proposition~\ref{prop: poles}, the class~$[\omega]$ defines an element in~$\Z_{n,\dR}$, hence we may write
		$$[\omega]=a_0(\omega)v_0+a_2(\omega)v_2+\cdots+a_n(\omega)v_n$$
		with~$a_k(\omega)\in\Q$ for every~$k$. Pairing with the class~$\varphi_0^{(n)}=[\square^n]$ gives the equality (\ref{eq: integral coefficients a}), and pairing with the class~$\varphi_k^{(n)}$,~$k=2,\ldots,n$, gives the equality (\ref{eq: coefficients a sigma}).
		\end{proof}
		
		\begin{rem}\label{rem: tubular}
		If we represent the class~$\varphi_k^{(n)}$ by a relative cycle~$\sigma_k^{(n)}$, then (\ref{eq: coefficients a sigma}) becomes 
		$$a_k(\omega)=(2\pi i)^{-k}\int_{\sigma_k^{(n)}}\omega\ .$$
		
		Here we will not give explicit representatives for the classes~$\varphi_k^{(n)}$. Recall from the proof of Theorem~\ref{thm: period matrix Z} that the class~$\varphi_k^{(n)}$ is the image by the map~$p_\B^{(n),\vee}:\T^{(n-1),\vee}_\B\rightarrow \Z^{(n),\vee}_\B$ of an element~$\psi_{k-1}^{(n-1)}$, which by Remark~\ref{rem: Sigma Psi} can be represented by the cycle~$\Sigma_{k-1}^{(n-1)}$. The question is then how to compute the map~$p_B^{(n),\vee}$ at the level of cycles. Such a task would involve the following ingredients. Let~$T\subset \C^n$ be a tubular neighborhood of~$A_n(\C)$ in~$\C^n$. Let us denote by~$\rho:T\rightarrow A_n(\C)$ the corresponding projection, and by~$\partial\rho:\partial T\rightarrow A_n(\C)$ the projection corresponding to the boundary of the tubular neighborhood; it is an~$S^1$-bundle. The natural map~$H^{\mathrm{sing}}_r(A_n(\C))\rightarrow H^{\mathrm{sing}}_{r+1}(\C^n-A_n(\C))$ can be computed at the level of singular chains by mapping an~$r$-cycle~$\sigma$ to the~$(r+1)$-cycle~$(\partial\rho)^{-1}(\sigma)$. We note that since~$A_n(\C)$ does not intersect the hyperplanes~$\{x_i=0\}$, we can do the computation with a tubular neighborhood inside~$(\C^*)^n$ and get representatives in~$(\C^*)^n$. Now if we want to play this game for the relative homology groups~$\Z^{(n),\vee}_\B$, we need the tubular neighborhood to be \enquote{compatible} with the subvariety~$B_n(\C)$, in the sense that~$\rho$ should pull back~$A_n(\C)\cap B_n(\C)$ to~$B_n(\C)$. At this point, it is probably easier to ask for something weaker than a tubular neighborhood, i.e., something that is a tubular neighborhood on a dense open subset of~$A_n(\C)$ (this does not change anything for the integral formulas). We will not try to give formulas here and postpone this discussion to a future article. Nevertheless, we can give more explicit formulas than (\ref{eq: coefficients a sigma}) in two situations.
		\end{rem}
		
		\subsubsection{The highest weight coefficient}

		Let us fix real numbers~$\rho_1,\ldots,\rho_{n-1},\rho_n>0$ and let us introduce the cycle~$S^{(n)}\subset \C^n-A_n(\C)$ defined by the conditions
		$$|x_1|=\rho_1, \ldots, \, |x_{n-1}|=\rho_{n-1}, \, \left|x_n-\frac{1}{x_1\cdots x_{n-1}}\right|=\rho_n\ .$$
		
		\begin{prop}
		Let~$\omega$ be an integrable differential form on~$X_n-A_n$. Then the highest weight coefficient~$a_n(\omega)$ from Theorem~\ref{thm: coefficients} is given by the integral formula
		$$a_n(\omega)=(2\pi i)^{-n}\, \int_{S^{(n)}}\omega\ .$$
		\end{prop}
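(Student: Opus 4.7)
By Theorem~\ref{thm: coefficients} we have $a_n(\omega)=(2\pi i)^{-n}\langle\varphi_n^{(n)},[\omega]\rangle$, so it suffices to exhibit $S^{(n)}$ (with appropriate orientation) as a cycle representative of the class $\varphi_n^{(n)}\in \Z^{(n),\vee}_\B$. The plan is to trace through the construction of $\varphi_n^{(n)}$ in the proof of Theorem~\ref{thm: period matrix Z} and realize each step geometrically on cycles. First, from that construction, $\varphi_n^{(n)}=p^{(n),\vee}_\B(\psi^{(n-1)}_{n-1})$, where the class $\psi^{(n-1)}_{n-1}$ is, by Remarks~\ref{rem: Sigma Psi} and~\ref{rem: Sigma torus}, represented by the compact $(n-1)$-torus inside $A_n(\C)$. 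Deforming the radii from $1$ to $\rho_1,\ldots,\rho_{n-1}>0$ by a homotopy that stays inside $T^{n-1}(\C)-\bigcup_i T^{n-2}_i(\C)$ (easy to arrange by choosing the $\rho_i$ generic), we obtain the explicit representative
\[
C^{(n-1)}=\bigl\{(x_1,\ldots,x_n)\,:\,|x_i|=\rho_i\ \text{for}\ i<n,\ \ x_n=1/(x_1\cdots x_{n-1})\bigr\}\subset A_n(\C).
\]

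Second, I would describe the transpose of the Gysin connecting map $p^{(n),\vee}_\B$ on cycles along the lines sketched in Remark~\ref{rem: tubular}: for a cycle $\sigma$ on $A_n(\C)$ that avoids $B_n(\C)\cap A_n(\C)$, one picks a tubular neighborhood $T$ of $A_n(\C)$ inside $(\C^*)^n$ and sends $\sigma$ to the associated $S^1$-bundle $(\partial\rho)^{-1}(\sigma)\subset(\C^*)^n-A_n(\C)$. Since $C^{(n-1)}$ avoids all of the hyperplanes $\{x_i=0\}$ and $\{x_i=1\}$ for suitable $\rho_i$, we do not need any compatibility of $T$ with $B_n(\C)$: the resulting cycle automatically lies in $(\C^*)^n-(A_n\cup B_n)(\C)$, hence defines a class in $\Z^{(n),\vee}_\B$ via Proposition~\ref{prop: excision}.

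Third, I would choose the tubular neighborhood explicitly. On $(\C^*)^n$ the hypersurface $A_n$ is cut out by the function $f(x)=x_n-1/(x_1\cdots x_{n-1})$ (which differs from $1-x_1\cdots x_n$ by the unit $-x_1\cdots x_{n-1}$), and $T=\{|f|<\rho_n\}$ is a tubular neighborhood of $A_n(\C)$ in a neighborhood of $C^{(n-1)}$, with projection $\rho$ given by setting $x_n=1/(x_1\cdots x_{n-1})$. The $S^1$-bundle boundary $(\partial\rho)^{-1}(C^{(n-1)})$ is then exactly $S^{(n)}$, so $[S^{(n)}]=p^{(n),\vee}_\B([C^{(n-1)}])=\varphi_n^{(n)}$ up to a sign determined by our orientation convention on the fiber circle. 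I would fix the sign once and for all by checking the identity $\int_{S^{(n)}}\omega_n^{(n)}=(2\pi i)^n$ directly via Fubini on $S^{(n)}$, which is essentially the Cauchy residue computation since $\omega_n^{(n)}=\frac{dx_1\cdots dx_n}{1-x_1\cdots x_n}$ has a simple pole along $A_n$.

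The main obstacle will be making the cycle-level description of $p^{(n),\vee}_\B$ fully rigorous, i.e.\ justifying that the Leray tube construction really realizes the connecting map of the Gysin long exact sequence in Proposition~\ref{prop: Gysin long exact sequence}. This is a well-known classical fact, but it has to be stated carefully in the relative setting of the pair $(X_n-\accentset{\circ}{A}_n,B_n)$; once we note that $C^{(n-1)}$ avoids the stratum $B_n\cap A_n$, the relative issue disappears and the computation reduces to the absolute Gysin/tube construction in $(\C^*)^n$, whose compatibility with the connecting map is standard. Everything else in the argument is a direct matching of the explicit cycle $S^{(n)}$ with the tube over $C^{(n-1)}$, plus the orientation normalization.
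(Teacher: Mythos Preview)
Your proposal is correct and follows essentially the same route as the paper's proof: represent $\psi_{n-1}^{(n-1)}$ by the compact $(n-1)$-torus in $A_n(\C)$ (Remark~\ref{rem: Sigma torus}), then realize $p^{(n),\vee}_\B$ via the Leray tube over an explicit tubular neighborhood given by $|x_n-1/(x_1\cdots x_{n-1})|\leq\rho_n$, recovering $S^{(n)}$. The paper is slightly terser, noting simply that the torus has empty boundary so any tubular neighborhood will do, whereas you spell out more carefully why the relative issues with $B_n$ disappear and add an explicit sign normalization; but the underlying argument is the same.
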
		
		
		\begin{proof}
		The integral formula is obviously independent of the choice of~$\rho_1,\ldots,\rho_{n-1},\rho_n$ and we can assume that we have~$\rho_1=\cdots=\rho_{n-1}=\rho_n=1$. We have noted in Remark~\ref{rem: Sigma torus} that the highest weight basis vector~$\psi^{(n-1)}_{n-1}$ of~$\T^{(n-1),\vee}_\B$ can be represented by the~$(n-1)$-torus~$\{|x_1|=\cdots=|x_{n-1}|=1\}$. Since this has an empty boundary we can make the computation explained in Remark~\ref{rem: tubular} with the choice of \emph{any} tubular neighborhood of~$A_n(\C)$ in~$\C^n$, for instance the one defined by~$\left|x_n-\frac{1}{x_1\cdots x_{n-1}}\right|\leq 1$, with projection map~$\rho(x_1,\ldots,x_n)=(x_1,\ldots,x_{n-1},\frac{1}{x_1\cdots x_{n-1}})$. The pullback of the~$(n-1)$-torus by the projection~$\partial\rho$ is exactly~$S^{(n)}$.
		\end{proof}
		
		The case~$n=2$ is Rhin and Viola's contour integral for~$\zeta(2)$~\cite[Lemma 2.6]{rhinviolazeta2}.
		
		\subsubsection{The case of forms with simple poles}
		
		We say that a differential form on~$X_n-A_n$ has a \emph{simple pole} along~$A_n$ if it can be written as
		$$\omega=\alpha+\mathrm{dlog}(1-x_1\cdots x_n)\wedge \beta\ ,$$
		where~$\alpha$ and~$\beta$ do not have poles along~$A_n$. The \emph{residue} of such a form along~$A_n$ is the restriction
		$$\mathrm{Res}(\omega)=\beta_{|A_n}\ .$$
		Recall that the relative cycles~$\Sigma_{k-1}^{(n-1)}$ were defined in \S\ref{par: elementary}.
		
		\begin{prop}
		Let~$\omega$ be an integrable differential form on~$X_n-A_n$ which has a simple pole along~$A_n$. Then the coefficients~$a_k(\omega)$,~$k=2,\ldots,n$, from Theorem~\ref{thm: coefficients} are given by the integral formulas
		$$a_k(\omega)=(2\pi i)^{-k+1}\,\int_{\Sigma^{(n-1)}_{k-1}}\mathrm{Res}(\omega)\ .$$
		\end{prop}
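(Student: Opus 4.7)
The plan is to combine Theorem~\ref{thm: coefficients} with a geometric description of the dual Betti basis $\varphi_k^{(n)}$ and then identify the pairing with a residue integral via the classical Poincar\'e residue formula along a tubular neighborhood of $A_n$.

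First, I would recall from the proof of Theorem~\ref{thm: period matrix Z} that $\varphi_k^{(n)} = p_\B^{(n),\vee}(\psi_{k-1}^{(n-1)})$, where $p^{(n)}\colon \Z^{(n)}\to \T^{(n-1)}(-1)$ is the Gysin connecting morphism in (\ref{eq: short exact sequence ZT}); by Remark~\ref{rem: Sigma Psi}, the class $\psi_{k-1}^{(n-1)}$ is represented by the hypersimplicial relative cycle $\Sigma^{(n-1)}_{k-1}$ on $A_n(\C)\cong T^{n-1}$. Next, following Remark~\ref{rem: tubular}, I would realize $p^{(n),\vee}_\B$ at the level of cycles as the $S^1$-tube construction: choose a tubular neighborhood $T\subset\C^n$ of $A_n(\C)$ with projection $\rho\colon T\to A_n(\C)$, such that $\rho^{-1}(B_n(\C)\cap A_n(\C)) = B_n(\C)\cap T$ at least on a dense open subset of $A_n(\C)$; then a relative cycle $\sigma$ on $(A_n(\C), A_n(\C)\cap B_n(\C))$ lifts to the relative $n$-cycle $(\partial\rho)^{-1}(\sigma)$ on $(\C^n - A_n(\C), B_n(\C))$ that represents $p^{(n),\vee}_\B([\sigma])$.

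The heart of the argument is then the Poincar\'e residue/tube formula. Writing $\omega = \alpha + \mathrm{dlog}(1-x_1\cdots x_n)\wedge\beta$ with $\alpha,\beta$ regular along $A_n$, and parametrizing the $S^1$-fiber of $\partial T\to A_n(\C)$ over a point, a Fubini argument along the fibers gives
$$\int_{(\partial\rho)^{-1}(\sigma)}\omega \;=\; 2\pi i\int_{\sigma}\beta_{|A_n} \;=\; 2\pi i\int_{\sigma}\mathrm{Res}(\omega),$$
since the contribution of $\alpha$ vanishes in the limit (it is the integral of a bounded form on a fiber of vanishing length) while the contribution of the logarithmic part collapses to $2\pi i$ times the residue. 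Applying this with $\sigma = \Sigma^{(n-1)}_{k-1}$ and combining with (\ref{eq: coefficients a sigma}) yields
$$a_k(\omega) \;=\; (2\pi i)^{-k}\langle \varphi_k^{(n)},[\omega]\rangle \;=\; (2\pi i)^{-k}\cdot 2\pi i\int_{\Sigma^{(n-1)}_{k-1}}\mathrm{Res}(\omega) \;=\; (2\pi i)^{-k+1}\int_{\Sigma^{(n-1)}_{k-1}}\mathrm{Res}(\omega).$$

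The main technical obstacle is arranging the tubular neighborhood to be compatible with $B_n$, so that the tube of a relative cycle is again a relative cycle (with boundary controlled by $B_n(\C)$, not just by $B_n(\C)\cup A_n(\C)$). This is already flagged in Remark~\ref{rem: tubular}, and it is what makes it reasonable to allow only a \emph{generic} tubular neighborhood (in the sense of being a genuine tube only over a dense open subset of $A_n(\C)$), which is harmless for the integral formula since the residue $\mathrm{Res}(\omega)$ is integrable on $\Sigma^{(n-1)}_{k-1}$. Away from this bookkeeping, the computation is forced by the compatibility of Gysin with Poincar\'e residue.
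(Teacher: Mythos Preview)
Your approach is correct in spirit but takes a longer and more delicate route than the paper. You realize $p^{(n),\vee}_\B$ geometrically via an $S^1$-tube over $\Sigma_{k-1}^{(n-1)}$ and then invoke the analytic Poincar\'e residue formula to collapse the fiber integral. As you yourself note, this requires the tubular neighborhood to be compatible with the boundary divisor $B_n$, which is precisely the issue the paper flags in Remark~\ref{rem: tubular} and explicitly declines to settle here.

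The paper's proof sidesteps this entirely by passing to the de~Rham side via adjunction. Since $\varphi_k^{(n)}=p^{(n),\vee}_\B(\psi_{k-1}^{(n-1)})$, one has
\[
a_k(\omega)=(2\pi i)^{-k}\,\langle\, p^{(n),\vee}_\B([\Sigma_{k-1}^{(n-1)}])\,,\,[\omega]\,\rangle
=(2\pi i)^{-k+1}\,\langle\,[\Sigma_{k-1}^{(n-1)}]\,,\,p^{(n)}_\dR([\omega])\,\rangle,
\]
the extra $2\pi i$ coming from the Tate twist in the target $\T^{(n-1)}(-1)$ of $p^{(n)}$. Now $p^{(n)}_\dR$ is the de~Rham realization of the Gysin connecting map, and on forms with a simple pole along $A_n$ this is nothing but the residue: $p^{(n)}_\dR([\omega])=[\mathrm{Res}(\omega)]$. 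The formula follows immediately.

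In other words, rather than building the tube on the Betti side and integrating, the paper transposes $p^{(n)}$ and uses that Gysin is residue on the de~Rham side. Your tube-and-Fubini computation is essentially an analytic reproof of this algebraic fact, at the cost of the boundary-compatibility bookkeeping that the formal argument avoids.
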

		
		\begin{proof}
		Recall from the proof of Theorem~\ref{thm: period matrix Z} that we have defined
		$$\varphi^{(n)}_k=p^{(n),\vee}_\B(\psi_{k-1}^{(n-1)})\ ,$$ 
		where~$(\psi^{(n-1)}_1,\ldots,\psi^{(n-1)}_{n-1})$ is a basis of~$\T^{(n-1),\vee}_\B$ for which the period matrix is diagonal. In the light of Remark~\ref{rem: Sigma Psi} we see that~$\psi_{k-1}^{(n-1)}$ is the class of the cycle~$\Sigma_{k-1}^{(n-1)}$, hence we get
		$$a_k(\omega)=(2\pi i)^{-k}\, \langle\, p^{(n),\vee}_\B([\Sigma_{k-1}^{(n-1)}])\, , \, [\omega] \, \rangle = (2\pi i)^{-k+1}\,\langle\,[\Sigma_{k-1}^{(n-1)}]\,,\,p^{(n)}_\dR([\omega])\,\rangle\ ,$$
		where the extra~$2\pi i$ comes from the Tate twist at the target of~$p^{(n)}$. Since~$\omega$ has a simple pole,~$p^{(n)}_\dR([\omega])$ is simply the class of~$\mathrm{Res}(\omega)$, hence the result.
		\end{proof}
		
%
%
%
%
		
	\subsubsection{Vanishing of coefficients}

		\begin{thm}\label{thm: vanishing}
		For~$\omega$ an integrable algebraic differential form on~$X_n-A_n$, we have:
		\begin{enumerate}
		\item if~$\tau.\,\omega=\omega$ then~$a_k(\omega)=0$ for~$k\neq 0$ even;
		\item if~$\tau.\,\omega=-\omega$ then~$a_k(\omega)=0$ for~$k$ odd.
		\end{enumerate}
		\end{thm}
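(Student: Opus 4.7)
My plan is to exploit the natural action of $\tau:(x_1,\ldots,x_n)\mapsto(x_1^{-1},\ldots,x_n^{-1})$ on the motive $\Z^{(n)}$ and transport it to the Betti basis dual to the Eulerian forms. First I would observe that $\tau$ is a regular automorphism of $(\C^*)^n$ that preserves $A_n$ and each hyperplane $\{x_i=1\}$, and fixes $P_n$, so it lifts to a motivic automorphism of $\Z^{(n)}$ that is compatible with the short exact sequence
$$0\to\Q(0)\to\Z^{(n)}\to\T^{(n-1)}(-1)\to 0$$
of Theorem~\ref{thm: short exact sequence ZT}, acting trivially on the subobject $\Q(0)$. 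In particular $\tau$ acts on both realizations of $\Z^{(n)}$ and the comparison pairing satisfies $\langle\tau.\varphi,\tau.v\rangle=\langle\varphi,v\rangle$.

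Next I would pin down the action of $\tau$ on the Betti dual basis $(\varphi_0^{(n)},\varphi_2^{(n)},\ldots,\varphi_n^{(n)})$ of $\Z^{(n),\vee}_\B$. The key input is Proposition~\ref{prop: computation T}(2): $\tau$ acts on the direct summand $\Q(-k)$ of $\T^{(n-1)}$ by $(-1)^k$, and hence, after Tate twist, on the summand $\Q(-k)$ of $\T^{(n-1)}(-1)$ by $(-1)^{k-1}$. By Proposition~\ref{prop: computation T}(1), $\T^{(n-1)}$ splits in $\mathsf{MT}(\Q)$, so the Betti dual $\T^{(n-1),\vee}_\B$ carries a canonical decomposition into $\tau$-eigenspaces. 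The basis vectors $\psi_{k-1}^{(n-1)}$ used to construct $\varphi_k^{(n)}=p^{(n),\vee}_\B(\psi_{k-1}^{(n-1)})$ in the proof of Theorem~\ref{thm: period matrix Z} can then be chosen to lie in these eigenspaces (the normalization fixing the period matrix is compatible with this choice), and since $p^{(n),\vee}_\B$ is $\tau$-equivariant I obtain
$$\tau.\varphi_k^{(n)}=(-1)^{k-1}\varphi_k^{(n)}\qquad(k=2,\ldots,n).$$

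With this in hand, the theorem is a one-line consequence of the $\tau$-invariance of the pairing. For an integrable $\omega$ with $\tau.\omega=\epsilon\omega$, $\epsilon=\pm 1$, formula~(\ref{eq: coefficients a sigma}) of Theorem~\ref{thm: coefficients} gives
$$a_k(\omega)=(2\pi i)^{-k}\langle\varphi_k^{(n)},[\omega]\rangle=(2\pi i)^{-k}\langle\tau.\varphi_k^{(n)},\tau.[\omega]\rangle=(-1)^{k-1}\epsilon\,a_k(\omega),$$
so $a_k(\omega)=0$ whenever $(-1)^{k-1}\epsilon=-1$. Taking $\epsilon=1$ yields (1) (vanishing for $k\neq 0$ even), and taking $\epsilon=-1$ yields (2) (vanishing for $k$ odd).

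The step I expect to require the most care is the second paragraph: verifying that $\varphi_k^{(n)}$ is literally a $\tau$-eigenvector, rather than merely an eigenvector modulo lower-weight corrections. This relies crucially on the motivic splitting of $\T^{(n-1)}$ in $\mathsf{MT}(\Q)$, which is precisely the technical content of Proposition~\ref{prop: computation T}(1) and rests on the vanishing $\mathrm{Ext}^1_{\mathsf{MT}(\Q)}(\Q(-2k),\Q(0))=0$ from~(\ref{eq: extensions MTQ}); once the splitting is in hand, the rest of the argument is purely formal.
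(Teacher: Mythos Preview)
There is a genuine gap in your first paragraph: the involution~$\tau$ does \emph{not} lift to an automorphism of~$\Z^{(n)}$. You correctly note that~$\tau$ preserves~$A_n$ and the hyperplanes~$\{x_i=1\}$, but you have forgotten the components~$\{x_i=0\}$ of~$B_n$. Since~$\tau$ sends~$x_i=0$ to~$x_i=\infty$, it does not preserve~$B_n$ (nor even~$X_n=\mathbb{A}^n$), so there is no induced action on~$\Z^{(n)}$, on~$\Z^{(n)}_\B$, or on~$\Z^{(n)}_\dR$. Consequently the expressions~$\tau.\varphi_k^{(n)}$ and~$\tau.[\omega]$ in your final displayed equation are undefined, and the identity~$\langle\tau.\varphi,\tau.v\rangle=\langle\varphi,v\rangle$ has no meaning here.

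The paper circumvents this by working entirely on the quotient~$\T^{(n-1)}(-1)$, where~$\tau$ \emph{does} act: indeed~$A_n\subset(\C^*)^n$ and~$B_n\cap A_n$ consists only of the~$\{x_i=1\}$-components, all of which are~$\tau$-stable. The argument is then purely on the de~Rham side: one pushes~$[\omega]$ forward via~$p^{(n)}_\dR$ to an element~$x\in\T^{(n-1)}_\dR$, observes that~$\tau.\omega=\pm\omega$ forces~$\tau.x=\pm x$ (because the Gysin/residue map along~$A_n$ is~$\tau$-equivariant), and then reads off the vanishing from the eigenspace decomposition of Proposition~\ref{prop: computation T}(2). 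Your Betti-side strategy can be salvaged along the same lines: rather than asserting that~$\varphi_k^{(n)}$ is a~$\tau$-eigenvector in~$\Z^{(n),\vee}_\B$, use the adjunction~$\langle\varphi_k^{(n)},[\omega]\rangle=\langle p^{(n),\vee}_\B(\psi_{k-1}^{(n-1)}),[\omega]\rangle=(2\pi i)\langle\psi_{k-1}^{(n-1)},p^{(n)}_\dR([\omega])\rangle$ and run your eigenvector argument inside~$\T^{(n-1)}$, where it is legitimate.
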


		\begin{proof}
		Let us assume that we have~$\tau.\omega=\omega$, and let us write~$x$ for the image of~$[\omega]$ in~$\T_\dR^{(n-1)}$. Then we have~$\tau.x=x$; according to Proposition~\ref{prop: computation T}, this implies that~$x$ only has components of weights~$2k$ with~$k$ even. Thus,~$[\omega]\in\Z_\dR^{(n)}$ only has components in weight~$0$ and~$2k$ with~$k$ odd, which implies that we have~$a_k(\omega)=0$ for~$k\neq 0$ even. The second case is similar.
		\end{proof}
		
		Let us write an integrable form as
		\begin{equation}\label{eq: integrable form end}
		\omega=\dfrac{P(x_1,\ldots,x_n)}{(1-x_1\cdots x_n)^N}\,dx_1\cdots dx_n~
		\end{equation}
		with~$P(x_1,\ldots,x_n)$ a polynomial with rational coefficients and~$N\geq 0$ an integer. Then we have
		\begin{equation}\label{eq: invariance equivalence}
		\tau.\omega=\pm\omega \;\;\Leftrightarrow\;\; P(x_1,\ldots,x_n)=\pm(-1)^{N+n}(x_1\cdots x_n)^{N-2}P(x_1^{-1},\ldots,x_n^{-1})\ .
		\end{equation}
		
	
	\subsection{The Ball--Rivoal integrals}
	
		We apply Theorems~\ref{thm: coefficients} and~\ref{thm: vanishing} to a special family of integrals.
	
		\begin{coro}\label{coro: ballrivoal}
		Let~$u_1,\ldots,u_n,v_1,\ldots,v_n\geq 1$ and~$N\geq 0$ be integers such that~$v_1+\cdots+v_n\geq N+1$. Then the integral
		\begin{equation}\label{eq: integrals BR}
		\int_{[0,1]^n}\dfrac{x_1^{u_1-1}\cdots x_n^{u_n-1}(1-x_1)^{v_1-1}\cdots (1-x_n)^{v_n-1}}{(1-x_1\cdots x_n)^N}\,dx_1\cdots dx_n
		\end{equation}
		is absolutely convergent and evaluates to a linear combination
		$$a_0+a_2\zeta(2)+a_3\zeta(3)+\cdots+a_n\zeta(n)$$
		with~$a_k$ a rational number for every~$k$.
		If furthermore we have~$2u_i+v_i=N+1$ for every~$i$, then we get:
		\begin{enumerate}
		\item if~$(n+1)(N+1)$ is odd then~$a_k=0$ for~$k\neq 0$ even;
		\item if~$(n+1)(N+1)$ is even then~$a_k=0$ for~$k$ odd.
		\end{enumerate}
		\end{coro}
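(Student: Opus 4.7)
The plan is to apply Theorems \ref{thm: coefficients} and \ref{thm: vanishing} directly to the integrand in (\ref{eq: integrals BR}). First, I match this form against Definition \ref{def: integrable}: it is precisely of the shape (\ref{eq: convergent form}) with $f(x_1,\ldots,x_n) = x_1^{u_1-1}\cdots x_n^{u_n-1}$, and the hypothesis $v_1+\cdots+v_n \geq N+1$ is exactly the integrability condition. Hence Proposition \ref{prop: poles} gives absolute convergence, and Theorem \ref{thm: coefficients} yields the linear combination with rational coefficients $a_k$.

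For the vanishing statement under the constraint $2u_i + v_i = N+1$, the strategy is to apply the $\tau$-invariance criterion (\ref{eq: invariance equivalence}) to the polynomial $P(x_1,\ldots,x_n) = \prod_{i=1}^n x_i^{u_i-1}(1-x_i)^{v_i-1}$. I would compute $P(x_1^{-1},\ldots,x_n^{-1})$ using the identity $(1-x_i^{-1})^{v_i-1} = (-1)^{v_i-1}x_i^{-(v_i-1)}(1-x_i)^{v_i-1}$, which gives
$$P(x_1^{-1},\ldots,x_n^{-1}) = (-1)^{\sum v_i - n}\prod_{i=1}^n x_i^{-(u_i+v_i-2)}(1-x_i)^{v_i-1}.$$
The constraint $2u_i + v_i = N+1$ rewrites as $N - u_i - v_i = u_i - 1$, and therefore $(x_1\cdots x_n)^{N-2}P(x_1^{-1},\ldots,x_n^{-1}) = (-1)^{\sum v_i - n}P(x_1,\ldots,x_n)$. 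By (\ref{eq: invariance equivalence}), this means $\tau.\omega = \varepsilon\,\omega$ with sign $\varepsilon = (-1)^{N + \sum_i v_i}$.

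It then remains to read off the parity of the exponent. Summing $2u_i + v_i = N+1$ over $i$ gives $\sum_i v_i = n(N+1) - 2\sum_i u_i$, so
$$N + \sum_i v_i = (n+1)(N+1) - 1 - 2\sum_i u_i,$$
which has parity opposite to $(n+1)(N+1)$. Thus $(n+1)(N+1)$ odd forces $\varepsilon = +1$, and Theorem \ref{thm: vanishing}(1) kills the even coefficients $a_k$ for $k\neq 0$; whereas $(n+1)(N+1)$ even forces $\varepsilon = -1$, and Theorem \ref{thm: vanishing}(2) kills the odd coefficients.

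I do not anticipate a real obstacle: the geometric and motivic content has already been absorbed into Theorems \ref{thm: coefficients} and \ref{thm: vanishing}, so the corollary is a verification that the Ball--Rivoal integrand is integrable and $\tau$-(anti-)invariant. The only care required is tracking signs in the $\tau$-action and the parity bookkeeping in the last step.
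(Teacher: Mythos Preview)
Your proposal is correct and follows essentially the same approach as the paper's proof: both verify integrability via Definition~\ref{def: integrable}, then apply the $\tau$-invariance criterion (\ref{eq: invariance equivalence}) to the polynomial $P$ and reduce the sign to the parity of $(n+1)(N+1)$ using $\sum_i v_i \equiv n(N+1)\pmod 2$. The only difference is cosmetic---the paper first writes $P(x)$ in terms of $P(x^{-1})$ and then substitutes $2u_i+v_i=N+1$, whereas you compute $P(x^{-1})$ and multiply through by $(x_1\cdots x_n)^{N-2}$; the sign bookkeeping is identical.
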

		
		\begin{proof}
		This is a direct application of Theorem~\ref{thm: vanishing}. The polynomial
		$$P(x_1,\ldots,x_n)=x_1^{u_1-1}\cdots x_n^{u_n-1}(1-x_1)^{v_1-1}\cdots (1-x_n)^{v_n-1}$$ 
		satisfies
		$$P(x_1,\ldots,x_n)=(-1)^{n+v_1+\cdots+v_n}x_1^{2u_1+v_1-3}\cdots x_n^{2u_n+v_n-3}P(x_1^{-1},\ldots,x_n^{-1})\ .$$
		Let us assume that we have~$2u_i+v_i=N+1$ for every~$i$, then~$v_1+\cdots+v_n \equiv n(N+1) \;(\mathrm{mod}\; 2)$ and we get
		$$P(x_1,\ldots,x_n)=-(-1)^{(n+1)(N+1)}(-1)^{N+n}(x_1\cdots x_n)^{N-2}P(x_1^{-1},\ldots,x_n^{-1})\ ,$$
		hence the result, in view of (\ref{eq: invariance equivalence}).
		\end{proof}
		
		Corollary~\ref{coro: ballrivoal} applies in particular to the special case
		$$N=(2r+1)m+2, \; u_i=rm+1, \; v_i=m+1$$ 
		for some integer parameters~$r,m\geq 0$ satisfying~$n(m+1)\geq (2r+1)m+3$. We then recover the integrals considered by Ball and Rivoal~\cite[Lemme 2]{ballrivoal}. The vanishing of the coefficients is~\cite[Lemme 1]{ballrivoal}. The notations~$(a,n,r)$ in~\cite{ballrivoal} correspond to our notations~$(n-1,m,r)$.\\
		
		The integrals (\ref{eq: integrals BR}) can be expressed as generalized hypergeometric series
		$$\left(\prod_{i=1}^n \dfrac{(u_i-1)!(v_i-1)!}{(u_i+v_i-1)!}\right) {}_{n+1}F_n \!\left ( \begin{matrix} {u_1,\ldots,u_n,N}\\ {u_1+v_1,\ldots,u_n+v_n}\end{matrix} \,;\, {\displaystyle 1}\right ) \hspace{5cm}$$
		\begin{equation}\label{eq: hypergeometric}
		\hspace{3cm}=\,\dfrac{\prod_{i=1}^n(v_i-1)!}{(N-1)!}\,\,\sum_{k\geq 0}\dfrac{(k)_{u_1}\cdots (k)_{u_n}(k+1)_{N-1}}{(k)_{u_1+v_1}\cdots (k)_{u_n+v_n}}\cdot
		\end{equation}
		If~$2u_i+v_i=N+1$ then the corresponding generalized hypergeometric series is said to be \emph{well-poised}.
		
	\subsection{Weight drop}
	
		In the context of Theorem~\ref{thm: coefficients}, we say that the integral~$\int_{[0,1]^n}\omega$ has \emph{weight drop} if the highest weight coefficient~$a_n(\omega)$ vanishes. This amounts to saying that the class~$[\omega]$ actually lives in the step~$W_{2(n-1)}\Z_{n,\dR}$ of the weight filtration, hence the terminology. We give a sufficient condition for this phenomenon to happen.
		
		\begin{lem}\label{lem: integration}
		Let~$u,v\geq 1$ and~$N\geq 0$ be integers such that~$u+v\leq N$. Then there exists a polynomial~$P(t)$ with rational coefficients such that
		$$\int_0^1 \dfrac{x^{u-1}(1-x)^{v-1}}{(1-tx)^N}\,dx=\dfrac{P(t)}{(1-t)^{N-v}}$$
		for every~$0\leq t<1$.
		\end{lem}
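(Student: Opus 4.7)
The plan is to reduce the statement to an elementary polynomial identity by means of a single fractional-linear change of variables, which is the integral-level avatar of Euler's transformation of ${}_2F_1$. Specifically, for $0\leq t<1$ I would perform the substitution $x=\dfrac{1-y}{1-ty}$, which is an involution of $[0,1]$ onto itself. A short direct computation yields
$$1-x=\frac{y(1-t)}{1-ty},\qquad 1-tx=\frac{1-t}{1-ty},\qquad dx=-\frac{1-t}{(1-ty)^2}\,dy,$$
and the factors of $1-ty$ telescope cleanly.

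Substituting into the integrand and collecting powers of $1-t$ and $1-ty$, one checks that
$$\int_0^1\frac{x^{u-1}(1-x)^{v-1}}{(1-tx)^N}\,dx=\frac{1}{(1-t)^{N-v}}\int_0^1(1-y)^{u-1}y^{v-1}(1-ty)^{N-u-v}\,dy.$$
The hypothesis $u+v\leq N$ now does all the work: it guarantees that the exponent $N-u-v$ is a non-negative integer, so $(1-ty)^{N-u-v}$ is a polynomial in $t$ of degree $N-u-v$ with integer coefficients.

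It then suffices to integrate this polynomial termwise: expanding $(1-ty)^{N-u-v}=\sum_{k=0}^{N-u-v}\binom{N-u-v}{k}(-t)^ky^k$ and using the Beta integrals $\int_0^1(1-y)^{u-1}y^{v+k-1}\,dy=B(u,v+k)\in\mathbb{Q}$, one obtains the desired polynomial $P(t)\in\mathbb{Q}[t]$ (of degree at most $N-u-v$). There is no real obstacle; the only point that requires a moment of care is the sign bookkeeping in the change of variables (the minus sign from $dx$ cancels the reversal of the orientation of $[0,1]$), after which the identity is purely formal and the conclusion is immediate from $N-u-v\geq 0$.
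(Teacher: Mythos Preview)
Your proof is correct and takes a genuinely different route from the paper's. The paper proceeds in two steps: first it expands $x^{u-1}(1-x)^{v-1}$ in powers of $(1-tx)$ with Laurent-polynomial coefficients, integrates term by term (each term having a denominator $(1-tx)^{N-k}$ with $N-k\geq 2$), and obtains an expression $Q(t)/(1-t)^{N-1}$ with $Q$ a rational polynomial; then it runs a separate analytic argument --- a change of variables $s=1-t$, $y=1-x$ followed by explicit elementary inequalities --- to show that $(1-t)^{N-v}$ times the integral stays bounded as $t\to 1$, whence the pole has order at most $N-v$. Your single substitution $x=(1-y)/(1-ty)$, which is precisely Euler's transformation for ${}_2F_1$ at the level of the integral representation, collapses both steps into one: the factor $(1-t)^{-(N-v)}$ falls out directly and the remaining integral $\int_0^1(1-y)^{u-1}y^{v-1}(1-ty)^{N-u-v}\,dy$ is visibly a polynomial in $t$ once $N-u-v\geq 0$. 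This is shorter, avoids any analytic estimate, and yields an explicit formula for $P(t)$ as a sum of Beta values; the paper's argument, on the other hand, does not require recognizing the Euler transformation and separates cleanly the ``rationality'' step from the ``pole order'' step, which is closer in spirit to how one might later want to sharpen the degree bound on $P$.
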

		
		\begin{proof}
		We can write 
		$$x^{u-1}(1-x)^{v-1}=\sum_{k=0}^{u+v-2}a_k(t)(1-tx)^k$$
		with~$a_k(t)$ a Laurent polynomial with rational coefficients for every~$k$. We then have
		$$\dfrac{x^{u-1}(1-x)^{v-1}}{(1-tx)^N}=\sum_{k=0}^{u+v-2}\frac{a_k(t)}{(1-tx)^{N-k}}$$
		and all the powers of~$(1-tx)$ appearing in the denominators are greater than or equal to $N-(u+v-2)\geq N-u-v+2\geq 2$. Thus, we may integrate and get
		$$\int_0^1 \dfrac{x^{u-1}(1-x)^{v-1}}{(1-tx)^N}dx=\frac{Q(t)}{(1-t)^{N-1}}$$
		with~$Q(t)$ a Laurent polynomial with rational coefficients. The left-hand side has a limit when~$t$ tends to~$0$, so~$Q(t)$ has to be a polynomial. To finish, it is enough to show that 
		$$(1-t)^{N-v}\int_0^1 \dfrac{x^{u-1}(1-x)^{v-1}}{(1-tx)^N}\,dx$$
		is bounded when~$t$ approaches~$1$. We make the change of variables~$s=1-t$,~$y=1-x$, and consider integrals
		$$s^{N-v}\int_0^1 \dfrac{(1-y)^{u-1}y^{v-1}}{(y+s-ys)^N}\,dy$$
		with~$s$ approaching~$0$. Since~$(1-y)^{u-1}\leq 1$ and~$y+s-ys\geq \frac{1}{2}(y+s)$, it is enough to prove that the quantities
		$$s^{N-v}\int_0^1 \dfrac{y^{v-1}}{(y+s)^N}\,dy$$
		are bounded when~$s$ approaches~$0$. This equals
		$$s^{N-v}\int_0^1\left(\dfrac{y}{y+s}\right)^{v-1}\dfrac{dy}{(y+s)^{N-v+1}}\leq s^{N-v}\int_0^1\dfrac{dy}{(y+s)^{N-v+1}}= \dfrac{1}{N-v}\left(1-\left(\dfrac{s}{1+s}\right)^{N-v}\right)$$
		and we are done.
		\end{proof}
		
		\begin{prop}\label{prop: weight drop}
		Let~$u_1,\ldots,u_n,v_1,\ldots,v_n\geq 1$ and~$N\geq 0$ be integers such that~$v_1+\cdots+v_n\geq N+1$. Let us assume that there exists an index~$i\in\{1,\ldots, N\}$ such that 
		$$u_i+v_i\leq N\ .$$		
		Then the integral
		$$\int_{[0,1]^n}\dfrac{x_1^{u_1-1}\cdots x_n^{u_n-1}(1-x_1)^{v_1-1}\cdots (1-x_n)^{v_n-1}}{(1-x_1\cdots x_n)^N}\,dx_1\cdots dx_n$$
		is absolutely convergent and evaluates to a linear combination
		$$a_0+a_2\zeta(2)+a_3\zeta(3)+\cdots+a_{n-1}\zeta(n-1)$$
		with~$a_i\in\Q$ for every~$i$.
		\end{prop}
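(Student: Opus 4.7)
The plan is to show directly that the highest weight coefficient $a_n(\omega)$ vanishes, which by Theorem~\ref{thm: coefficients} is equivalent to the claimed reduction of the linear combination. Since $\omega$ is integrable (by $v_1+\cdots+v_n\geq N+1$), the integral formula $a_n(\omega)=(2\pi i)^{-n}\int_{S^{(n)}}\omega$ from the preceding proposition on the highest weight coefficient applies, and I reduce the vanishing to an elementary residue computation on the innermost circle in the variable $x_n$.

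By the evident symmetry of both the integral and the hypothesis under simultaneous permutations of $(x_i,u_i,v_i)$, I may assume that the distinguished index is $i=n$, so $u_n+v_n\leq N$. Choosing radii $\rho_1,\ldots,\rho_n>0$ so that $S^{(n)}\subset\mathbb{C}^n-A_n(\mathbb{C})$, I apply Fubini and evaluate first the integral in the variable $x_n$. Since the remaining factors are independent of $x_n$ and $x_n^{u_n-1}, (1-x_n)^{v_n-1}$ are entire in $x_n$, this inner integral equals $2\pi i$ times the residue at $x_n=(x_1\cdots x_{n-1})^{-1}$ of the rational function
$$\frac{x_n^{u_n-1}(1-x_n)^{v_n-1}}{(1-x_1\cdots x_n)^N}.$$

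Writing $y=x_n-(x_1\cdots x_{n-1})^{-1}$, the denominator becomes $(-x_1\cdots x_{n-1})^N y^N$, while the numerator, expanded in powers of $y$, is a polynomial of degree $(u_n-1)+(v_n-1)=u_n+v_n-2$ in $y$, with coefficients rational in $x_1,\ldots,x_{n-1}$. The residue of $g(y)/y^N$ at $y=0$ is the coefficient of $y^{N-1}$ in $g$, which vanishes whenever $\deg g<N-1$; the hypothesis $u_n+v_n\leq N$ gives precisely $u_n+v_n-2\leq N-2<N-1$. Hence the inner integral is identically zero as a function of $(x_1,\ldots,x_{n-1})$, so $\int_{S^{(n)}}\omega=0$ and therefore $a_n(\omega)=0$.

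I anticipate no serious obstacle: the whole argument hinges on the observation that the hypothesis $u_i+v_i\leq N$ is precisely what forces the polynomial numerator in $y$ to have degree strictly below the pole order, guaranteeing the vanishing of the residue. The tempting alternative of invoking Lemma~\ref{lem: integration} to reduce the $n$-fold integral to an $(n-1)$-fold integral of an integrable form with denominator $(1-x_1\cdots x_{n-1})^{N-v_n}$ would only produce a numerical expression for $\int_{[0,1]^n}\omega$ as a $\Q$-linear combination of $1,\zeta(2),\ldots,\zeta(n-1)$; extracting the vanishing of the $\zeta(n)$-coefficient of the canonical expansion from this would require the (conjectural) $\Q$-linear independence of $1,\zeta(2),\ldots,\zeta(n)$. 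The residue approach bypasses this by working directly with the cohomological definition of $a_n(\omega)$ as the pairing against $\varphi_n^{(n)}$.
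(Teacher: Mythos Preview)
Your proof is correct, and the residue computation over $S^{(n)}$ is clean: the hypothesis $u_n+v_n\leq N$ is exactly what forces the degree of the numerator in $y=x_n-(x_1\cdots x_{n-1})^{-1}$ below the pole order, killing the residue and hence $a_n(\omega)$.

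The paper, however, takes precisely the route you dismiss in your final paragraph. It applies Lemma~\ref{lem: integration} with $x=x_n$ and $t=x_1\cdots x_{n-1}$ to rewrite the $n$-fold integral as an $(n-1)$-fold integral of an integrable form with denominator $(1-x_1\cdots x_{n-1})^{N-v_n}$ (integrability following from $v_1+\cdots+v_{n-1}\geq N-v_n+1$), and then invokes Theorem~\ref{thm: coefficients} in dimension $n-1$. Your objection to this approach is misplaced: the proposition as stated is a purely \emph{numerical} assertion about the value of the integral, and for that the Lemma~\ref{lem: integration} reduction is entirely sufficient---no linear independence of zeta values is needed. Where you do have a point is that the paper's argument does not directly establish the cohomological vanishing $a_n(\omega)=0$, which is what the surrounding prose on ``weight drop'' actually advertises; your residue argument does prove this stronger statement, and in that sense is better aligned with the spirit of the section. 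What the paper's approach buys in exchange is a self-contained computation at the level of real integrals, avoiding any appeal to the contour-integral formula for the highest weight coefficient.
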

		
		\begin{proof}
		By symmetry, we can assume that~$u_n+v_n\leq N$. Therefore, applying Lemma~\ref{lem: integration} to the variables~$x=x_n$ and~$t=x_1\cdots x_{n-1}$ in the integral leads to the~$(n-1)$-dimensional integral
		$$\int_{[0,1]^{n-1}}\dfrac{x_1^{u_1-1}\cdots x_{n-1}^{u_{n-1}-1}(1-x_1)^{v_1-1}\cdots (1-x_{n-1})^{v_{n-1}-1}P(x_1\cdots x_{n-1})}{(1-x_1\cdots x_{n-1})^{N-v_n}}\,dx_1\cdots dx_{n-1}\ .$$
		Since~$v_1+\cdots+v_{n-1}\geq N-v_n+1$, one can then finish thanks to Theorem~\ref{thm: coefficients}.
		\end{proof}
		
		Note that Proposition~\ref{prop: weight drop} applies in particular if for every~$i$,~$2u_i+v_i=N+1$. This gives in particular a geometric interpretation of the weight drop in the Ball--Rivoal integrals~\cite{rivoalcras,ballrivoal}, which comes from the representations as hypergeometric series (\ref{eq: hypergeometric}). Note that a careful analysis of the degree of the polynomial~$P(t)$ in Lemma~\ref{lem: integration} can lead to sufficient conditions for the vanishing of the subleading coefficients.

\appendix

\section{An approach via series (joint with Don Zagier)}

	The aim of this appendix is to give an elementary construction of the coefficients~$a_k(\omega)$ from Theorem~\ref{thm: coefficients}. The dictionary between integrals and sums of series leads to an interpretation of the (de Rham realization of the) zeta motive~$\mathcal{Z}$, modulo weight~$0$, in terms of rational functions in one variable.

	\subsection{Series, integrals, and zeta values}
	
		\subsubsection{Series of rational functions and zeta values}
	
			We denote by~$\Q(k)$ the field of rational functions in the variable~$k$ with rational coefficients. Let~$V$ denote the subspace of~$\mathbb{Q}(k)$ consisting of rational functions with poles in~$\{-1,-2,-3,\ldots\}$ and~$V_0$ be the subspace of functions vanishing at~$\infty$. Then~$V=V_0\oplus\mathbb{Q}[k]$ and the set of functions~$(k+j)^{-r}$, with~$j,r\geq 1$ integers, is a basis of~$V_0$. The forward difference operator~$\Delta:\mathbb{Q}(k)\rightarrow\mathbb{Q}(k)$ defined by~$\Delta R(k)=R(k+1)-R(k)$ preserves the spaces~$V$ and~$V_0$ and one has direct sum decompositions~$V=\Delta(V)\oplus B$ and~$V_0=\Delta(V_0)\oplus B$, where~$B$ is the space spanned by the functions~$(k+1)^{-r}$, for~$r\geq 1$ integers. We thus have an identification~$V_0/\Delta(V_0)\cong V/\Delta(V)$ and an isomorphism
			
			\begin{equation}\label{eq:appendixbeta}
			\beta:V/\Delta(V) \stackrel{\simeq}{\longrightarrow} \bigoplus_{r\geq 1}\mathbb{Q} \; \; , \;\; R \mapsto (\beta_1(R),\beta_2(R),\ldots)\ ,
			\end{equation}
			where the numbers~$\beta_r(R)\in\mathbb{Q}$, for~$R\in V$, are defined by 
			$$R(k) \equiv \sum_{r\geq 1} \dfrac{\beta_r(R)}{(k+1)^r} \;\; (\mathrm{mod}\;\Delta(V))\ .$$	
			
			For~$R\in V_0$ we can write
			$$R(k)=\sum_{r\geq 1} \dfrac{\beta_r(R)}{(k+1)^r} - \Delta R_0(k)\ ,$$
			for some~$R_0\in V_0$, which is unique because~$\Delta:V_0\rightarrow V_0$ is injective. Thus, the sum~$\sum_{k= 0}^\infty R(k)$ is absolutely convergent if and only if~$R\in V_0$ and~$\beta_1(R)=0$, and in this case we have 
			\begin{equation}\label{eq:appendixsumR}
			\sum_{k=0}^\infty R(k) = R_0(0) + \sum_{r\geq 2} \beta_r(R)\,\zeta(r) \;\;\;\;\; \in \; \mathbb{Q}\;+\: \sum_{r\geq 2} \mathbb{Q} \,\zeta(r)\ .
			\end{equation}
			
		\subsubsection{From differential forms to rational functions}
		
			For~$n\geq 1$ an integer, we define
			$$\Omega_n=\mathbb{Q}[x_1,\ldots,x_n,(1-x_1\cdots x_n)^{-1}]$$ 
			and we interpret an element~$F\in\Omega_n$ as the algebraic differential~$n$-form~$\omega=F\, dx_1\cdots dx_n$.
			
			\begin{lem}
			The formula
			\begin{equation}\label{eq:appendixphi}
			\Phi_n\left(\dfrac{x_1^{a_1-1}\cdots x_n^{a_n-1}}{(1-x_1\cdots x_n)^N}\right)= \left\{ \begin{array}{cl} 0 & \textnormal{if } N=0  \\ \displaystyle\binom{k+N -1}{N-1}\dfrac{1}{(k+a_1)\cdots (k+a_n)} & \textnormal{if } N\geq 1  \end{array} \right. ,
			\end{equation}
			for~$a_1,\ldots,a_n\geq 1$ and~$N\geq 0$ integers, defines a linear map~$\Phi_n:\Omega_n\rightarrow V/\Delta(V)$.
			\end{lem}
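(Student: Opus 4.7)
The plan is to exploit the natural exhaustive filtration $\Omega_n = \bigcup_{N\geq 0}\Omega_n^{(N)}$, where $\Omega_n^{(N)} = (1-x_1\cdots x_n)^{-N}\mathbb{Q}[x_1,\ldots,x_n]$. At each fixed level $N$, the family $\bigl\{\tfrac{x_1^{a_1-1}\cdots x_n^{a_n-1}}{(1-x_1\cdots x_n)^N}\bigr\}_{a_i\geq 1}$ is a genuine $\mathbb{Q}$-basis of $\Omega_n^{(N)}$ (it is obtained from the monomial basis of $\mathbb{Q}[x_1,\ldots,x_n]$ after dividing by $(1-x_1\cdots x_n)^N$). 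Consequently, the formula \eqref{eq:appendixphi} extends by $\mathbb{Q}$-linearity to a well-defined linear map $\Phi_n^{(N)}\colon \Omega_n^{(N)}\to V/\Delta(V)$, with no consistency condition to check at fixed $N$. The entire substance of the lemma is therefore the compatibility of $(\Phi_n^{(N)})_{N\geq 0}$ with the filtration inclusion $\Omega_n^{(N)}\hookrightarrow \Omega_n^{(N+1)}$ given by multiplying numerator and denominator by $1-x_1\cdots x_n$; once this is verified, $\Phi_n$ is defined as the direct limit.

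By linearity, the compatibility reduces to a single identity in $V/\Delta(V)$: for all multi-indices $\mathbf{a}=(a_1,\ldots,a_n)$ with $a_i\geq 1$ and all $N\geq 0$,
\[
\Phi_n^{(N)}\!\left(\frac{x_1^{a_1-1}\cdots x_n^{a_n-1}}{(1-x_1\cdots x_n)^N}\right)
\;\equiv\;
\Phi_n^{(N+1)}\!\left(\frac{x_1^{a_1-1}\cdots x_n^{a_n-1}}{(1-x_1\cdots x_n)^{N+1}}\right)
-\Phi_n^{(N+1)}\!\left(\frac{x_1^{a_1}\cdots x_n^{a_n}}{(1-x_1\cdots x_n)^{N+1}}\right)
\pmod{\Delta(V)}.
\]
Substituting the defining formula \eqref{eq:appendixphi} and setting $g(k)=\prod_{i=1}^n(k+a_i)^{-1}$, so that $g(k+1)=\prod_{i=1}^n(k+a_i+1)^{-1}$, Pascal's rule $\binom{k+N}{N}-\binom{k+N-1}{N-1}=\binom{k+N-1}{N}$ collapses the difference of the two sides to
\[
-\binom{k+N-1}{N}g(k)+\binom{k+N}{N}g(k+1)\;=\;\Delta\!\left(\binom{k+N-1}{N}g(k)\right),
\]
which manifestly lies in $\Delta(V)$. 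Here the case $N=0$ is handled uniformly with the convention $\binom{-1}{0}=1$; the degenerate left-hand side $\Phi_n(x_1^{a_1-1}\cdots x_n^{a_n-1})=0$ corresponds to the identity $g(k)-g(k+1)=-\Delta g(k)\in\Delta(V)$.

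The main (and essentially only) obstacle is the telescoping identity above, which is a routine discrete integration by parts against the polynomial weight $\binom{k+N-1}{N}$. There is no subtlety in selecting the correct spanning set, because the redundancy of the family in \eqref{eq:appendixphi} across different values of $N$ is generated entirely by the level-raising relation we have verified; thus compatibility at each step guarantees a globally well-defined $\Phi_n$ on the direct limit $\Omega_n$.
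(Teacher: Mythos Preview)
Your proof is correct and takes essentially the same approach as the paper: both define $\Phi_n$ level by level on the filtration $\Omega_n^{(N)}$ and verify compatibility by showing that the two representations of a basis element at levels $N$ and $N+1$ differ by an element of $\Delta(V)$. The only cosmetic difference is that the paper expresses the correction term as $\tfrac{1}{N}\bigl(kR(k)-(k+1)R(k+1)\bigr)$ with $R(k)=\binom{k+N-1}{N-1}g(k)$, whereas you use Pascal's rule to write it directly as $\Delta\bigl(\binom{k+N-1}{N}g(k)\bigr)$; these are the same function since $k\binom{k+N-1}{N-1}=N\binom{k+N-1}{N}$.
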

			
			\begin{proof}
			If we rewrite~$x_1^{a_1-1}\cdots x_n^{a_n-1}$ as~$\dfrac{x_1^{a_1-1}\cdots x_n^{a_n-1}-x_1^{a_1}\cdots x_n^{a_n}}{1-x_1\cdots x_n}$, then its image by~$\Phi_n$ is 
			\begin{eqnarray*}
			\dfrac{1}{(k+a_1)\cdots (k+a_n)}-\dfrac{1}{(k+a_1+1)\cdots (k+a_n+1)}  & = & \Delta\left(-\dfrac{1}{(k+a_1)\cdots (k+a_n)}\right) \\
			& \equiv & 0 \;\; (\mathrm{mod}\;\Delta(V))\ .
			\end{eqnarray*}
			 For~$N\geq 1$, if we rewrite~$\dfrac{x_1^{a_1-1}\cdots x_n^{a_n-1}}{(1-x_1\cdots x_n)^N}$ as~$\dfrac{x_1^{a_1-1}\cdots x_n^{a_n-1}-x_1^{a_1}\cdots x_n^{a_n}}{(1-x_1\cdots x_n)^{N+1}}$, then we replace the function~$R(k)=\displaystyle\binom{k+N -1}{N-1}\dfrac{1}{(k+a_1)\cdots (k+a_n)}$ by the function
			\begin{eqnarray*}
			R^*(k) & = & \displaystyle\binom{k+N}{N}\left(\dfrac{1}{(k+a_1)\cdots (k+a_n)}-\dfrac{1}{(k+a_1+1)\cdots (k+a_n+1)}\right)  \\
			 & = & R(k) +\dfrac{kR(k)-(k+1)R(k+1)}{N} \\
			 & \equiv & R(k) \;\; (\mathrm{mod}\;\Delta(V))\ .
			\end{eqnarray*}
			This shows that the definition of ~$\Phi_n(F)$  for~$F \in (1-x_1\cdots x_n)^{-N} \mathbb{Q}[x_1,\ldots,x_n]$ is independent of the choice of~$N$.
			\end{proof}
			
			Combining with (\ref{eq:appendixbeta}), we get well-defined maps 
			$$b_r:\Omega_n \rightarrow \mathbb{Q} \;\; , \;\; \omega \mapsto \beta_r(\Phi_n(\omega))\ .$$
			Note that this is zero for~$r>n$ for degree reasons. We denote by~$\Omega_n^{\mathrm{int}}\subset \Omega_n$ the subspace of integrable differential forms, which are the forms~$\omega$ such that the integral~$\int_{[0,1]^n}\omega$ is absolutely convergent (see Definition~\ref{def: integrable} and Propositions~\ref{prop: poles} and~\ref{prop: polesconverse}).
			
			\begin{prop}\label{prop:appendixb1}
			For every~$\omega\in\Omega_n^{\mathrm{int}}$ we have~$b_1(\omega)=0$ and
			$$\int_{[0,1]^n}\omega = b_2(\omega)\zeta(2)+\cdots+b_n(\omega)\zeta(n) \;\; (\mathrm{mod}\;\mathbb{Q})\ .$$
			\end{prop}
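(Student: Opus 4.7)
The plan is to compute $\int_{[0,1]^n}\omega$ as an explicit series of values of a natural lift $\widetilde R \in V$ of $\Phi_n(\omega)$, and then to apply the summation formula (\ref{eq:appendixsumR}) to extract the zeta coefficients.

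First, fix a presentation $\omega = F\,dx_1\cdots dx_n$ with $F = P(x)/(1-x_1\cdots x_n)^N$, where $P$ is a polynomial and $N\geq 0$. Inserting the geometric-type expansion
\[
\frac{1}{(1-x_1\cdots x_n)^N} \;=\; \sum_{k\geq 0}\binom{k+N-1}{N-1}(x_1\cdots x_n)^k
\]
valid on $[0,1)^n$, and interchanging sum and integral via Fubini--Tonelli, produces
\[
\int_{[0,1]^n}\omega \;=\; \sum_{k=0}^\infty \widetilde R(k),
\]
where $\widetilde R \in V$ is the rational function obtained by applying the defining formula (\ref{eq:appendixphi}) monomial by monomial to $F$. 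By construction $\widetilde R$ is a lift of $\Phi_n(\omega)\in V/\Delta(V)$; although this lift depends on the presentation of $F$, the proof of the lemma defining $\Phi_n$ shows that any two presentations yield lifts differing by an element of $\Delta(V_0)$, so the properties used below are independent of the choice.

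Next, I invoke the integrability of $\omega$ to conclude $b_1(\omega)=0$. By Definition~\ref{def: integrable}, one may assume $\omega$ has standard form with $v_1+\cdots+v_n\geq N+1$, so $|P|$ is bounded above by a constant times $(1-x_1)^{v_1-1}\cdots(1-x_n)^{v_n-1}$, which ensures $\int_{[0,1]^n}|F|\,dx<\infty$ and hence justifies Fubini--Tonelli and gives absolute convergence of $\sum_k\widetilde R(k)$. By the characterization stated just before (\ref{eq:appendixsumR}), absolute convergence forces $\widetilde R\in V_0$ (a nonzero polynomial component would contradict summability) and $\beta_1(\widetilde R)=0$ (no $1/(k+j)$ term can appear). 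Therefore $b_1(\omega)=\beta_1(\Phi_n(\omega))=0$.

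Finally, applying (\ref{eq:appendixsumR}) to $\widetilde R$ yields
\[
\int_{[0,1]^n}\omega \;=\; \widetilde R_0(0) \;+\; \sum_{r\geq 2}\beta_r(\widetilde R)\,\zeta(r) \;=\; \widetilde R_0(0) \;+\; \sum_{r=2}^{n}b_r(\omega)\,\zeta(r),
\]
where the truncation $r\leq n$ follows because each basic contribution to $\widetilde R$ is $\binom{k+N-1}{N-1}/\prod_i(k+a_i)$ with at most $n$ linear factors in the denominator, so its partial-fraction decomposition involves poles of order at most $n$. Since $\widetilde R_0(0)\in\mathbb{Q}$, reducing modulo $\mathbb{Q}$ gives the claimed formula. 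The main obstacle is the analytic interchange (Step 1) together with the absolute-convergence conclusion (Step 2): both hinge on controlling the $L^1$-size of $F$, which is where the integrability hypothesis on $\omega$ in the sense of Def.~\ref{def: integrable} is essential, as opposed to merely the integrability of individual monomial summands, which need not hold.
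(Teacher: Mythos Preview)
Your proof is correct and follows the same approach as the paper: expand $(1-x_1\cdots x_n)^{-N}$ as a binomial series, integrate term by term to obtain $\int_{[0,1]^n}\omega=\sum_{k\geq 0}\widetilde R(k)$ for a natural lift $\widetilde R$ of $\Phi_n(\omega)$, deduce $\widetilde R\in V_0$ and $\beta_1(\widetilde R)=0$ from convergence, and then invoke~(\ref{eq:appendixsumR}). The paper's version is terser, simply asserting that the integral equals the series and that convergence forces $R\in V_0$ and $\beta_1(R)=0$; your explicit invocation of Fubini--Tonelli via the $L^1$ bound coming from Definition~\ref{def: integrable} and Proposition~\ref{prop: poles} is a welcome justification of that step. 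One small wording point: an integrable $\omega$ is in general a \emph{linear combination} of forms of the standard shape, not a single such form, so your phrase ``one may assume $\omega$ has standard form'' should be read as ``by linearity it suffices to treat each standard summand''; the $L^1$ bound and the rest of the argument then go through unchanged.
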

			
			\begin{proof}
			Let us write~$\omega=\dfrac{P(x_1,\ldots,x_n)}{(1-x_1\cdots x_n)^N}$ with~$P(x_1,\ldots,x_n)$ a polynomial with rational coefficients and~\hbox{$N\geq 1$} an integer. Let~$R\in V$ be the representative of~$\Phi_n(\omega)$ obtained by applying (\ref{eq:appendixphi}) to every monomial in~$P(x_1,\ldots,x_n)$ and using linearity. Then the formula 
			$$\dfrac{1}{(1-x)^N}=\sum_{k=0}^\infty \binom{k+N-1}{N-1}x^k~$$
			implies that we have 
			$$\int_{[0,1]^n}\omega = \sum_{k=0}^\infty R(k)\ .$$
			Thus, the sum~$\sum_{k=0}^\infty R(k)$ is convergent, which implies that we have~$R\in V_0$ and~$\beta_1(R)=0$. The claim then follows from (\ref{eq:appendixsumR}).
			\end{proof}
			
			Proposition~\ref{prop:appendixb1} implies that there is a well-defined map~$b_0:\Omega_n^{\mathrm{int}}\rightarrow\mathbb{Q}$ such that for every~$\omega\in\Omega_n^{\mathrm{int}}$ we have
			\begin{equation}\label{eq:appendixintegralb}
			\int_{[0,1]^n}\omega = b_0(\omega)+b_2(\omega)\zeta(2)+\cdots+b_n(\omega)\zeta(n)\ .
			\end{equation}
			
			We note that applying~$\Phi_n$ to the integrals (\ref{eq: integrals BR}) leads to the hypergeometric series representations (\ref{eq: hypergeometric}).
			
		\subsubsection{Parity}
		
			Let us recall that~$\tau$ denotes the involution~$(x_1,\ldots,x_n)\mapsto (x_1^{-1},\ldots,x_n^{-1})$. The following proposition is nothing but a generalization of the classical well-poised symmetry of the hypergeometric series~(\ref{eq: hypergeometric}), and is similar to the parity considerations in~\cite[\S 8]{zudilinoddzetavalues} and~\cite[\S 3.1]{cressonfischlerrivoalseries}.
		
			\begin{prop}
			Let~$\omega\in\Omega_n$ be a differential form such that~$\tau.\omega$ belongs to~$\Omega_n$. We have, for every integer~$r\geq 1$,
			$$b_r(\tau.\omega)=(-1)^{r-1}b_r(\omega)\ .$$
			In particular, we have:
			\begin{enumerate}
			\item if~$\tau.\,\omega=\omega$ then~$b_r(\omega)=0$ for~$r\neq 0$ even;
			\item if~$\tau.\,\omega=-\omega$ then~$b_r(\omega)=0$ for~$r$ odd.
			\end{enumerate}
			\end{prop}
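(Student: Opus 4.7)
The plan is to transfer the involution $\tau$ through the map $\Phi_n$ to an explicit involution on rational functions in $k$, and then read off its effect on the coefficients $\beta_r$. First I would reduce to the case of a single monomial form. Writing $\omega\in\Omega_n$ with a common denominator as a linear combination of $\omega_{\vec{a}}=x_1^{a_1-1}\cdots x_n^{a_n-1}(1-x_1\cdots x_n)^{-N}\,dx_1\cdots dx_n$ with $a_i\ge 1$, the computations $\tau^*(dx_1\cdots dx_n)=(-1)^n(x_1\cdots x_n)^{-2}\,dx_1\cdots dx_n$ and $(1-(x_1\cdots x_n)^{-1})^N=(-1)^N(x_1\cdots x_n)^{-N}(1-x_1\cdots x_n)^N$ yield
$$\tau.\omega_{\vec{a}}=(-1)^{N+n}\,\frac{x_1^{N-a_1-1}\cdots x_n^{N-a_n-1}}{(1-x_1\cdots x_n)^N}\,dx_1\cdots dx_n.$$
The assumption $\tau.\omega\in\Omega_n$ then forces, after enlarging $N$ if necessary (using the freedom to multiply numerator and denominator by powers of $1-x_1\cdots x_n$), that every monomial appearing in $\omega$ satisfies $1\le a_i\le N-1$. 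By linearity of $b_r$ it suffices to treat a single such monomial.

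Applying the defining formula \eqref{eq:appendixphi} and using the identity $\binom{-k-1}{N-1}=(-1)^{N-1}\binom{k+N-1}{N-1}$, I would verify the clean symmetry
$$\Phi_n(\tau.\omega_{\vec{a}})(k)=-R_{\vec{a}}(-k-N),\qquad R_{\vec{a}}(k):=\Phi_n(\omega_{\vec{a}})=\binom{k+N-1}{N-1}\frac{1}{(k+a_1)\cdots(k+a_n)}.$$
Because $1\le a_i\le N-1$, all poles of $R_{\vec{a}}$ lie in $\{-1,\ldots,-(N-1)\}$, so the partial fraction expansion takes the form $R_{\vec{a}}(k)=P(k)+\sum_{j=1}^{N-1}\sum_{r\ge 1}c_{j,r}/(k+j)^r$ with $P\in\mathbb{Q}[k]$. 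Since $\Delta\colon\mathbb{Q}[k]\to\mathbb{Q}[k]$ is surjective the polynomial part lies in $\Delta(V)$, and $(k+j)^{-r}\equiv (k+1)^{-r}\pmod{\Delta(V)}$ for each $j\ge 1$, so $\beta_r(R_{\vec{a}})=\sum_{j=1}^{N-1}c_{j,r}$. Substituting $k\mapsto -k-N$ in the partial fractions and relabeling via the involution $j\mapsto N-j$ of $\{1,\ldots,N-1\}$ gives
$$-R_{\vec{a}}(-k-N)\equiv\sum_{j=1}^{N-1}\sum_{r\ge 1}\frac{(-1)^{r-1}c_{N-j,r}}{(k+j)^r}\pmod{\Delta(V)},$$
whence $\beta_r\bigl(-R_{\vec{a}}(-k-N)\bigr)=(-1)^{r-1}\beta_r(R_{\vec{a}})$. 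Summing over $\vec{a}$ gives $b_r(\tau.\omega)=(-1)^{r-1}b_r(\omega)$.

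The two vanishing statements are then immediate: writing $\tau.\omega=\varepsilon\omega$ with $\varepsilon\in\{\pm 1\}$, the relation $b_r(\omega)=\varepsilon(-1)^{r-1}b_r(\omega)$ forces $b_r(\omega)=0$ whenever $\varepsilon(-1)^{r-1}=-1$, which is exactly $r$ even for $\varepsilon=+1$ and $r$ odd for $\varepsilon=-1$. The main obstacle I expect is the bookkeeping in the reduction step: one must commit to a representation of $\omega$ with $N$ large enough that the substitution $k\mapsto -k-N$ keeps $\Phi_n(\omega)$ inside $V$. Once this is arranged, the remaining manipulations are a straightforward algebraic exercise in partial fractions modulo $\Delta(V)$.
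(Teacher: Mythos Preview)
Your proof is correct and follows essentially the same route as the paper's: both establish the key identity $S(k)=-R(-N-k)$ between representatives of $\Phi_n(\tau.\omega)$ and $\Phi_n(\omega)$ and then read off the sign $(-1)^{r-1}$ on $\beta_r$; you simply spell out the partial-fraction verification that the paper leaves implicit. Your caveat about enlarging $N$ is in fact unnecessary, since each $x_i$ is a non-unit in $\Omega_n$, so the hypothesis $\tau.\omega\in\Omega_n$ already forces every monomial in any representation $P/(1-x_1\cdots x_n)^N$ to satisfy $a_i\le N-1$.
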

			
			\begin{proof}
			Let~$R$ and~$S$ be representatives of~$\Phi_n(\omega)$ and~$\Phi_n(\tau.\omega)$ respectively, constructed as in the proof of Proposition~\ref{prop:appendixb1}. The involution~$\tau$ acts on differential forms by the formula 
			$$\dfrac{x_1^{a_1-1}\cdots x_n^{a_n-1}}{(1-x_1\cdots x_n)^N}\, dx_1\cdots dx_n \;\; \mapsto \;\; (-1)^{N+n}\,\dfrac{x_1^{N-a_1-1}\cdots x_n^{N-a_n-1}}{(1-x_1\cdots x_n)^N}\, dx_1\cdots dx_n\ .$$ 
			Thus, by looking at the formula for~$\Phi_n$, we see that we have~$S(k)=-R(-N-k)$. This implies, for every integer~$r\geq 1$, the equality,
			$$\beta_r(S)=(-1)^{r-1}\beta_r(R)\ ,$$
			and the claim follows.
			\end{proof}

	\subsection{Comparison of the coefficients}

			The aim of this section is to prove the following theorem.	
	
			\begin{thm}\label{thmappendix}
			For every~$\omega\in\Omega_n^{\mathrm{int}}$ and every integer~$r=0,2,\ldots,n$ we have~$a_r(\omega)=b_r(\omega)$.
			\end{thm}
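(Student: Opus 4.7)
The strategy will be to view both $a_r$ and $b_r$ as $\mathbb{Q}$-linear functionals on $\Omega_n^{\mathrm{int}}$ and show that they factor through the surjection $\Omega_n^{\mathrm{int}} \twoheadrightarrow \mathcal{Z}^{(n)}_{\dR}$ as the same functional on $\mathcal{Z}^{(n)}_{\dR}$. By Theorem~\ref{thm: period matrix Z}, the Eulerian classes $v_k^{(n)} = [\omega_k^{(n)}]$ for $k \in \{0,2,\ldots,n\}$ form a basis of $\mathcal{Z}^{(n)}_{\dR}$, and the $a_r$ are by construction the dual coordinate functionals, so $a_r(\omega_k^{(n)}) = \delta_{r,k}$. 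The theorem therefore reduces to two sub-claims: \textbf{(A)} that $b_r$ descends to a well-defined functional on $\mathcal{Z}^{(n)}_{\dR}$, i.e.\ $b_r(\omega) = 0$ whenever $[\omega] = 0 \in \mathcal{Z}^{(n)}_{\dR}$; and \textbf{(B)} that $b_r(\omega_k^{(n)}) = \delta_{r,k}$ for every admissible pair $(r,k)$. Granted these, decomposing any $\omega \in \Omega_n^{\mathrm{int}}$ as $\omega = \sum_k a_k(\omega)\,\omega_k^{(n)} + \omega'$ with $[\omega'] = 0$ yields $b_r(\omega) = \sum_k a_k(\omega)\,\delta_{r,k} + 0 = a_r(\omega)$.

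The core technical tool for both claims will be a Stokes-type identity at the level of rational functions: for every $h \in \Omega_n$ with $h|_{x_n = 0} = 0$, the plan is to prove
$$
\Phi_n(\partial_{x_n} h \cdot dx_1 \cdots dx_n) \; \equiv \; \Phi_{n-1}(h|_{x_n=1} \cdot dx_1 \cdots dx_{n-1}) \pmod{\Delta(V)}.
$$
This is the rational-function counterpart of the classical equality $\int_{[0,1]^n} \partial_{x_n} h \cdot dx_1 \cdots dx_n = \int_{[0,1]^{n-1}} h|_{x_n=1} \cdot dx_1 \cdots dx_{n-1}$ (valid when $h$ vanishes on $\{x_n=0\}$), and it will be established by a direct computation on monomial representatives $h = x_n^{a_n} x_1^{a_1-1} \cdots x_{n-1}^{a_{n-1}-1}/(1 - x_1 \cdots x_n)^N$ using the explicit formula~(\ref{eq:appendixphi}) for $\Phi_n$, together with the elementary congruence $\frac{1}{(m+\ell)^s} \equiv \frac{1}{(m+1)^s} \pmod{\Delta(V)}$.

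Granting this lemma, claim (B) will be handled by induction on $n \geq k$. The base $n = k$ is immediate: $\Phi_k(\omega_k^{(k)}) = \frac{1}{(m+1)^k}$ directly from~(\ref{eq:appendixphi}), giving $b_r(\omega_k^{(k)}) = \delta_{r,k}$. For the inductive step, I will use the identity exhibited in the proof of Proposition~\ref{propdRrecurrence}: $\omega_k^{(n)} = \partial_{x_n} F \cdot dx_1 \cdots dx_n$ with $F = \frac{x_n E_{n-1-k}(x_1 \cdots x_n)}{(1 - x_1 \cdots x_n)^{n-k}}$, which satisfies $F|_{x_n = 0} = 0$ and $F|_{x_n = 1} \cdot dx_1 \cdots dx_{n-1} = \omega_k^{(n-1)}$. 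The Stokes-type lemma then yields $\Phi_n(\omega_k^{(n)}) \equiv \Phi_{n-1}(\omega_k^{(n-1)}) \equiv \frac{1}{(m+1)^k} \pmod{\Delta(V)}$, closing the induction. The case $k = 0$ is separate but immediate from $\Phi_n(dx_1 \cdots dx_n) = 0$ and $\int_{[0,1]^n} dx_1 \cdots dx_n = 1$.

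The main obstacle will be claim (A). My plan for it is to extend the Stokes-type lemma to partial derivatives with respect to each variable $x_i$ (the statement is symmetric in the variables, so this should follow by relabelling), and then to exploit the description of $\mathcal{Z}^{(n)}_{\dR}$ as the hypercohomology of the double complex~(\ref{eq: double complex de Rham}): a form with vanishing class admits a presentation as a boundary combining exact pieces on $Y$ and restrictions along the irreducible components of $\partial Y$, and iterated applications of the Stokes-type identity should force its image under $\Phi_n$ into $\Delta(V)$. Tracking carefully the boundary contributions along the faces of $B_n$ and the exceptional divisor $E_n$ is the main technical difficulty of the proof.
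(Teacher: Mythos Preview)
Your Stokes-type lemma is exactly the paper's Lemma~\ref{lem:appendixPhi}, and your treatment of claim~(B) via induction and Proposition~\ref{propdRrecurrence} mirrors the paper's computation. So the technical engine is the same.

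The organization, however, diverges at claim~(A), and this is where your plan is underdeveloped. You propose to show that $b_r$ kills the kernel of $\Omega_n^{\mathrm{int}}\twoheadrightarrow\mathcal{Z}^{(n)}_{\dR}$ by unwinding the hypercohomology presentation~(\ref{eq: double complex de Rham}) on $\td{X}_n$. Two concrete obstacles: first, a primitive $\alpha$ with $d\alpha=\pi_n^*\omega$ lives on the blow-up and need not be (the pullback of) an element of $\Omega_n$, so your Stokes identity for $\Phi_n$ does not apply to it directly; second, even if you produce $\alpha\in\Omega_n^{n-1}$ with $d\alpha=\omega$, the restrictions $\alpha_j|_{x_j=1}$ need not be \emph{integrable}, so you cannot invoke an induction hypothesis of the form ``(A) holds at level $n-1$'' without further work. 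Your sketch (``iterated applications of the Stokes-type identity should force\ldots'') does not address either point.

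The paper sidesteps (A) entirely. Instead of characterising the kernel, it uses the short exact sequence~(\ref{eq: short exact sequence Z}) (equivalently, the fact that $H^n_{\dR}(X_n-A_n)\cong\Q(-n)$ is one-dimensional) to write any $\omega\in\Omega_n^{\mathrm{int}}$ \emph{as a form} as
\[
\omega \;=\; a_n(\omega)\,\omega_n^{(n)} \;+\; \sum_{j=1}^n d\eta_j
\]
with each $\eta_j$ integrable of type~(\ref{eq:appendixetatype}). Applying $\Phi_n$ and Lemma~\ref{lem:appendixPhi} then reduces to $\Phi_{n-1}$ of the boundary restrictions $(\eta_j)|_{x_j=1}$, to which the full induction hypothesis (Theorem at level $n-1$, not just (A)) applies, via~(\ref{eq:appendixiota}) and the compatibility of $i^{(n),j}_{\dR}$ with the Eulerian bases. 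This packages (A) and (B) into a single induction and never touches the hypercohomology complex on the blow-up. Your route can be made to work, but it would converge to this same induction once you confront the two obstacles above; the paper's decomposition is the cleaner entry point.
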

			
			Note that this theorem would follow from the conjecture that~$1$ and the zeta values~$\zeta(n),\, n\geq 2$, are linearly independent over~$\mathbb{Q}$, by looking at equations (\ref{eq: integral coefficients a}) and (\ref{eq:appendixintegralb}).
			
		\subsubsection{Inductive structure on the motives~$\Z^{(n)}$}
		
			Let us recall from \S\ref{par: inductive} the morphisms~$i^{(n)}_\dR:\Z^{(n-1)}_\dR\rightarrow \Z^{(n)}_\dR$, which come from the identification~$X_{n-1}=\{x_n=1\}\subset X_n$. Let us consider an~$(n-1)$-form of the type
			$$\eta=\dfrac{P(x_1,\ldots,x_n)}{(1-x_1\cdots x_n)^N}\, dx_1\cdots dx_{n-1}\ ,$$
			with~$P(x_1,\ldots,x_n)$ a polynomial with rational coefficients and~$N\geq 0$ an integer. We say that such a form is integrable if the pullback~$\pi_n^*(\eta)$ does not have a pole along the exceptional divisor~$E_n$ ($\pi_n$ and~$E_n$ are introduced in \S\ref{par: def Z}). This can be characterized in the same way as in Propositions~\ref{prop: poles} and~\ref{prop: polesconverse}, but we will not need such a characterization. If~$\eta$ is integrable then its derivative~$d\eta$ is integrable in the sense of Definition~\ref{def: integrable}, and the restriction~$\eta_{|x_n=1}$, viewed as a form on~$X_{n-1}$, is also integrable. We then have classes~$[d\eta]\in\Z^{(n)}_\dR$ and~$[\eta_{|x_n=1}]\in\Z^{(n-1)}_\dR$. They are related by the formula
			$$i_{n,\dR}([\eta_{|x_n=1}])\equiv (-1)^{n-1}[d\eta] \;\;(\mathrm{mod}\;W_0\Z^{(n)}_\dR)\ ,$$
			which is proved as in the proof of Proposition~\ref{propdRrecurrence}, by noticing that~$\eta_{|x_n=0}$ is a polynomial, hence has weight zero. This formula is the de Rham-theoretic incarnation of Stokes's formula
			$$(-1)^{n-1}\int_{[0,1]^n}d\eta = \left( \int_{[0,1]^{n-1}}\eta_{|x_n=1} - \int_{[0,1]^{n-1}}\eta_{|x_n=0} \right) \equiv \int_{[0,1]^{n-1}}\eta_{|x_n=1} \;\;(\mathrm{mod}\;\mathbb{Q})\ .$$
			
			If we now choose to make the identification~$X_{n-1}=\{x_j=1\}\subset X_n$, for some index~$j=1,\ldots,n$, then we get a morphism~$i^{(n),j}_\dR:\Z^{(n-1)}_\dR\rightarrow \Z^{(n)}_\dR$, such that~$i^{(n)}_\dR=i^{(n),n}_\dR$. They satisfy the equation
			\begin{equation}\label{eq:appendixiota}
			i^{(n),j}_\dR([\eta_{|x_j=1}])\equiv (-1)^{j-1}[d\eta] \;\;(\mathrm{mod}\;W_0\Z^{(n)}_\dR)\ ,
			\end{equation}
			for~$\eta$ an integrable~$(n-1)$-form of the type
			\begin{equation}\label{eq:appendixetatype}
			\dfrac{P(x_1,\ldots,x_n)}{(1-x_1\cdots x_n)^N}\, dx_1\cdots \widehat{dx_j}\cdots dx_n\ .
			\end{equation}
			
			One easily notes that the morphism~$i^{(n),j}_\dR$ does not depend on the index~$j$, for instance by proving that Proposition~\ref{propdRrecurrence} is valid for any choice of~$j$: for every~$d=0,2,\ldots,n-1$, the map~$i^{(n),j}_\dR$ sends the class~$[\omega^{(n-1)}_d]$ to the class~$[\omega_d^{(n)}]$. We nevertheless keep the notation~$i^{(n),j}_\dR$ since these morphisms have different geometric interpretations.
		
		\subsubsection{Compatibility of~$\Phi_n$ with the induction}
		
			The crucial point is that the morphisms~$\Phi_n$ are compatible with the inductive structure (\ref{eq:appendixiota}) on the motives~$\Z^{(n)}_\dR$, in the sense of the following lemma.		
		
			\begin{lem}\label{lem:appendixPhi}
			For every~$j=1,\ldots,n$ and every differential~$(n-1)$-form~$\eta$ of type (\ref{eq:appendixetatype}) we have
			$$\Phi_n(d\eta)\equiv (-1)^{j-1}\Phi_{n-1}(\eta_{|x_j=1}) \;\;(\mathrm{mod}\;\Delta(V))\ .$$
			\end{lem}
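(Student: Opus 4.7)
My plan is to reduce to the case of a single monomial~$\eta$, apply the definition~\eqref{eq:appendixphi} of~$\Phi_n$ to each resulting term, and verify the leftover algebraic congruence on rational functions in~$k$ by writing it as~$\Delta L$ for an explicit~$L\in V$. By $\mathbb{Q}$-linearity it is enough to take
$$\eta=\frac{x_1^{a_1-1}\cdots x_n^{a_n-1}}{(1-x_1\cdots x_n)^N}\,dx_1\cdots\widehat{dx_j}\cdots dx_n$$
with integers~$a_i\geq 1$ and~$N\geq 0$. Direct differentiation will yield
$$d\eta=(-1)^{j-1}\!\left[\frac{(a_j-1)x_j^{a_j-2}\prod_{i\neq j}x_i^{a_i-1}}{(1-x_1\cdots x_n)^N}+\frac{N\,x_j^{a_j-1}\prod_{i\neq j}x_i^{a_i}}{(1-x_1\cdots x_n)^{N+1}}\right]dx_1\cdots dx_n,$$
while~$\eta_{|x_j=1}$ is the analogous monomial in~$n-1$ variables with the same~$N$. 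If~$N=0$ both~$d\eta$ and~$\eta_{|x_j=1}$ are polynomial, and the rewriting trick already used in the proof of the lemma defining~$\Phi_n$ shows that~$\Phi$ sends polynomials to~$0$ modulo~$\Delta(V)$, settling this case.

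For~$N\geq 1$, applying the definition of~$\Phi_n$ termwise---noting that the formally problematic exponent~$a_j-2=-1$ at~$a_j=1$ is harmless since the coefficient~$(a_j-1)$ then vanishes---will reduce the lemma to the combinatorial congruence
$$(a_j-1)\binom{k+N-1}{N-1}\frac{1}{(k+a_j-1)\prod_{i\neq j}(k+a_i)}+N\binom{k+N}{N}\frac{1}{(k+a_j)\prod_{i\neq j}(k+a_i+1)}\equiv \binom{k+N-1}{N-1}\frac{1}{\prod_{i\neq j}(k+a_i)}\pmod{\Delta(V)}.$$
Writing~$a_j-1=(k+a_j-1)-k$ will extract the right-hand side from the first term on the left, and the elementary identity~$N\binom{k+N}{N}=(k+N)\binom{k+N-1}{N-1}$ will put the second term into parallel form. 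The remaining content of the congruence will be
$$\binom{k+N-1}{N-1}\!\left[\frac{k+N}{(k+a_j)\prod_{i\neq j}(k+a_i+1)}-\frac{k}{(k+a_j-1)\prod_{i\neq j}(k+a_i)}\right]\in\Delta(V).$$

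The core step will be to recognize this bracket as~$-\Delta L$ for
$$L(k)=-\binom{k+N-1}{N-1}\,\frac{k}{(k+a_j-1)\prod_{i\neq j}(k+a_i)}\,,$$
which follows from the Pascal-type identity~$\binom{k+N}{N-1}=\frac{k+N}{k+1}\binom{k+N-1}{N-1}$ once the~$k+1$ in the numerator of~$L(k+1)$ cancels. The only check is that~$L$ lies in~$V$: for~$a_j\geq 2$ the pole at~$k=-(a_j-1)$ is in~$\{-1,-2,\ldots\}$, and for~$a_j=1$ the factor~$k/(k+a_j-1)$ simplifies to~$1$, so the only poles of~$L$ are at the~$k=-a_i$. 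The one substantive step is guessing~$L$; this is found by pattern-matching the expression to be telescoped, and I do not anticipate any genuine obstacle beyond the bookkeeping.
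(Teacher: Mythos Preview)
Your proposal is correct and follows essentially the same route as the paper: reduce to a monomial, differentiate, apply the definition of~$\Phi_n$, split off the target term via~$a_j-1=(k+a_j-1)-k$, and recognize the remainder as a telescoping sum. The paper works with~$j=n$ and parametrizes the~$x_n$-exponent as~$a_n\geq 0$ rather than~$a_j-1$ with~$a_j\geq 1$, uses the equivalent identity~$N\binom{k+N}{N}=(k+1)\binom{k+N}{N-1}$ instead of your~$(k+N)\binom{k+N-1}{N-1}$, and leaves the telescoping function implicit; your explicit~$L$ is exactly (minus) that function, so the arguments are the same up to bookkeeping.
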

			
			\begin{proof}
			We do the case~$j=n$, the general case being similar. It is enough to do the proof for a monomial
			$$\eta=\dfrac{x_1^{a_1-1}\cdots x_{n-1}^{a_{n-1}-1}x_n^{a_n}}{(1-x_1\cdots x_n)^N}\, dx_1\cdots dx_{n-1}\ ,$$
			with~$a_1,\ldots,a_{n-1}\geq 1$,~$a_n\geq 0$ and~$N\geq 1$. We have 
			$$(-1)^{n-1}d\eta=\left(a_n\dfrac{x_1^{a_1-1}\cdots x_n^{a_n-1}}{(1-x_1\cdots x_n)^N}+N\dfrac{x_1^{a_1}\cdots x_n^{a_n}}{(1-x_1\cdots x_n)^{N+1}}\right) dx_1\cdots dx_n\ ,$$
			and thus $(-1)^{n-1}\Phi_n(d\eta)$ equals
			$$a_n\binom{k+N-1}{N-1}\dfrac{1}{(k+a_1)\cdots (k+a_n)}+N\binom{k+N}{N}\dfrac{1}{(k+a_1+1)\cdots (k+a_n+1)}\cdot$$
			By writing~$\frac{a_n}{k+a_n}=1-\frac{k}{k+a_n}$ and~$N\binom{k+N}{N}=(k+1)\binom{k+N}{N-1}$, we get
			$$(-1)^{n-1}\Phi_n(d\eta)\equiv \binom{k+N-1}{N-1}\dfrac{1}{(k+a_1)\cdots (k+a_{n-1})} = \Phi_{n-1}(\eta_{|x_n=1}) \;\;(\mathrm{mod}\;\Delta(V))\ .$$
			\end{proof}
		
		\subsubsection{Proof of Theorem~\ref{thmappendix}}
		
			We prove Theorem~\ref{thmappendix} by induction on~$n$. The case~$n=1$ is trivial since in this case we have~$a_0(\omega)=b_0(\omega)=\int_0^1\omega$. Let us then assume that~$n\geq 2$ and that the theorem is proved for~$n-1$. Recall the notation 
			$$\omega_n^{(n)}=\dfrac{dx_1\cdots dx_n}{1-x_1\cdots x_n}$$
			for the representative of the highest weight basis element in~$\Z_{n,\dR}$; it satisfies~$\Phi_n(\omega_n^{(n)})=(k+1)^{-n}$. The short exact sequence (\ref{eq: short exact sequence Z}) implies that, for every~$\omega\in\Omega^{\mathrm{int}}_n$, we may write
			$$\omega=a_n(\omega)\omega_n^{(n)}+\sum_{j=1}^n d\eta_j$$
			with~$\eta_j$ an integrable~$(n-1)$-form of type (\ref{eq:appendixetatype}), for every~$j=1,\ldots,n$. The short exact sequence (\ref{eq: short exact sequence Z}) actually implies that in addition we can assume that the classes of~$d\eta_1,\ldots,d\eta_{n-1}$ are zero, but we will not need it here. By using (\ref{eq:appendixiota}) we may write
			$$[\omega]=a_n(\omega)[\omega_n^{(n)}] + \sum_{j=1}^n (-1)^{j-1}i^{(n),j}_\dR([(\eta_j)_{|x_j=1}]) \; \; (\mathrm{mod}\; W_0\Z^{(n)}_\dR)\ .$$
			Now Lemma~\ref{lem:appendixPhi} implies the formula
			$$\Phi_n(\omega)\equiv\dfrac{a_n(\omega)}{(k+1)^n} + \sum_{j=1}^n (-1)^{j-1}\Phi_{n-1}((\eta_j)_{|x_j=1}) \; \; (\mathrm{mod}\; \Delta(V)) \ .$$
			
			By using the induction hypothesis on the forms~$(\eta_j)_{|x_j=1}$ and the fact that the morphisms~$i^{(n),j}_\dR$ are compatible with the bases, this implies that we have
			$$\Phi_n(\omega) \equiv \sum_{r=2}^n \dfrac{a_r(\omega)}{(k+1)^r} \; \; (\mathrm{mod}\; \Delta(V)) \ ,$$
			which completes the proof. \\
		
			We note that a restatement of Theorem~\ref{thmappendix} is that the morphisms~$\Phi_n$ induce an isomorphism of graded vector spaces
			$$\Phi: \Z_\dR/W_0\Z_\dR \stackrel{\simeq}{\longrightarrow} (V/\Delta(V))_{\geq 2}\ ,$$
			where~$(V/\Delta(V))_{\geq 2}$ is the subspace of~$V/\Delta(V)$ characterized by the condition~$\beta_1=0$ and is graded by the morphisms~$\beta_n$,~$n\geq 2$.

\bibliographystyle{alpha}
\bibliography{biblio}

\end{document}